\documentclass[10pt]{amsart}

\usepackage{amsfonts, amssymb}


\newtheorem{theorem}{Theorem}[section]

\newtheorem{corollary}[theorem]{Corollary}
\newtheorem{lemma}[theorem]{Lemma}
\newtheorem{proposition}[theorem]{Proposition}

\theoremstyle{definition}
\newtheorem{definition}[theorem]{Definition}
\theoremstyle{remark}
\newtheorem{remark}[theorem]{Remark}

\numberwithin{equation}{section}


\newcommand{\CC}{\mathbb C}
\newcommand{\RR}{\mathbb R}

\newcommand{\aut}{\mathrm{Aut\,}}
\newcommand{\dist}{\mathrm{ dist\, }}
\newcommand{\supp}{\mathrm{ supp\, }}

\def\NN{{\mathbb N}}

\def\PP{{\mathbb P}}

\def\cC{{\mathcal C}}
\def\cD{{\mathcal D}}

\def\cH{{\mathcal H}}

\def\cL{{\mathcal L}}

\def\cU{{\mathcal U}}
\def\cV{{\mathcal V}}

\def\dist{\mathrm{dist}\,}

\def\dsh{\mathrm{DSH}}
\def\var{\mathrm{var}}

\def\id{\mathrm{Id}}

\begin{document}

\title[Equidistribution of positive closed $(p,p)$-currents on $\PP^k$]{Local regularity of super-potentials and equidistribution of positive closed currents on $\PP^k$}%

\author{Taeyong Ahn}%
\address{(Ahn) KIAS, 85 Hoegiro, Dongdaemun-gu, Seoul 130-722, Republic of Korea
}%
%
\email{(Ahn) tahn@kias.re.kr}
\date{\today}

\begin{abstract}
In this paper, we introduce and study natural notions of local continuity/boundedness of super-potentials on $\PP^k$. Next, we prove an equidistribution theorem of positive closed $(p, p)$-currents of $\PP^k$, whose super-potentials are continuous/bounded near an invariant analytic subset, for holomorphic endomorphisms of $\PP^k$. We also consider the case of regular polynomial automorphisms of $\CC^k$.\\

%
\end{abstract}
\maketitle

\section{Introduction}
Let $f$ be a holomorphic endomorphism of $\PP^k$ of algebraic degree $d \geq 2$ and $\omega$ the Fubini-Study form of $\PP^k$ such that $\int_{\PP^k}\omega^k=1$. It is well-known that $d^{-n}(f^n)^*\omega$ converges to a positive closed $(1, 1)$-current $T$ of unit mass on $\PP^k$ in the sense of currents. We call $T$ the Green current associated with $f$. The support of $T$ is the Julia set of $f$ and $T$ contains much information on the dynamics of $f$. For background, we refer the reader to \cite{sibony}.
\smallskip

The following equidistribution theorem of positive closed $(p, p)$-currents was proved in \cite{Ahn2015} and it extends a main result in \cite{DS2009} (in particular, we are interested in the case of $1<p<k$):
\begin{theorem}[See Theorem 1.3 in \cite{Ahn2015}]\label{thm:2015}
Let $f$ and $T$ be as above. Then, there exists a proper (possibly empty) invariant analytic subset $E$ for $f$ such that if $S\in\cC_p$ is smooth near $E$, then $d^{-pn}(f^n)^*S$ converges to $T^p$ exponentially fast in the sense of currents where $\cC_p$ is the set of positive closed $(p, p)$-currents of unit mass on $\PP^k$.
\end{theorem}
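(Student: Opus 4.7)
The strategy is to work in the super-potential calculus of Dinh--Sibony. To any positive closed $(p,p)$-current $R$ of unit mass (with bounded super-potential) one associates a super-potential $\cU_R$, viewed as a function on the space $\cC_{k-p+1}$ of test currents. The pullback dynamics satisfies a duality of the form
\[
\cU_{d^{-p}f^{*}R}(Q) \;=\; \cU_R(\Lambda Q) \;+\; c,
\]
where $\Lambda$ is the mass-preserving linear operator on $\cC_{k-p+1}$ built from $f_{*}$, and $c$ depends only on the mass of $R$ and on $f$. Since $S$ and $T^p$ have the same mass, iterating and subtracting gives
\[
\cU_{S_n}(Q) - \cU_{T^p}(Q) \;=\; (\cU_S - \cU_{T^p})(\Lambda^{n} Q), \qquad S_n := d^{-pn}(f^n)^{*}S.
\]
The problem thus reduces to showing exponential decay of this pairing uniformly in $Q\in\cC_{k-p+1}$.

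The next step is to identify the exceptional set $E$. Following the inductive scheme of \cite{DS2009}, $E$ should be the smallest totally invariant analytic subset outside of which $\Lambda$ exhibits a Perron--Frobenius-type spectral gap: for every test current $Q$ whose super-potential is continuous near $E$, the iterates $\Lambda^{n} Q$ converge to $T^{k-p+1}$ exponentially fast. Concretely, one collects dimension by dimension the orbits of algebraic subvarieties that carry anomalous Green mass under some $(f_{*})^{n}$; the invariance $f(E)=E$ is forced by the construction. The assumption that $S$ be smooth near $E$ should then translate, via the local regularity theory for super-potentials introduced earlier in the paper, into local Hölder continuity of $\cU_S - \cU_{T^p}$ restricted to the slice of $\cC_{k-p+1}$ of currents supported near $E$. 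Splitting the mass of $\Lambda^{n}Q$ into a piece near $E$ and a piece away from $E$, the near-$E$ contribution is controlled by local Hölder continuity together with the spectral gap, while the away-from-$E$ mass is itself exponentially small by the defining property of $E$; summing yields $\cU_{S_n}\to \cU_{T^p}$ exponentially, hence $S_n\to T^p$ in the sense of currents.

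The main obstacle is reconciling these two regularity notions quantitatively. One must identify a topology on $\cC_{k-p+1}$ in which $\Lambda^{n}$ contracts exponentially on test currents concentrated near $E$ and, simultaneously, in which smoothness of $S$ near $E$ forces a matching local Hölder bound on $\cU_S - \cU_{T^p}$. Formulating the correct local continuity/boundedness notions for super-potentials so that these two estimates are compatible -- and then actually proving each of them -- is precisely what the paper's local regularity theory is designed to deliver, and it constitutes the technical heart of the argument.
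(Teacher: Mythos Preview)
Your proposal has a genuine structural gap. The duality you write is missing the key $d^{-1}$ factor: the correct relation (after passing to the dynamical super-potential $\cV_S=\cU_S-\cU_{T^p}-c_S$) is $\cV_{L(S)}(R)=d^{-1}\cV_S(\Lambda R)$, so iteration gives $\cV_{L^n(S)}(R)=d^{-n}\cV_S(\Lambda^n R)$, not $(\cU_S-\cU_{T^p})(\Lambda^n R)$. This $d^{-n}$ is the entire source of exponential decay; without it there is nothing to prove, and with it the task becomes showing that $\cV_S(\Lambda^n R)$ grows at most sub-exponentially. But this is exactly where your argument breaks down: $\Lambda R=d^{-p+1}f_*R$ is \emph{not} smooth---it is singular along the critical-value hypersurface $\Psi_1$ of $f$---and $\Lambda^n R$ is singular on $\bigcup_{j<n} f^j(\Psi_1)$, which may well meet $E$ and the support of $S$. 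So $\cV_S(\Lambda^n R)$ is not a priori finite, let alone controllable, and the clean iteration you propose cannot be run.

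The paper (and \cite{Ahn2015}, where this theorem is actually proved; the present paper only cites it and then generalizes it in Theorem~\ref{thm:mainthm}) circumvents this by a regularization--telescoping scheme: one sets $R_{n,0}=R$ and $R_{n,i}=(\Lambda R_{n,i-1})_{\epsilon_{n,i}}$, obtaining the expansion \eqref{eqn:expansion} in Section~\ref{sec:proof1}, and then estimates each error term $d^{-i}\langle V_{n-i},\Lambda(R_{n,i-1})-R_{n,i}\rangle$. The exceptional set $E$ is \emph{not} defined by any spectral-gap property of $\Lambda$; it comes from the analytic submultiplicative cocycles of Section~\ref{sec:analcocycle}, namely the loci where the local multiplicity of $f^n$ or the Lelong number of $[\Psi_n]$ is abnormally large (Lemma~\ref{lem:analyticmultico}). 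The estimates then require a \emph{three}-way splitting of $\PP^k$ (near $E$; near $\Psi_1$ but away from $E$; away from both), not the two-way split you describe, and each region demands its own Lojasiewicz-type and H\"older-continuity inputs. Your final paragraph correctly senses that reconciling regularities is the heart of the matter, but the concrete mechanism---regularized telescoping plus multiplicity-based definition of $E$ plus three-region analysis---is absent from the outline.
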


The purpose of this paper is to develop natural notions of local continuity and boundedness of super-potentials on $\PP^k$ (see Section \ref{sec:super-potentials}), and to improve Theorem \ref{thm:2015} by allowing $S\in\cC_p$ to have small singularities over $E$ as follows: 
\begin{theorem}\label{thm:mainthm}
Let $f$ and $T$ be as in Theorem \ref{thm:2015}. Then, there exists a proper (possibly empty) invariant analytic subset $E$ for $f$ such that if $S\in\cC_p$ is a current whose super-potential $\cU_S$ of mean $0$ is continuous/bounded near $E$ (or equivalently, $S\in\cC_p$ is PC/PB near $E$), then $d^{-pn}(f^n)^*S$ converges to $T^p$ exponentially fast in the sense of currents.
\end{theorem}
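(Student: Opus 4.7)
The plan is to reduce the PC/PB case to the smooth case of Theorem~\ref{thm:2015} by regularizing $S$ in a way that preserves local control of the super-potential near $E$. First, I would approximate $S\in\cC_p$ by a family of smooth currents $S_\epsilon\in\cC_p$ with $S_\epsilon\to S$, constructed by averaging $S$ against a smooth probability measure $\nu_\epsilon$ on $\aut(\PP^k)$ supported in a shrinking neighborhood of the identity. Since averaging by automorphisms translates, at the level of super-potentials, into an analogous averaging of $\cU_S$, the hypothesis that $\cU_S$ is continuous/bounded on a neighborhood $V$ of $E$ implies that $\cU_{S_\epsilon}$ is continuous/bounded on any $V'\Subset V$, with bounds uniform in $\epsilon$.

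Next, I would revisit the proof of Theorem~\ref{thm:2015} for each smooth $S_\epsilon$ and track the constants so that the exponential rate of convergence depends only on the PC/PB norm of $\cU_{S_\epsilon}$ on $V'$, not on any further smoothness of $S_\epsilon$. This rests on the identity that relates $\cU_{d^{-pn}(f^n)^* S_\epsilon}$ to $\cU_{S_\epsilon}$ composed with a suitably normalized iterated push-forward $\Lambda^n$ acting on test currents $R\in\cC_{k-p+1}$. Because $\Lambda^n R$ converges to the Green current $T^{k-p+1}$ exponentially fast outside every fixed neighborhood of $E$, and because the PC/PB hypothesis gives enough regularity of $\cU_{S_\epsilon}$ precisely on the set where the residual mass of $\Lambda^n R$ can still accumulate, the difference $\cU_{d^{-pn}(f^n)^* S_\epsilon}(R)-\cU_{T^p}(R)$ decays exponentially in $n$, with constants independent of $\epsilon$. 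Passing to the limit $\epsilon\to 0$ then transfers the estimate to $S$ itself and yields the stated exponential convergence in the sense of currents.

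The main obstacle will be this uniformity step. The proof of Theorem~\ref{thm:2015} as written presumably exploits full smoothness near $E$ at several points --- e.g.\ to ensure that $(f^n)^* S_\epsilon$ stays smooth near $E$ and to bound error terms via classical Hölder estimates on $\aut(\PP^k)$. Replacing these ingredients by purely potential-theoretic estimates, anchored in the local continuity/boundedness notions introduced in Section~\ref{sec:super-potentials}, is where the new definitions must pay off. Concretely, I would need to verify that the pullback operator $d^{-p}f^*$ and the cut-off procedure near $E$ both preserve the local PC/PB norm of the super-potential, up to a controlled loss on a slightly enlarged neighborhood of $E$. Iterating this stability and combining it with the exponential contraction of $\Lambda$ outside $E$ --- which is the dynamical content already used in \cite{DS2009, Ahn2015} --- should then deliver the required uniform exponential decay, completing the reduction to the smooth case.
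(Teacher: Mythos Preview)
Your proposal correctly identifies the overall architecture --- regularize, obtain estimates uniform in the regularization parameter, pass to the limit --- and this is indeed what the paper does (it works with $(L^{n-i}(S))_\theta$ and proves bounds uniform in $\theta$). But the step you flag as ``the main obstacle'' is in fact the entire content of the theorem, and your plan to ``apply Theorem~\ref{thm:2015} and track the constants'' cannot work as stated. The constants in the proof of Theorem~\ref{thm:2015} depend on $\cC^\alpha$-norms of $S$ near $E$ (this is what smoothness is used for there), and for $S_\epsilon$ obtained by averaging over $\aut(\PP^k)$ these norms blow up like a negative power of $\epsilon$. So the reduction to the smooth case gives nothing; one must rework the estimate near $E$ from scratch under the weaker PB hypothesis.

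The paper does this with specific new ingredients that your outline does not anticipate. In the telescoping expansion of $\cV_{L^n(S)}(R)$, the region near $E$ (called $W''_{n,i}$) is handled not by bounding $S$ or $L^{n-i}(S)$ directly, but by bounding the Green quasi-potential $U_{(L^{n-i}(S))_\theta}$ from below by a \emph{closed} negative current $\int K_n\wedge R$ plus a small error $\int K'_n\wedge R$, using the decomposition $K\ge \eta\Theta$ of the Green kernel (Proposition~\ref{prop:kernel}). The closed piece, once localized by a DSH cut-off $\chi_{n,i}$ from Lemma~\ref{lem:generalDSHfinitecutoff}, becomes a DSH current of controlled norm supported near $E$; the bounded-super-potential hypothesis is then invoked exactly once, on $\langle S-T^p,\Lambda^{n-i}(\chi_{n,i}U_n)_\theta\rangle$, via the definition of PB (see inequality \eqref{ineq:I}). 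The symmetry of the kernel $K(z,\xi)=K(\xi,z)$ is what allows one to pass between localizing the current and localizing its quasi-potential (Lemma~\ref{lem:doublecurrent}). Finally, a boundary-layer estimate (Lemma~\ref{lem:geotech}) controls the thin shell $W''_{n,i}\setminus W''_{n,i,1}$ by a geometric argument on tube volumes. None of these steps is a matter of ``tracking constants'' in the old proof; they replace the smoothness argument entirely. Your claim that $d^{-p}f^*$ preserves a local PC/PB norm is also not how the argument proceeds: what is used is that $\Lambda^{n-i}$ sends DSH currents supported near $E$ to DSH currents supported in the fixed neighborhood $E_{\epsilon_S}$ (by invariance of $E$ and Lemma~\ref{lem:lowerbound4inverse}), with mass preserved.
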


One well-known measurement of singularities of positive closed currents is the Lelong number. In the case of bidegree $(1, 1)$, the Lelong number works very well with their quasi-potentials. 
On the other hand, even though super-potentials, which were introduced in \cite{DS2009} by Dinh-Sibony, can be regarded as an analogue of quasi-potentials for the currents in $\cC_p$ with $1<p<k$, it is not clear whether the Lelong number works well with super-potentials. 
%
So, we rather consider the singularities of quasi-potentials. If a quasi-potential function of $S\in\cC_1$ is continuous/bounded in an open subset $U\subseteq\PP^k$, the Lelong number of $S$ is $0$ in $U$. We generalize these to the continuity/boundedness of super-potentials in an open subset $U\subseteq\PP^k$ for currents $S\in\cC_p$. Indeed, Theorem \ref{thm:Lelong0} says that the Lelong number of $S$ is 0 in $U$. So, it is reasonable to regard them as ``small'' singularities. These notions extend the notions of continuous/bounded super-potentials in \cite{DS2009}.
\smallskip

The idea of the proof of Theorem \ref{thm:mainthm} is improving the proof of Theorem \ref{thm:2015} by adapting the localization of Green quasi-potentials used in 
the proof of Proposition 2.3.6 in \cite{DS2009} with the symmetry of the Green quasi-potential kernel (see Section \ref{sec:super-potentials}) and some technical settings for the computations near $E$. In general, it is not easy to localize Green quasi-potentials since they are not closed. Instead, we estimate Green quasi-potentials from below by negative closed currents with small errors. The point is that by localizing negative closed currents, we obtain DSH currents (see Section \ref{sec:dsh,pc,pb}). The technical settings are to control the DSH norm of the localized currents and the small errors. The symmetry of the Green quasi-potential kernel links the localization of a current and the localization of its Green quasi-potential. We also have to handle a boundary integral type computation (Lemma \ref{lem:geotech}) for the localization, which is also different from the case of Theorem \ref{thm:2015}.
\smallskip


As a historical remark for this type of equidistribution for holomorphic endomorphisms of $\PP^k$, for the case of $p=1$, see, for instance, Forn{\ae}ss-Sibony (\cite{FS1995}), Russakovskii-Shiffman (\cite{RS}), Sibony (\cite{sibony}), Favre-Jonsson (\cite{brolinthm}, \cite{eigenvaluations}),  Guedj (\cite{Guedj}), Dinh-Sibony (\cite{DS2008}), Parra (\cite{Parra}) and Taflin (\cite{Taflin2011}), and for the case of $p=k$, see, for example, Freire-Lopes-Ma\~{n}\'{e} (\cite{FLM}), Lyubich (\cite{Lyubich}), Forn{\ae}ss-Sibony (\cite{FS1994}), Briend-Duval (\cite{BD}) and Dinh-Sibony (\cite{DS2010}). The case of $1<p<k$ has been studied in Dinh-Sibony (\cite{DS2009}) and Ahn (\cite{Ahn2015}). Theorem 5.4.4 in \cite{DS2009} corresponds to the case where $E$ is empty. Theorem \ref{thm:mainthm} implies Theorem 1.3 in \cite{Ahn2015}. 
\smallskip

We also consider equidistribution of positive closed currents for regular polynomial automorphisms of $\CC^k$. This equidistribution theorem was first observed in Remark 5.5.8 in \cite{DS2009}. Here, we give details of the proof. 

\begin{theorem}[See Remark 5.5.8 in \cite{DS2009}]\label{thm:2ndmainthm}
Let $f:\CC^k\to\CC^k$ be a regular polynomial automorphism of $\CC^k$ of degree $d_+\geq 2$ and $p>0$ an integer such that $\dim I_+=k-p-1$ and $\dim I_-=p-1$ where $I_\pm$ are the indeterminancy sets of $f^{\pm 1}$, repsectively. Let $T_+$ be the Green current associated with $f$. Then, for $S\in\cC_p$ whose super-potential $\cU_S$ of mean $0$ is continuous/bounded near $I_-$ (or equivalently, $S\in\cC_p$ is PC/PB near $I_-$), $d_+^{-pn}(f^n)^*S$ converges to $T_+^p$ in the sense of currents.
\end{theorem}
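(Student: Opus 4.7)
The plan is to adapt the strategy used for Theorem \ref{thm:mainthm}, with the indeterminacy set $I_-$ of $f^{-1}$ taking over the role of the invariant analytic subset $E$. Working on the compactification $\PP^k$, the map $f$ extends meromorphically; under the assumed dimension conditions, the iterated pullback $(f^n)^*$ on $(p,p)$-currents has the well-defined normalization $d_+^{pn}$, and $T_+^p$ is defined as a positive closed $(p,p)$-current whose super-potential has good regularity off $I_-$.

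By the standard super-potential characterization of weak convergence of currents in $\cC_p$, it suffices to establish
$$\cU_{d_+^{-pn}(f^n)^*S}(R) \longrightarrow \cU_{T_+^p}(R)$$
for $R$ in a dense subset of $\cC_{k-p+1}$ consisting of test currents that are smooth near $I_+$. The functorial properties of super-potentials under pullback (Section 5 of \cite{DS2009}) allow this quantity to be rewritten, modulo easily controlled cohomological corrections, in terms of $d_+^{-pn}\cU_S((f^n)_*R)$. Because the push-forward $d_+^{-pn}(f^n)_*R$ concentrates its mass near $I_-$ as $n\to\infty$ (this is precisely the content of the dimension hypotheses, dual to the statement that pullback concentrates mass on $I_+$), the key obstruction to passing to the limit is the possible wildness of $\cU_S$ along $I_-$, which is exactly what the PC/PB assumption rules out.

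I would then follow the localization-and-error-splitting scheme of the proof of Theorem \ref{thm:mainthm}. Using cutoff functions in shrinking neighborhoods of $I_-$ together with the symmetry of the Green quasi-potential kernel, one splits $\cU_S((f^n)_*R)$ into a main term, depending only on the values of $\cU_S$ in a fixed neighborhood of $I_-$ and hence uniformly bounded by the local hypothesis, plus an error term whose DSH norm tends to zero as $n\to\infty$. Comparing with the analogous decomposition for $T_+^p$ then identifies the limit.

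The principal technical obstacle is in the error estimate: in Theorem \ref{thm:mainthm} the invariance $f^{-1}(E)\subseteq E$ supplies the contracting structure needed to drive the error to zero, whereas here one must instead exploit the attracting nature of $I_-$ under the push-forward dynamics of $f$ on $(k-p+1,k-p+1)$-currents, as well as establish a variant of the boundary-integral computation in Lemma \ref{lem:geotech} adapted to the local geometry of $I_-$ inside the hyperplane at infinity of $\PP^k$. No exponential rate is claimed, so the qualitative decay provided by the concentration of push-forwards near $I_-$ should suffice once the DSH-norm bookkeeping is in place.
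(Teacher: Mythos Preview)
Your proposal contains a genuine gap, and the paper takes a different (and much simpler) route.

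The gap is in your concentration claim. You want to test against $R$ in a dense subset of $\cC_{k-p+1}$ (smooth near $I_+$, hence essentially all smooth $R$), and you assert that $\Lambda_p^n(R)$ concentrates its mass near $I_-$ as $n\to\infty$. But this is false for general $R$: the attracting basin of $I_-$ under $f$ is $\PP^k\setminus\overline{K_+}$, so any mass that $R$ carries on $\overline{K_+}$ stays bounded away from $I_-$ under iteration. In particular, for such $R$ you cannot invoke the local boundedness of $\cU_S$ near $I_-$ to control $\cU_S(\Lambda_p^n(R))$. Thus the main term in your splitting is not under control, and the argument stalls at exactly the step where you hoped the PC/PB hypothesis would do the work.

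The paper circumvents this by \emph{not} trying to establish super-potential convergence on a dense set. Instead it tests only against smooth $(k-p,k-p)$-forms $\varphi$ with $\supp\varphi\Subset\PP^k\setminus\overline{K_+}$. For these, $dd^c\varphi=R_+-R_-$ with $\supp(R_+-R_-)\Subset\PP^k\setminus\overline{K_+}$, so the attracting property forces $\supp\Lambda_p^n(R_\pm-R_-)$ into the neighborhood of $I_-$ where $\cU_S$ is bounded, giving $d_+^{-n}|\cU_S(\Lambda_p^n(R_+-R_-))|\lesssim d_+^{-n}$; the corresponding term for $T_+^p$ is handled using $\supp T_+^p\subset\overline{K_+}$ and disjointness of supports. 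This only shows that every limit $S'$ of $d_+^{-pn}(f^n)^*S$ agrees with $T_+^p$ on $\PP^k\setminus\overline{K_+}$, which is far from density. The identification $S'=T_+^p$ is then obtained from a rigidity theorem (Theorem 5.5.4 in \cite{DS2009}), not from further analysis. None of the three-region splitting, the localized Green quasi-potential estimates, or an analogue of Lemma~\ref{lem:geotech} is needed here; the proof is a few lines once the rigidity result is in hand.
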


Furthermore, we estimate the speed of convergence in some cases and improve Remark 11.2 in \cite{Ahn2015}. The set $K_+$ below denotes the set of points of bounded orbit under $f$.
\begin{theorem}\label{thm:2ndmainthmspeed}
Let $f$ be a regular polynomial automorphism of degree $d_+\geq 2$ and $1\leq s\leq p$. Let $W_1$ and $W_2$ be neighborhoods of $I_-$ such that $W_1\Subset W_2\Subset\PP^k\setminus \overline{K_+}$, and $\chi:\PP^k\to [0, 1]$ a smooth cut-off function with $\chi\equiv 1$ on $W_1$ and $\supp \chi\subset W_2$. Let $S\in\cC_s$ be such that $|\cU_S(dd^c(\chi U_R))|<c$ for every smooth $R\in\cC_{k-s+1}$ where $\cU_S$ is the super-potential of $S$ of mean $0$ and $U_R$ is the Green quasi-potential of $R$ for a fixed Green quasi-potential kernel. Then, we have
\begin{displaymath}
d_+^{-sn}(f^n)^*(S)\to T_+^s
\end{displaymath} 
exponentially fast on $\cD^{k-s}(W)$ in the sense of currents where $W$ is an open subset relatively compact in $\PP^k\setminus I_+$ and $\cD^{k-s}(W)$ is the set of smooth $(k-s, k-s)$-forms $\varphi$ with $\supp \varphi\Subset W$. In particular, due to the extension theorem of Harvey-Polking in \cite{HarveyPolking}, we have equidistribution for every such $S\in\cC_s$.
\end{theorem}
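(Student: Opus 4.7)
The plan is to adapt the strategy of Theorem \ref{thm:mainthm} to the regular polynomial automorphism setting, exploiting that convergence only needs to be tested against forms supported in $\PP^k \setminus I_+$. I would fix $\varphi \in \cD^{k-s}(W)$ and, using push-pull duality together with the invariance $(f^n)^*T_+^s = d_+^{sn}T_+^s$, rewrite
\begin{equation*}
\langle d_+^{-sn}(f^n)^*S - T_+^s,\ \varphi\rangle = \langle S - T_+^s,\ d_+^{-sn}(f^n)_*\varphi\rangle.
\end{equation*}
Setting $\Psi_n := d_+^{-sn}(f^n)_*\varphi$ (a smooth $(k-s,k-s)$-form), the closed correction between $\Psi_n$ and its Green quasi-potential $U_{dd^c\Psi_n}$ pairs trivially with $S - T_+^s$, since both currents share the same cohomology class. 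This yields $\langle S - T_+^s, \Psi_n\rangle = (\cU_S - \cU_{T_+^s})(dd^c\Psi_n)$, reducing the problem to a super-potential evaluation.

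Next I would use the functoriality of super-potentials coming from the symmetry of the Green quasi-potential kernel (Section \ref{sec:super-potentials}) to convert this, up to corrections of order $d_+^{-sn}$, into $d_+^{-n}(\cU_S - \cU_{T_+^s})(\Lambda^n\, dd^c\varphi)$, where $\Lambda := d_+^{-(s-1)}f_*$ denotes the normalized pushforward on $(k-s+1,k-s+1)$-currents. Since $\cU_{T_+^s}$ is bounded on the $\Lambda^n$-orbit (the analogue of the regularity of $\cU_{T^s}$ in the endomorphism case), everything reduces to bounding $\cU_S(\Lambda^n\, dd^c\varphi)$ uniformly in $n$. The crucial step is a localization via $\chi$: setting $R_n := \Lambda^n\, dd^c\varphi$ and writing
\begin{equation*}
\Lambda^n\, dd^c\varphi = dd^c(\chi U_{R_n}) + dd^c((1-\chi)U_{R_n})
\end{equation*}
modulo closed corrections (which again pair trivially), the hypothesis $|\cU_S(dd^c(\chi U_R))| < c$ applied at $R = R_n$ delivers a uniform bound on the $\chi$-piece. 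The complementary $(1-\chi)$-piece, whose support lies in $\PP^k \setminus W_1 \supset \overline{K_+}$, I would handle by a boundary integral of the type of Lemma \ref{lem:geotech}, using the separation $W_2 \Subset \PP^k \setminus \overline{K_+}$.

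Combining these estimates gives $|\langle d_+^{-sn}(f^n)^*S - T_+^s, \varphi\rangle| = O(d_+^{-n})\|\varphi\|_{\cC^2}$, i.e.\ exponential convergence on $\cD^{k-s}(W)$. For the concluding assertion, note that $\dim I_+ = k-p-1$ with $p\geq 1$, so $I_+$ has complex codimension at least $2$; the Harvey-Polking extension theorem \cite{HarveyPolking} then promotes convergence on test forms avoiding $I_+$ to equidistribution of currents on the whole of $\PP^k$. I expect the main obstacle to be the uniform control of the $(1-\chi)$-piece: one must carefully track how $(f^n)_*$ interacts with the cutoff $\chi$, and the hypothesis $W_2 \Subset \PP^k \setminus \overline{K_+}$ is essential so that the bounded orbits (playing the role of the invariant set $E$ from Theorem \ref{thm:mainthm}) remain separated from the localization region around $I_-$.
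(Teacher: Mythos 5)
Your overall direction matches the paper's: test against $\varphi$ supported away from $I_+$, push forward by $d_+^{-sn}(f^n)_*$, localize with $\chi$, invoke the hypothesis on $\cU_S(dd^c(\chi U_\cdot))$ near $I_-$, and finish with Harvey--Polking. But there is a genuine gap in how you treat the complement of the localization region. You propose to handle the $(1-\chi)$-piece ``by a boundary integral of the type of Lemma~\ref{lem:geotech}''. That lemma is a tubular-shell volume estimate tailored to the invariant analytic set $E$ in the endomorphism case; it has no role here. The actual difficulty in the automorphism setting is that $\Lambda_s^n(R)$ blows up near $I_-$ while remaining smooth off $I_-$, and what controls the complementary piece is the exponential bound $\|\Lambda_s^n(R)\|_{\infty,\PP^k\setminus W'}\lesssim M^{3kn}$ (for $W'$ a small $f$-invariant neighborhood of $I_-$ and $M$ a bound on $\|Df^{-1}\|$ off $W'$), combined with an $\varepsilon$-regularization at the specific scale $\varepsilon=\max\{d_+,M\}^{-3kn}$. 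One then bounds $|\langle U_{S_\theta},\Lambda_s^n(R)-(\Lambda_s^n(R))_\varepsilon\rangle_{\PP^k\setminus W'}|\lesssim\varepsilon M^{3kn}\leq 1$ via Lemma~2.1.8 of \cite{DS2009} and $|\cU_{S_\theta}((\Lambda_s^n(R))_\varepsilon)|\lesssim|\log\varepsilon|\approx n$ via Lemma~3.2.10 of \cite{DS2009}. Replacing this with the geometric shell estimate of Lemma~\ref{lem:geotech} would not close the argument.

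Two further points. First, you apply the hypothesis $|\cU_S(dd^c(\chi U_R))|<c$ at $R_n:=\Lambda^n\,dd^c\varphi$, but this current is neither positive nor of unit mass, and the hypothesis is stated for smooth $R\in\cC_{k-s+1}$. One must first split $dd^c\varphi=\Phi_+-\Phi_-$ into smooth positive closed forms supported in a relatively compact $\widetilde W\Subset\PP^k\setminus I_+$ (Proposition~3.1.9 of \cite{DS2009}) and then work with $\Lambda_s^n(R)$ for $R\in\cC_{k-s+1}(\widetilde W)$, pushing the hypothesis through a regularization since $\Lambda_s^n(R)$ is only smooth off $I_-$. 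Second, your parenthetical claim that $\cU_{T_+^s}$ enjoys ``the analogue of the regularity of $\cU_{T^s}$'' is false---the paper explicitly points out that $T_+^p$ is not PC. What is actually used is only the generic upper bound of super-potentials on $\cC_{k-s+1}(W)$, together with a dynamical super-potential $\cV_S$ normalized via the $\Lambda_s$-invariant current $R_\infty$ (since $\cU_S(T_+^{k-s+1})$ need not be finite, the endomorphism-case normalization is unavailable); this construction is essential to the telescoping and is absent from your outline.
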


A crucial difference from the case of holomorphic endomorphisms of $\PP^k$ is that the behavior of $f^{\pm 1}$ is very wild near $I_\pm$. The idea for the proof of Theorem \ref{thm:2ndmainthm} is that instead of directly showing the convergence, we use the rigidity of the set $K_+$ as in Section 5.5 in \cite{DS2009}. Concerning the proof of Theorem \ref{thm:2ndmainthmspeed}, we basically use similar arguments to the proof of Theorem \ref{thm:mainthm}. The main difference is that since quasi-potentials of $f_*(\omega)$ are not H\"older continuous near $I_-$ and $f_*(R)$ is smooth outside $I_-$ for a smooth $R\in\cC_{k-s+1}$, we split $\PP^k$ into a neighborhood of $I_-$, which is invariant under $f$, and its complement. We apply the hypotheses to the neighborhood of $I_-$ and the smoothness of $f_*(R)$ to the complement set.

\medskip

This paper is organized as follows. In Section 2, we discuss currents. In Section 3, we introduce currents PC/PB near an analytic subset of $\PP^k$. In Section 4, we discuss super-potentials and introduce super-potentials continuous/bounded near an analytic subset of $\PP^k$. In Section 5, we discuss multiplicities related to $f^n$. In Section 6, we briefly summarize some Lojasiewicz type inequalities and construct a family of good cut-off functions. In Section 7, we prove Theorem \ref{thm:mainthm}. In Section 8, we prove Theorem \ref{thm:2ndmainthm} and Theorem \ref{thm:2ndmainthmspeed}.

%

\subsection*{Notations}
In this paper, we fix a local holomorphic coordinate chart for the complex Lie group $\aut(\PP^k)$ in a neighborhood of $\id\in\aut(\PP^k)$ and a smooth probability measure which has compact support inside the chart satisfying the following. Let $\zeta$ denote the coordinate system over the chart and $\tau_\zeta$ its corresponding automorphism in $\aut(\PP^k)$. We can choose the chart and a norm $\|\cdot\|_A$ on the chart so that $\zeta=0$ at $\id\in\aut(\PP^k)$, $\{\|\zeta\|_A<2\}$ lies inside the chart and $\|\cdot\|_A$ is invariant under the involution $\tau\mapsto\tau^{-1}$. Let $\rho$ denote the fixed smooth probability measure. We can choose $\rho$ so that $\rho$ is radial and decreasing as $\|\zeta\|_A$ increases, and $\supp \rho \Subset \{\|\zeta\|_A<1\}$. In particular, $\rho$ is preserved under the involution $\tau\mapsto\tau^{-1}$.

Below is a list of some frequently used notations:
\begin{itemize}
\item $\Delta$: the unit disc in $\CC$;
\item $\dist(\cdot, \cdot), \dist_{\mathrm{euc}}(\cdot, \cdot)$: the distances with respect to the Fubini-Study metric and with respect to the Euclidean metric, respectively;
\item $\lesssim, \gtrsim$: $\leq, \geq$ upto a constant, respectively. The dependence of the constant will be specified if necessary;
\item $\|\cdot\|$: the Euclidean norm on a Euclidean space or the mass of a positive current (cf. Section \ref{sec:currents}) and its meaning will be clear from the context;
\item $\|\cdot\|_{\cL^p}, \|\cdot\|_\infty, \|\cdot\|_{\cC^\alpha} ($resp., $\|\cdot\|_{\cL^p(U)}, \|\cdot\|_{\infty, U}, \|\cdot\|_{\cC^\alpha(U)})$: the $\cL^p$-norm, the sup-norm and the $\cC^\alpha$-norm of a function or the sum of such norms of the coefficients of a form over $\PP^k$ (resp., over $U\subset\PP^k$) with respect to a fixed finite atlas of $\PP^k$, respectively;
\item $A_\epsilon:=\{x\in\PP^k: \dist(x, A)<\epsilon\}$ for $\epsilon>0$ and a subset $A\subset \PP^k$, and the meaning in other cases such as in $x_j$ will be clear from the context;
\item $\nu(x, R)$: the Lelong number of a positive closed $(1, 1)$-current $R$ at $x$;
\item $\Psi_n$: the set of the critical values of $f^n$.
\end{itemize}

\subsection*{Acknowledgement}
The author would like to thank John Erik Forn{\ae}ss and Viet Anh Nguyen for helpful discussions and suggestions. The author expresses his thanks to Ngaiming Mok for suggestions. The author also would like to thank the referee for careful reading and for suggestions. Referee's comments and
suggestions were helpful in improving and clarifying the arguments.

\section{Currents}\label{sec:currents}
For basics of currents in complex analysis, we refer the reader to Sibony (\cite{sibony}) and Demailly (\cite{Demailly}). Here, we list some definitions and properties.

Let $S$ be a positive current of bidegree $(p, p)$ on $\PP^k$. The mass $\|S\|$ of $S$ is equivalent to
\begin{displaymath}
\int_{\PP^k}S\wedge \omega^{k-p}
\end{displaymath}
and therefore, in this paper, we rather define the mass $\|S\|$ of $S$ as above. If $S$ is negative, we define $\|S\|:=-\int_{\PP^k}S\wedge\omega^{k-p}$.

Let $\cC_p$ be the space of positive closed $(p, p)$-currents of unit mass on $\PP^k$. The space $\cC_p$ is compact in the weak topology. The weak topology on $\cC_p$ is induced by the distance $d_{\cC_p}$ below. (cf. \cite{DS2009}) If $S$ and $S'$ are currents in $\cC_p$, then
\begin{displaymath}
d_{\cC_p}:=\sup_{\|\Phi\|_{\cC^1}\leq 1}|\langle S-S', \Phi\rangle|
\end{displaymath}
where $\Phi$ is a smooth $(k-p, k-p)$-form on $\PP^k$ and $\|\Phi\|_{\cC^1}$ is the sum of the $\cC^1$-norms of the coefficients of $\Phi$ with respect to a fixed finite atlas of $\PP^k$.


Smooth forms are dense in $\cC_p$. (cf. \cite{DS2009}) Let $h_\theta:\{\|\zeta\|_A<1\}\to\{\|\zeta\|_A<1\}$ be an endomorphism defined by $h_\theta(\zeta):=\theta\zeta$ for $\theta\in \Delta$ where $\{\|\zeta\|_A<1\}$ is the coordinate chart of $\aut(\PP^k)$ mentioned in Introduction. Define $\rho_\theta:=(h_\theta)_*(\rho)$.

\begin{definition}[See Section 2 in \cite{DS2009}]
For any $(p, p)$-current $R$ on $\PP^k$, we define the $\theta$-regularization $R_\theta$ of $R$ by
\begin{displaymath}
R_\theta:=\int_{\aut(\PP^k)}(\tau_\zeta)_*Rd\rho_\theta(\zeta)=\int_{\aut(\PP^k)}(\tau_{\theta\zeta})_*Rd\rho(\zeta)=\int_{\aut(\PP^k)}(\tau_{\theta\zeta})^*Rd\rho(\zeta)
\end{displaymath}
where $\tau_\zeta$ is the automorphism in $\aut(\PP^k)$ whose coordinate is $\zeta$.
\end{definition}

\begin{proposition}[See Proposition 2.1.6 in \cite{DS2009}]\label{prop:regCurrents}
If $\theta\neq 0$, then $R_\theta$ is a smooth form which depends continuously on $R$. Moreover, for every $\alpha\geq 0$ there is a constant $c_\alpha$ independent of $R$ such that
\begin{displaymath}
\|R_\theta\|_{\cC^\alpha}\leq c_\alpha\|R\||\theta|^{-2k^2-4k-\alpha}.
\end{displaymath}
If $K$ is a compact subset in $\Delta\setminus\{0\}$, then there is a constant $c_{\alpha, K}>0$ such that for $\theta, \theta'\in K$,
\begin{displaymath}
\|R_\theta-R_{\theta'}\|_{\cC^\alpha}\leq c_{\alpha, K}\|R\||\theta-\theta'|.
\end{displaymath}
\end{proposition}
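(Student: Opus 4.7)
The plan is to exploit the transitive action of $\aut(\PP^k)$ on $\PP^k$ to represent $R_\theta$ as a convolution of $R$ against a smooth, compactly supported kernel derived from $\rho_\theta$, and then to read off both the smoothness of $R_\theta$ and the claimed estimates from that kernel. First I would note that for any smooth test $(k-p,k-p)$-form $\Phi$,
\begin{displaymath}
\langle R_\theta,\Phi\rangle=\int_{\aut(\PP^k)}\langle R,\tau_{\theta\zeta}^*\Phi\rangle\,d\rho(\zeta)=\int\langle R,\tau_\eta^*\Phi\rangle\,d\rho_\theta(\eta),
\end{displaymath}
where $\rho_\theta=(h_\theta)_*\rho$ has smooth density $p_\theta(\eta)=|\theta|^{-2k^2-4k}p(\eta/\theta)$ on the fixed coordinate chart of $\aut(\PP^k)$, whose real dimension is $2k^2+4k$.

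For the smoothness statement I would use that the orbit map $\aut(\PP^k)\to\PP^k$, $\tau\mapsto\tau(x_0)$, is a submersion at every $x_0\in\PP^k$. For $\theta\neq 0$, a fiber integration of $\rho_\theta$ over the fibers of this submersion yields a smooth kernel $K_\theta(x,y)$ with $R_\theta(x)=\int_{\PP^k}K_\theta(x,y)\wedge R(y)$, first for smooth $R$ and then, by density of smooth forms in $\cC_p$, for arbitrary currents. Joint smoothness of $K_\theta$ in $(x,y)$ for $\theta\neq 0$ then gives smoothness of $R_\theta$, while weak continuity of the pairing $R\mapsto\langle R,K_\theta(x,\cdot)\rangle$ gives continuity of $R\mapsto R_\theta$. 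For the $\cC^\alpha$ estimate, the key observation is that the group structure lets one transfer an $x$-derivative of $R_\theta$ to an $\eta$-derivative falling on $p_\theta$, after choosing (smoothly in $x$) a right-inverse of the differential of the orbit map. Since $\|\partial^\beta p_\theta\|_{L^\infty}\leq c_\beta|\theta|^{-2k^2-4k-|\beta|}$, pairing with the current $R$ and summing over $|\beta|\leq\alpha$ produces
\begin{displaymath}
\|R_\theta\|_{\cC^\alpha}\leq c_\alpha\|R\|\,|\theta|^{-2k^2-4k-\alpha}.
\end{displaymath}

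For the Lipschitz estimate on $K\Subset\Delta\setminus\{0\}$, I would write $R_\theta-R_{\theta'}=\int_0^1\frac{d}{dt}R_{(1-t)\theta+t\theta'}\,dt$. The $\theta$-derivative at a point of the segment $[\theta,\theta']$ is controlled by the same kernel argument with one extra $\eta$-derivative of $p_\theta$, and $|(1-t)\theta+t\theta'|$ stays bounded away from $0$ on $K$ by compactness, which yields the uniform constant $c_{\alpha,K}$. The hardest step is the technical bookkeeping in the kernel representation: converting partial derivatives in $x\in\PP^k$ into partial derivatives in $\eta$ on the group chart, handling the Jacobian contributions of $\tau_\eta^*$ acting on the coefficients of $R$, and verifying the estimates are uniform in $x$. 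The exponent $-2k^2-4k-\alpha$ is not sharp---a finer fiberwise integration would replace $2k^2+4k$ by the real dimension $2k$ of $\PP^k$---but it is the natural one obtained directly from $L^\infty$ control on $p_\theta$ and its derivatives on the group chart.
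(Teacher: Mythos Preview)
The paper does not supply its own proof of this proposition: it is quoted verbatim from \cite{DS2009} as background, with no argument given. So there is no ``paper's proof'' to compare against; your sketch stands on its own.

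Your approach is the standard one and is essentially correct. The identification of the real dimension of $\aut(\PP^k)=\mathrm{PGL}(k+1,\CC)$ as $2k^2+4k$, the scaling $p_\theta(\eta)=|\theta|^{-2k^2-4k}p(\eta/\theta)$, the submersion/fiber-integration argument producing a smooth kernel, and the transfer of $x$-derivatives to $\eta$-derivatives on the group are all the right ingredients and yield the stated exponent.

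One small gap: in the Lipschitz estimate you assert that ``$|(1-t)\theta+t\theta'|$ stays bounded away from $0$ on $K$ by compactness,'' but compactness of $K\Subset\Delta\setminus\{0\}$ does not by itself keep the \emph{segment} $[\theta,\theta']$ away from $0$ (e.g.\ $K=\{1/2,-1/2\}$). This is easily repaired in either of two ways: (i) use the fact recorded just after the proposition that $R_\theta$ depends only on $|\theta|$ (since $\rho$ is radial), so one may take $\theta,\theta'$ real and positive, where the segment argument is valid; or (ii) enlarge $K$ to a compact $K'\Subset\Delta\setminus\{0\}$ containing all short segments with endpoints in $K$, apply the mean-value bound for $|\theta-\theta'|$ small, and use the uniform $\cC^\alpha$ bound on $K$ to handle $|\theta-\theta'|$ bounded below.
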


If $R$ is positive and closed, then so is $R_\theta$. If $|\theta|=|\theta'|$, then $R_\theta=R_{\theta'}$. The mass of $R_\theta$ is the same as the mass of $R$ (see Lemma 2.4.1 in \cite{DS2009}).

\begin{lemma}[See Lemma 2.1.8 in \cite{DS2009}]\label{lem:regulCurrents}
Let $K$ be a compact subset of $\aut(\PP^k)$. Let $W$ and $W_0$ be open sets in $\PP^k$ such that $\overline{W_0}\subset\tau(W)$ for every $\tau\in K$. If $R$ is of class $\cC^\alpha$ on $W$ with $\alpha\geq 0$, then $\tau_*(R)$ is of class $\cC^\alpha$ on $W_0$. Moreover, there is a constant $\tilde{c}>0$ such that for all $\tau, \tau'\in K$,
\begin{displaymath}
\|\tau_*(R)\|_{\cC^\alpha(W_0)}\leq \tilde{c}\|R\|_{\cC^\alpha(W)}
\end{displaymath}
and
\begin{displaymath}
\|\tau_*(R)-{\tau'}_*(R)\|_{\cC^\alpha(W_0)}\leq \tilde{c}\|R\|_{\cC^\alpha(W)}\dist_\aut(\tau, \tau')^{\min\{\alpha, 1\}},
\end{displaymath}
where $\dist_\aut$ is with respect to a fixed smooth metric on $\aut(\PP^k)$.
\end{lemma}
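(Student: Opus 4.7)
The approach is to rewrite $\tau_*(R)$ as the pullback $(\tau^{-1})^*(R)$ and exploit uniform smoothness of the family $\{\tau^{-1}\}_{\tau\in K}$. Since $\aut(\PP^k)\cong PGL(k+1,\CC)$ is a finite-dimensional complex Lie group and $K$ is compact, the $\cC^\beta$-norms of $\tau^{-1}$ in any fixed finite atlas of $\PP^k$ are bounded by constants depending only on $K$ and $\beta$. The hypothesis $\overline{W_0}\subset\tau(W)$ for every $\tau\in K$, together with compactness of $K$, yields by a standard covering argument an open set $W'$ with $\overline{W_0}\Subset W'\Subset W$ such that $\tau^{-1}(W_0)\subset W'$ for every $\tau\in K$. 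All subsequent estimates take place on $W'$, so only $R|_W$ is ever used.

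For the first inequality, I would fix a finite atlas covering $W'\cup W_0$, write the coefficients of $(\tau^{-1})^*(R)$ in local coordinates, and apply the classical chain-rule / Fa\`a-di-Bruno estimate for compositions in $\cC^\alpha$. This yields $\|(\tau^{-1})^*(R)\|_{\cC^\alpha(W_0)}\leq C\|R\|_{\cC^\alpha(W')}$ with $C$ depending only on finitely many top derivatives of $\tau^{-1}$, which are uniformly bounded by the previous paragraph; this produces the desired $\tilde{c}$.

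For the second inequality, I would split into two regimes. When $\alpha\in[0,1]$, for $x\in W_0$ I bound pointwise $|(\tau^{-1})^*(R)(x)-(\tau'^{-1})^*(R)(x)|$ by combining (i) H\"older continuity of $R$ of order $\alpha$ with $\dist(\tau^{-1}x,\tau'^{-1}x)\lesssim\dist_\aut(\tau,\tau')$ (uniform on $K$), and (ii) Lipschitz dependence of the Jacobian factor of $\tau^{-1}$ on $\tau$; the $\cC^\alpha$ seminorm of the difference is then controlled by the standard two-scale argument, separating pairs of points at distance above/below $\dist_\aut(\tau,\tau')$. When $\alpha\geq 1$, I connect $\tau$ and $\tau'$ by a smooth path $\tau_t$ of length $\lesssim\dist_\aut(\tau,\tau')$ in $\aut(\PP^k)$ and write $\tau_*(R)-\tau'_*(R)=\int_0^1\tfrac{d}{dt}(\tau_t^{-1})^*(R)\,dt$; the integrand, computed by the Lie-derivative formula, involves one spatial derivative of $R$ (absorbed in $\|R\|_{\cC^\alpha}$ since $\alpha\geq 1$) and uniformly bounded $t$-derivatives of $\tau_t^{-1}$, yielding the Lipschitz bound $\dist_\aut(\tau,\tau')=\dist_\aut(\tau,\tau')^{\min\{\alpha,1\}}$.

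The main technical point is the H\"older-modulus estimate for $(\tau^{-1})^*R-(\tau'^{-1})^*R$ in the regime $\alpha\in(0,1)$: the integral representation is unavailable (one only has $R\in\cC^\alpha$, so $dR$ need not be a function), so one must combine the $\cC^\alpha$ modulus of $R$ with the parametric dependence of $\tau^{-1}$ on $\tau$ through a careful two-scale splitting. All other pieces reduce to routine chain-rule bookkeeping once the uniform compactness of $\{\tau^{-1}\}_{\tau\in K}$ in every $\cC^\beta$ has been recorded, so only this H\"older step requires genuine care.
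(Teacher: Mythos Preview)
The paper does not contain a proof of this lemma; it is quoted from Dinh--Sibony \cite{DS2009}, Lemma~2.1.8, and only used as a black box (in Section~\ref{sec:poly}). So there is no proof in the paper to compare your approach against.

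Your outline is the standard one and is correct for the first inequality and for the second inequality when $\alpha\ge 1$: rewriting $\tau_*R=(\tau^{-1})^*R$, using uniform $\cC^\beta$-bounds on $\{\tau^{-1}:\tau\in K\}$ from compactness, Fa\`a di Bruno for the first estimate, and the path-integral/Lie-derivative representation for the Lipschitz case all go through exactly as you say.

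There is, however, a genuine gap in the $\alpha\in(0,1)$ case, precisely at the point you flag as delicate. Your two-scale argument gives
\[
|F(x)-F(y)|\ \lesssim\ \|R\|_{\cC^\alpha}\,\min\bigl\{|x-y|^\alpha,\ \dist_\aut(\tau,\tau')^\alpha\bigr\},
\]
but dividing by $|x-y|^\alpha$ and taking the supremum over $x\neq y$ yields only $[F]_{\cC^\alpha}\lesssim\|R\|_{\cC^\alpha}$, \emph{not} $\|R\|_{\cC^\alpha}\dist_\aut(\tau,\tau')^\alpha$. In fact the stronger seminorm bound is false in general: already for translations on a line and $R(x)=|x|^\alpha$, one has $F(x)=|x|^\alpha-|x-s|^\alpha$ with $F(0)=-|s|^\alpha$, $F(s)=|s|^\alpha$, so $|F(s)-F(0)|/|s|^\alpha=2$ for every $s\neq 0$; the $\cC^\alpha$-seminorm of $F$ does not tend to $0$ with $s$. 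Thus no ``careful two-scale splitting'' can produce the claimed exponent on the full H\"older seminorm from $R\in\cC^\alpha$ alone. Your pointwise argument in (i)--(ii) does give the correct bound $\|F\|_{\cC^0}\lesssim\|R\|_{\cC^\alpha}\dist_\aut(\tau,\tau')^\alpha$, which is what is actually used in this paper; you should check the precise formulation in \cite{DS2009} before trying to push the $\cC^\alpha$-seminorm further.
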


\section{DSH functions/currents and PC/PB currents}\label{sec:dsh,pc,pb}
\subsection{DSH functions} 
The class of DSH functions can be regarded as a generalization of the class of quasi-plurisubharmonic (q-psh for short) functions. (cf. Dinh-Sibony (\cite{DS2006}))
\begin{definition}[See \cite{DS2006}]
An integrable function $\varphi$ on $\PP^k$ is said to be DSH if it is equal to a difference of two q-psh functions outside a pluripolar set.
\end{definition}
Two DSH functions are identified if they are equal to each other outside a pluripolar set. The DSH functions on $\PP^k$ form a vector space over $\RR$ equipped with the following norm:
\begin{displaymath}
\|\varphi\|_\dsh:=\|\varphi\|_{\cL^1}+\inf\{\|T^+\|:dd^c\varphi=T^+-T^-, T^\pm\textrm{ are positive closed}\}.
\end{displaymath}

\subsection{DSH and PC/PB currents} For DSH and PC/PB currents, please refer to \cite{DS2009} and references therein. In this subsection, we first give their definitions and introduce locally PC/PB currents.

Below, we describe them slightly differently from those in \cite{DS2009} for computational convenience, but essentially, they are the same for our purpose. 
\begin{definition}
A real $(p, p)$-current $\varphi$ on $\PP^k$ is said to be DSH if there exist negative $(p, p)$-currents $\varphi_\pm$ and positive closed $(p+1, p+1)$-currents $\Omega_\pm$ on $\PP^k$ such that $\varphi=\varphi_+-\varphi_-$ and $dd^c\varphi=\Omega_+-\Omega_-$. We define the DSH-norm, denoted by the same notation as for DSH functions, by
\begin{displaymath}
\|\varphi\|_\dsh:=\inf\{\|\varphi_+\|+\|\varphi_-\|+\|\Omega_+\| \},
\end{displaymath}
where the infimum is taken over all $\varphi_\pm$ and $\Omega_+$ as above.
\end{definition}

We denote $\dsh^p:=\{\varphi: \varphi$ is a DSH $(p, p)$-current on $\PP^k$ such that $\|\varphi\|_\dsh\leq 1\}$. Let $W$ be a subset of $\PP^k$. Denote $\dsh^p(W):=\{\varphi\in\dsh^p: \supp \varphi\Subset W\}$. Here, notice that the supports of $\varphi_\pm$ and $\Omega_\pm$ are not necessarily in $W$ while $\supp \varphi\Subset W$. We define topology on $\dsh^p$ and $\dsh^p(W)$ simply to be the subspace topology from the space of currents on $\PP^k$. 


\begin{definition}
A current $S\in\cC_p$ is said to be PC if it can be extended to a linear form on $\dsh^{k-p}$ which is continuous with respect to the topology on $\dsh^{k-p}$. We write $\langle S, \Phi \rangle$ for the value of this linear form at $\Phi\in\dsh^{k-p}$.

A current $S\in\cC_p$ is said to be PB if there is a constant $c_S>0$ such that
\begin{displaymath}
|\langle S, \varphi\rangle|\leq c_S\|\varphi\|_\dsh
\end{displaymath}
for smooth real forms $\varphi$ of bidegree $(k-p, k-p)$.
\end{definition}

Now, we define their local versions. Let $W$ be an open subset of $\PP^k$ and $E$ a proper analytic subset of $\PP^k$.
\begin{definition}
A current $S\in\cC_p$ is said to be PC in $W$ if $S$ induces a linear form on $\dsh^{k-p}(W)$ which is continuous with respect to the topology on $\dsh^{k-p}(W)$. A current $S\in\cC_p$ is said to be PC near $E$ if there exists an open neighborhood $W_E$ of $E$ such that $S$ is PC in $W_E$.
\end{definition}

\begin{definition}
A current $S\in\cC_p$ is said to be PB in $W$ if there exists a constant $C_{S, W}>0$ such that
\begin{displaymath}
|\langle S, \varphi \rangle|\leq C_{S, W}\|\varphi\|_\dsh
\end{displaymath}
for smooth real forms $\varphi$ of bidegree $(k-p, k-p)$ whose support is relatively compact in $W$. A current $S\in \cC_p$ is said to be PB near $E$ if there exists a neighborhood $W_E$ of $E$ such that $S$ is PB in $W_E$.
\end{definition}

\begin{proposition}\label{prop:PCPB}
If $S\in\cC_p$ is PC in $W$, then for any open subset $W'\Subset W$, it is PB in $W'$. In particular, if $E$ is an analytic subset of $\PP^k$ and a current $S\in \cC_p$ is PC near $E$, then it is PB near $E$.
\end{proposition}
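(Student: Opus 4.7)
The plan is to reduce the PB estimate to a compactness/continuity argument: exhibit a weakly compact subset $K\subset\dsh^{k-p}(W)$ that contains every normalized test form supported in $W'$, use continuity of $S$ on $\dsh^{k-p}(W)$ to bound it uniformly on $K$, and then renormalize.

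First, I would set
\[
K := \{\varphi\in\dsh^{k-p} : \supp\varphi\subset\overline{W'}\}.
\]
Since $\overline{W'}\Subset W$ we have $K\subset\dsh^{k-p}(W)$. The main step is to show $K$ is compact in the weak topology of currents. Given a sequence $\varphi_n\in K$, choose decompositions $\varphi_n=(\varphi_n)_+-(\varphi_n)_-$ with $(\varphi_n)_\pm$ negative $(k-p,k-p)$-currents and $dd^c\varphi_n=(\Omega_n)_+-(\Omega_n)_-$ with $(\Omega_n)_\pm$ positive closed, almost realizing the DSH-norm, so that $\|(\varphi_n)_+\|+\|(\varphi_n)_-\|+\|(\Omega_n)_+\|\leq 1+1/n$. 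By Stokes' theorem on $\PP^k$ (no boundary, $d\omega=0$), $\int dd^c\varphi_n\wedge\omega^{p-1}=0$, hence $\|(\Omega_n)_-\|=\|(\Omega_n)_+\|$, so all four sequences have uniformly bounded mass. Standard weak compactness for currents of bounded mass yields a subsequence converging to negative $\varphi_\pm$ and positive closed $\Omega_\pm$. The limit $\varphi=\varphi_+-\varphi_-$ satisfies $dd^c\varphi=\Omega_+-\Omega_-$; since each approximant is supported in the closed set $\overline{W'}$, so is $\varphi$; and the masses pass to the limit against the smooth forms $\omega^p,\omega^{p-1}$, preserving the DSH-norm bound. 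Hence $\varphi\in K$.

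Once $K$ is weakly compact, continuity of the linear form $S$ on $\dsh^{k-p}(W)$ yields a constant $C_{S,W'}>0$ with $|\langle S,\varphi\rangle|\leq C_{S,W'}$ for every $\varphi\in K$. For any smooth real $(k-p,k-p)$-form $\varphi$ with $\supp\varphi\Subset W'$ and $\|\varphi\|_\dsh>0$, the current $\varphi/\|\varphi\|_\dsh$ lies in $K$, whence $|\langle S,\varphi\rangle|\leq C_{S,W'}\|\varphi\|_\dsh$ (the case $\|\varphi\|_\dsh=0$ being trivial). The \emph{In particular} assertion then follows by choosing any open neighborhood $W'_E\Subset W_E$ of the closed analytic set $E$ and applying the first part with $W=W_E$ and $W'=W'_E$.

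The hard part is the weak compactness of $K$: one must check carefully that a near-optimal DSH decomposition passes to a weak subsequential limit with controlled norm and correct support. The Stokes balance $\|(\Omega_n)_+\|=\|(\Omega_n)_-\|$ is what ties the four mass sequences together, and the standard weak compactness of positive (closed) currents of bounded mass then does the rest.
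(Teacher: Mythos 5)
Your proof is correct. The paper omits the argument, deferring to Proposition 2.2 of Dinh--Sibony \cite{DS2004}; that proof uses the same strategy as yours, namely weak compactness of the unit DSH ball with compactly contained support (via near-optimal decompositions, the Stokes balance $\|\Omega_+\|=\|\Omega_-\|$, and weak limits of positive/negative currents of bounded mass) followed by boundedness of a continuous function on a compact metrizable set and rescaling.
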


The proof is similar to that of Proposition 2.2 in \cite{DS2004} and so we omit it.

\section{Super-potentials}\label{sec:super-potentials}

\subsection{Preliminaries} The theory of super-potentials can be thought of as an analogue of the pluripotential theory of positive closed $(1, 1)$-currents for positive closed $(p, p)$-currents with $1<p<k$ and was introduced in Dinh-Sibony (\cite{DS2009}). We list some relevant definitions and properties. For details, see Dinh-Sibony (\cite{DS2009}).

\begin{definition}[See Section 3 in \cite{DS2009}]
Let $S$ be a smooth form in $\cC_p$ and $m$ a fixed real number. Then, the super-potential $\cU_S$ of $S$ of mean $m$ is the function on $\cC_{k-p+1}$ defined by
\begin{displaymath}
\cU_S(R):=\langle S, U_R\rangle
\end{displaymath}
where $U_R$ is a quasi-potential of $R$ of mean $m$, that is, a $(k-p, k-p)$-current $U_R$ such that $dd^cU_R=R-\omega^{k-p+1}$ and $\langle U_R, \omega^p\rangle=m$.

For general $S\in\cC_p$, we define
\begin{displaymath}
\cU_S(R):=\lim_{\theta\to 0}\cU_{S_\theta}(R)
\end{displaymath}
where the subscript $\theta$ means the $\theta$-regularization of $S$.
\end{definition}

\begin{remark} The super-potential does not depend on the choice of the quasi-potential $U_R$ as long as it is of mean $m$. 
\end{remark}

%


Among various quasi-potentials, there is a good one, named the Green quasi-potential, from the computational perspective. It is given by an integral formula. We will call this kernel the Green quasi-potential kernel. 
\begin{proposition}[See Proposition 2.3.2 in ~\cite{DS2009}]\label{prop:kernel}
Consider $X:=\PP^k\times\PP^k$ and $D$ the diagonal of $X$. Let $\Omega(z, \xi):=\sum_{j=0}^k\omega(z)^j\wedge\omega(\xi)^{k-j}$, where $(z, \xi)$ denotes the homogeneous coordinates of $\PP^k\times\PP^k$ with $z=[z_0: ...: z_k]$ and $\xi=[\xi_0: ...: \xi_k]$. Then, there is a negative $(k-1, k-1)$-form $K$ on $X$ smooth outside $D$ such that $dd^cK=[D]-\Omega$ which satisfies the following inequalities near $D$:
\begin{displaymath}
\|K(\cdot)\|_{pt}\lesssim -\dist(\cdot, D)^{2-2k}\log \dist(\cdot, D)\,\,\,\,\,\textrm{ and }\,\,\,\,\,\|\nabla K(\cdot)\|_{pt}'\lesssim \dist(\cdot, D)^{1-2k}.
\end{displaymath}
Moreover, there are a negative $\dsh$ function $\eta$ and a positive closed $(k-1, k-1)$-form $\Theta$ smooth outside $D$ such that $K\geq \eta\Theta$, $\|\Theta(\cdot)\|_{pt}\lesssim \dist(\cdot, D)^{2-2k}$ and $\eta-\log\dist(\cdot, D)$ is bounded near $D$.
\end{proposition}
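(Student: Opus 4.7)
The plan is to construct $K$ by gluing an explicit local model near the diagonal $D$ to a smooth global correction, then symmetrizing. By the Künneth formula for $X = \PP^k \times \PP^k$, the diagonal class $[D]$ coincides with the class of $\Omega = \sum_{j=0}^{k} \omega(z)^j \wedge \omega(\xi)^{k-j}$ in $H^{k,k}(X,\CC)$; hence $[D] - \Omega$ is $dd^c$-exact as a current, and the task is to produce an explicit primitive with the claimed pointwise control and lower bound decomposition.

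For the local model, use the normal-bundle exponential map to introduce holomorphic coordinates $(z, w) \in \CC^k \times \CC^k$ near $D$ with $D = \{w = 0\}$ and $\|w\|$ comparable to $\dist(\cdot, D)$. Set
\[
K_{\mathrm{loc}}(z, w) := -\log\|w\|^2 \cdot \Theta_{\mathrm{loc}}(w), \quad \Theta_{\mathrm{loc}}(w) := (dd^c \log\|w\|^2)^{k-1}.
\]
The form $\Theta_{\mathrm{loc}}$ is positive closed on $\{w \neq 0\}$ and extends as a positive closed current to $\CC^k$ with $\|\Theta_{\mathrm{loc}}\|_{pt} \lesssim \|w\|^{2-2k}$, so that $\|K_{\mathrm{loc}}\|_{pt} \lesssim -\|w\|^{2-2k}\log\|w\|$ and $\|\nabla K_{\mathrm{loc}}\|_{pt}' \lesssim \|w\|^{1-2k}$, matching the claimed bounds. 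By the Lelong-Poincaré formula $(dd^c \log\|w\|^2)^k = c_k [\{w = 0\}]$, a direct computation shows that $dd^c K_{\mathrm{loc}}$ equals $-c_k[\{w=0\}]$ modulo a smooth form, yielding the desired local equation up to sign conventions.

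Multiply $K_{\mathrm{loc}}$ by a smooth cut-off $\chi$ supported in a tubular neighborhood of $D$, and subtract a sufficiently large smooth positive form so that the result is a globally negative $(k-1,k-1)$-current $\widetilde K$ on $X$ with $dd^c \widetilde K = [D] - \Psi$ for some smooth closed $(k,k)$-form $\Psi$ cohomologous to $\Omega$. The $\partial\bar\partial$-lemma on the compact Kähler manifold $X$ produces a smooth $(k-1,k-1)$-form $\sigma$ with $dd^c \sigma = \Omega - \Psi$; set $K := \widetilde K - \sigma$. Symmetry under $(z, \xi) \mapsto (\xi, z)$ is then enforced by averaging $K$ with its swap-pullback, which preserves all pointwise bounds and the $dd^c$ equation.

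For the lower bound $K \geq \eta \Theta$, take $\Theta$ to be a global positive closed $(k-1,k-1)$-current on $X$ smooth outside $D$ that agrees with $\Theta_{\mathrm{loc}}$ near $D$: construct it as $\pi_*(\widehat{\omega}^{k-1})$, where $\pi : \widetilde X \to X$ is the blow-up along $D$ and $\widehat{\omega}$ is a positive closed form on $\widetilde X$ restricting to a relative Kähler form on the exceptional $\PP^{k-1}$-bundle; this yields $\|\Theta\|_{pt} \lesssim \dist(\cdot, D)^{2-2k}$. Take $\eta := \log|s|_h^2 + O(1)$ where $s$ is a defining section of the exceptional divisor $E \subset \widetilde X$ and $h$ is a smooth Hermitian metric on $\mathcal{O}_{\widetilde X}(E)$; pushed down to $X$, the function $\eta$ is DSH (since $dd^c \eta$ is the difference of a smooth form and a positive closed current), negative after normalization, and satisfies $\eta - \log\dist(\cdot, D) = O(1)$. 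The inequality $K \geq \eta \Theta$ holds locally from the explicit formula for $K_{\mathrm{loc}}$ and globally after absorbing smooth corrections. The main obstacle is coordinating the cohomological corrections with the singularity estimates so that, simultaneously, $K$ is globally negative, symmetric, satisfies the exact equation $dd^c K = [D] - \Omega$, and admits the decomposition $K \geq \eta \Theta$ with $\Theta$ globally positive and closed; the cleanest resolution is to perform all constructions on the blow-up $\widetilde X$ using a Hermitian metric on $\mathcal{O}_{\widetilde X}(E)$ and then push down via $\pi_*$.
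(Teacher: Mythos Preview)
The paper does not prove this proposition; it is quoted verbatim as Proposition~2.3.2 of \cite{DS2009} and used as a black box. There is therefore no ``paper's own proof'' to compare against.

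That said, your sketch is broadly in the spirit of the original construction in \cite{DS2009}, which indeed works on the blow-up $\pi:\widetilde X\to X$ along $D$ and builds $K$ from a quasi-potential of the exceptional divisor. Two technical points are worth flagging. First, the normal-bundle exponential map does not in general yield \emph{holomorphic} coordinates $(z,w)$ with $D=\{w=0\}$; you only get a smooth identification, so the ``local model'' $K_{\mathrm{loc}}=-\log\|w\|^2\,(dd^c\log\|w\|^2)^{k-1}$ need not satisfy $dd^cK_{\mathrm{loc}}=[D]+\text{smooth}$ after transplanting. The cleanest fix is exactly the one you mention at the end: carry out the construction directly on $\widetilde X$ with a Hermitian metric on $\mathcal O_{\widetilde X}(E)$ and push down, rather than via a tubular chart. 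Second, ``subtract a sufficiently large smooth positive form so that the result is globally negative'' is delicate: a smooth $(k-1,k-1)$-form is in general not comparable to $\pm\omega^{k-1}$ pointwise, so one must check that the specific correction $\sigma$ produced by the $\partial\bar\partial$-lemma can be absorbed into the singular part without spoiling either negativity or the lower bound $K\ge\eta\Theta$. In \cite{DS2009} this is handled by the explicit form of the construction on the blow-up; your write-up leaves this step implicit.
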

Here, $\|K(\cdot)\|_{pt}$ is the sum $\sum_j |K_j|$ and  $\|\nabla K(\cdot)\|_{pt}'$ is the sum $\sum_j\|\nabla K_j\|$, where the $K_j$'s are the coefficients of $K$ for a fixed finite atlas of $X$. On $\PP^k\times\PP^k$, the distance, which is also denoted by ``$\dist$'', is measured with respect to the product metric of the standard Fubini-Study metric on each $\PP^k$.


\begin{definition}[See Section 3 in \cite{DS2009}]
The Green quasi-potential $U$ of $R\in\cC_p$ is the $(p-1, p-1)$-current on $\PP^k$ defined by
\begin{displaymath}
U(z):=\int_{\xi\neq z}R(\xi)\wedge K(z, \xi).
\end{displaymath}
\end{definition}

\begin{remark}\label{rem:Greenqpotential}
Note that $U$ depends on the choice of $K$.
Once $K$ is fixed, the mean (or, equivalently mass) $m$ of $U$ is bounded uniformly with respect to $R$. Note also that $U-m\omega^{p-1}$ is a quasi-potential of mean $0$ of $R$.
\end{remark}

One observation is that we can choose $K(z, \zeta)=K(\zeta, z)$ and we will use the following lemma.
\begin{lemma}[See Section 2.3 in \cite{DS2010-1}]\label{lem:doublecurrent}
Let $K$ be the Green quasi-potential kernel as above. Let $R$ be a $(p, p)$-current of order $0$ and $\varphi$ a smooth $(k-p+1, k-p+1)$-current on $\PP^k$. Then, we have
\begin{displaymath}
\left\langle \int_{\xi\neq z} R(\xi)\wedge K(z, \xi), \varphi(z) \right\rangle=\left\langle R(\xi), \int_{z\neq \xi} \varphi(z)\wedge K(z, \xi) \right\rangle.
\end{displaymath}
\end{lemma}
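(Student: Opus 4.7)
The plan is to view both iterated integrals as the single pairing of $K(z,\xi)$ against the tensor product current $R(\xi)\otimes\varphi(z)$ on $X=\PP^k\times\PP^k$, and to exchange the order of integration by a Fubini argument; the only genuine difficulty is the singularity of $K$ along the diagonal $D$.

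First I would regularize $K$ by a smooth cut-off. Let $\chi_\epsilon$ be a smooth cut-off on $X$ that vanishes on the $\epsilon$-tubular neighborhood of $D$ and equals $1$ outside the $2\epsilon$-neighborhood, and set $K_\epsilon:=\chi_\epsilon K$. Then $K_\epsilon$ is a smooth $(k-1,k-1)$-form on $X$, and classical Fubini gives
\[
\left\langle \int_{\xi} R(\xi)\wedge K_\epsilon(z,\xi),\,\varphi(z)\right\rangle
=\left\langle R(\xi),\,\int_{z}\varphi(z)\wedge K_\epsilon(z,\xi)\right\rangle,
\]
since both quantities coincide with the global pairing $\langle R(\xi)\otimes\varphi(z),\,K_\epsilon(z,\xi)\rangle$ on $X$.

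Next I would pass to the limit $\epsilon\to 0$. By Proposition \ref{prop:kernel}, $\|K(\cdot)\|_{pt}\lesssim -\dist(\cdot,D)^{2-2k}\log\dist(\cdot,D)$, so a standard tubular neighborhood computation (using that $D$ has real codimension $2k$ in the real $4k$-manifold $X$) yields
\[
\int_{D_{2\epsilon}}\|K(\cdot)\|_{pt}\,d\mathrm{vol}_X = O(\epsilon^2|\log\epsilon|)\to 0.
\]
Because $R$ is an order-$0$ current of finite mass and $\varphi$ is smooth, the coefficients of $R(\xi)\otimes\varphi(z)$ are finite measures on $X$; hence $K$ is absolutely integrable against $R\otimes\varphi$, and the difference $\langle R\otimes\varphi,\,K-K_\epsilon\rangle$ tends to $0$. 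The same bound shows that each iterated integral in the identity converges to its counterpart with $K_\epsilon$ replaced by $K$, yielding the claim.

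The main obstacle will be this last step: making sure that the iterated integrals involving the singular kernel $K$ are genuinely well-defined currents and that the exchange is continuous under the smoothing. To make this fully rigorous I would first prove the identity for the $\theta$-regularizations $R_\theta$ of Section \ref{sec:currents}, for which the inner integrals are smooth and Fubini is immediate; then use the uniform mass bound $\|R_\theta\|=\|R\|$ together with the local integrability of $\|K\|_{pt}$ to pass to the limit $\theta\to 0$. The asymmetric roles of $R$ and $\varphi$ are inessential, because the $L^1$ control of $K$ is symmetric in $(z,\xi)$, so the same estimates govern both sides.
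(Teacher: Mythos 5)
The paper does not prove this lemma; it is cited directly from Section 2.3 of Dinh--Sibony \cite{DS2010-1}, so there is no in-text proof to compare against.

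Your argument is essentially sound and is the natural way to establish such a Fubini-type identity: regularize $K$ by excising a tube around the diagonal, apply classical Fubini to the smooth kernel, and pass to the limit using the local integrability of $K$. The tube estimate $\int_{D_{2\epsilon}}\|K\|_{pt}\,d\mathrm{vol}_X=O(\epsilon^2|\log\epsilon|)$ is correct, since $D$ has real codimension $2k$ and $\|K\|_{pt}\lesssim r^{2-2k}|\log r|$ with $r=\dist(\cdot,D)$.

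One step deserves tightening. You write that because the coefficients of $R(\xi)\otimes\varphi(z)$ are finite measures, $K$ is \emph{hence} absolutely integrable against $R\otimes\varphi$. That implication is false in general: a locally $L^1$ function need not be integrable against an arbitrary finite measure (e.g.\ a Dirac mass at a singular point), and the volume-measure estimate $O(\epsilon^2|\log\epsilon|)$ on the tube does not by itself control $\int_{D_{2\epsilon}}\|K\|_{pt}\,d(|R|\otimes\mathrm{vol})$ when $R$ is singular. What makes it work is that the measure on $X$ is a \emph{product} of the arbitrary finite measure $|R|(\xi)$ with a bounded-density measure $|\varphi(z)|\,d\mathrm{vol}(z)$, so one should fiber over $\xi$: for each fixed $\xi$,
\begin{displaymath}
\int_{\dist(z,\xi)<2\epsilon}\|K(z,\xi)\|_{pt}\,d\mathrm{vol}(z)\lesssim\int_0^{2\epsilon} r^{2-2k}|\log r|\,r^{2k-1}\,dr=O(\epsilon^2|\log\epsilon|),
\end{displaymath}
uniformly in $\xi$, and then integrate this uniform bound against $d|R|(\xi)$. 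With that fix the absolute convergence (and hence the exchange of integrals, and the limit $\epsilon\to 0$) is justified. Your alternative route via the $\theta$-regularizations $R_\theta$ runs into the same issue in reverse: $\|R_\theta\|=\|R\|$ gives a uniform mass bound, but to pass to the limit $\theta\to 0$ you again need the uniform-in-one-variable $L^1$ control of $K$ against the smooth factor, not merely local integrability on $X$.
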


\begin{theorem}[See Theorem 2.3.1]\label{thm:Greenqpotential}
Let $R$ be a current in $\cC_p$. Then, its Green quasi-potential $U$ is negative, depends linearly on $R$ and satisfies that for every $r$ and $s$ with $1\leq r<k/(k-1)$ and $1\leq s<2k/(2k-1)$, one has
\begin{displaymath}
\|U\|_{\cL^r}\leq c_r\,\,\, \textrm{ and }\,\,\,\|dU\|_{\cL^s}\leq c_s
\end{displaymath}
for some positive constants $c_r$ and $c_s$ independent of $R$. Moreover, $U$ depends continuously on $R$ with respect to $\cL^r$-topology on $U$ and the weak topology on $R$.
\end{theorem}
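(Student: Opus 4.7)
The plan is to establish (i) negativity and linearity, (ii) the $\cL^r$ and $\cL^s$ bounds for smooth $R$, and (iii) the extension to arbitrary $R\in\cC_p$ together with the continuity statement. I expect step (iii) to be the genuinely delicate part.

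For (i), recall from Proposition \ref{prop:kernel} that $K$ is a negative $(k-1,k-1)$-form on $\PP^k\times\PP^k$ smooth off the diagonal, while $R$ is positive. Viewing $U$ as the pushforward $(\pi_1)_*(\pi_2^*R\wedge K)$, where $\pi_1,\pi_2$ are the two projections, positivity of $R$ and negativity of $K$ force $U$ to pair non-positively against every positive test form, so $U\leq 0$. Linearity in $R$ is immediate from the defining integral.

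For (ii), assume first that $R$ is smooth and let $\sigma_R:=R\wedge\omega^{k-p}$ be its trace measure, of total mass $\|R\|=1$. In a fixed finite atlas the coefficients of $U(z)$ are pointwise estimated by $|U(z)|\lesssim \int \|K(z,\xi)\|_{pt}\,d\sigma_R(\xi)$, since the coefficients of a positive current are dominated by its trace measure. The generalized Minkowski inequality then yields
\[
\|U\|_{\cL^r(dz)}\lesssim \int \|K(\cdot,\xi)\|_{\cL^r(dz)}\,d\sigma_R(\xi).
\]
Proposition \ref{prop:kernel} gives $\|K(z,\xi)\|_{pt}\lesssim -\dist(z,\xi)^{2-2k}\log\dist(z,\xi)$, and a direct integration shows that this right-hand side lies in $\cL^r(dz)$ uniformly in $\xi$ precisely when $r(2k-2)<2k$, i.e.\ $r<k/(k-1)$. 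This produces $\|U\|_{\cL^r}\leq c_r$. The identical Minkowski argument, now with the gradient bound $\|\nabla K(z,\xi)\|_{pt}'\lesssim \dist(z,\xi)^{1-2k}$ and differentiation under the integral sign, produces $\|dU\|_{\cL^s}\leq c_s$ whenever $s(2k-1)<2k$, that is $s<2k/(2k-1)$.

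For (iii), take a general $R\in\cC_p$ and its regularizations $R_\theta$; the smooth forms $U_\theta$ given by step (ii) form a family uniformly bounded in $W^{1,s}$ for every $s<2k/(2k-1)$. Since $\PP^k$ is compact, Rellich--Kondrachov furnishes compact embeddings $W^{1,s}\hookrightarrow \cL^q$ for every $q<2ks/(2k-s)$, and the latter exponent tends to $k/(k-1)$ as $s\uparrow 2k/(2k-1)$, so the compactness sweeps out the full admissible range. To identify the limit, I invoke the symmetry of the kernel via Lemma \ref{lem:doublecurrent}: for every smooth test form $\varphi$,
\[
\langle U_\theta,\varphi\rangle=\Bigl\langle R_\theta(\xi),\int_{z\neq\xi}\varphi(z)\wedge K(z,\xi)\Bigr\rangle,
\]
and the inner integral is a continuous form in $\xi$ (the diagonal singularity is absorbed exactly as in the Minkowski estimate of step (ii)). Weak convergence $R_\theta\to R$ then forces $\langle U_\theta,\varphi\rangle\to \langle U,\varphi\rangle$, where $U$ is defined by the integral formula applied to $R$. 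Uniqueness of the distributional limit, combined with Sobolev compactness, upgrades this to strong $\cL^q$-convergence $U_\theta\to U$, and the bounds in (ii) pass to the limit. Applied verbatim to any weakly convergent sequence $R_n\to R$ in $\cC_p$, the same argument yields the continuity statement.

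The main obstacle is step (iii): one must verify that the integral defining $U$ for general $R$ is legitimate despite the diagonal singularity of $K$, and that the Sobolev exponent computation really covers the full range $r<k/(k-1)$. For the former, the decomposition $K\geq \eta\Theta$ from Proposition \ref{prop:kernel}, with $\eta$ a DSH function and $\Theta$ smooth off the diagonal, permits one to truncate $K$ near $D$ and pass to the limit using the $\dsh$-control of $\eta$; for the latter, the Sobolev exponent $2ks/(2k-s)$ hits the endpoint $k/(k-1)$ exactly when $s=2k/(2k-1)$, so letting $s\uparrow 2k/(2k-1)$ exhausts every admissible $r$.
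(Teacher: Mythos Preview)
The paper does not supply its own proof of this theorem: the statement is quoted verbatim as Theorem~2.3.1 of \cite{DS2009}, with no \texttt{proof} environment following it. There is therefore nothing in the present paper to compare your argument against.

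That said, your sketch is a faithful reconstruction of the standard argument, and indeed of the proof in \cite{DS2009}. The kernel estimates from Proposition~\ref{prop:kernel} plus the Minkowski inequality over the trace measure of $R$ give the $\cL^r$ and $\cL^s$ bounds directly; negativity and linearity are immediate. One remark: your route through smooth $R$ first and then Rellich--Kondrachov is correct but slightly roundabout for the \emph{bounds} themselves, since the Minkowski estimate in step~(ii) already works for arbitrary $R\in\cC_p$ (the coefficients of a positive current are dominated by its trace measure, which is a probability measure regardless of smoothness). Where the regularization/compactness genuinely earns its keep is in the \emph{continuity} statement $R\mapsto U_R$, and there your identification of the weak limit via the kernel symmetry (Lemma~\ref{lem:doublecurrent}) together with the Sobolev exponent computation $2ks/(2k-s)\uparrow k/(k-1)$ is exactly the right mechanism.
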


%

The following theorem is about the regularity of the Green super-potentials of order $p$, that is, the super-potentials of the Green $(p,p)$-current $T^p$.
\begin{theorem}[See Theorem 5.4.1 in ~\cite{DS2009}]\label{thm:541}
Let $f:\PP^k\to\PP^k$ be a holomorphic map of algebraic degree $d\geq 2$. Then, the Green super-potentials of order $p$ of $f$ are H\"{o}lder continuous on $\cC_{k-p+1}$ with respect to the distance $d_{\cC_{k-p+1}}$ which induces the weak topology on $\cC_{k-p+1}$. 
\end{theorem}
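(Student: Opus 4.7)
The plan is to derive a functional equation for $\cU_{T^p}$ from the invariance $f^*T^p=d^pT^p$ and combine it with $\theta$-regularization to run a balance-of-errors argument. For smooth $R\in\cC_{k-p+1}$ with Green quasi-potential $U_R$, adjunction gives $\cU_{T^p}(R)=d^{-p}\langle T^p,f_*U_R\rangle$. Since $\|f_*\omega^{k-p+1}\|=d^{p-1}$, the operator $\Lambda:=d^{-(p-1)}f_*$ preserves $\cC_{k-p+1}$, and a direct computation shows that $d^{-(p-1)}f_*U_R+V_\alpha$ is a quasi-potential of $\Lambda R$, where $V_\alpha$ is any quasi-potential of $\alpha:=\Lambda\omega^{k-p+1}\in\cC_{k-p+1}$. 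Substituting yields the functional equation
$$\cU_{T^p}(R)=d^{-1}\cU_{T^p}(\Lambda R)+c$$
for a constant $c=-d^{-1}\langle T^p,V_\alpha\rangle$ depending only on $f$. Subtracting the analogous identity for $R'$ and iterating gives
$$\cU_{T^p}(R)-\cU_{T^p}(R')=d^{-n}\bigl(\cU_{T^p}(\Lambda^nR)-\cU_{T^p}(\Lambda^nR')\bigr)$$
for every $n\geq 0$.

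For smooth currents $R_1,R_2\in\cC_{k-p+1}$, the kernel bound of Proposition \ref{prop:kernel} combined with Lemma \ref{lem:doublecurrent} yields the Lipschitz estimate $|\cU_{T^p}(R_1)-\cU_{T^p}(R_2)|\leq C\|R_1-R_2\|_{\cC^0}$. For arbitrary $R,R'\in\cC_{k-p+1}$ at weak distance $\delta:=d_{\cC_{k-p+1}}(R,R')$, Proposition \ref{prop:regCurrents} gives $\|R_\theta-R'_\theta\|_{\cC^0}\leq C|\theta|^{-A}\delta$ for an explicit exponent $A=A(k)>0$. Combining via the triangle inequality,
$$|\cU_{T^p}(R)-\cU_{T^p}(R')|\leq 2E(\theta)+C|\theta|^{-A}\delta,$$
where $E(\theta):=\sup_{R\in\cC_{k-p+1}}|\cU_{T^p}(R)-\cU_{T^p}(R_\theta)|$ is a uniform smoothing error that still needs to be controlled.

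The heart of the argument is to show $E(\theta)\leq C|\theta|^\beta$ for some $\beta>0$. Applying the iterated functional equation to $R$ and $R_\theta$,
$$|\cU_{T^p}(R)-\cU_{T^p}(R_\theta)|=d^{-n}|\cU_{T^p}(\Lambda^nR)-\cU_{T^p}(\Lambda^nR_\theta)|,$$
I would expand the right-hand side via Lemma \ref{lem:doublecurrent}, transferring the action to the Green kernel applied to $\Lambda^nR-\Lambda^nR_\theta$ paired against $T^p$. Using Proposition \ref{prop:regCurrents} together with a pointwise estimate for $f^n_*$ introduces a loss factor of order $d^{nB}$ for some explicit $B=B(k,p)>0$, producing a bound $Cd^{nB}|\theta|^\alpha$. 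Choosing $n\sim\lambda|\log|\theta||$ with sufficiently small $\lambda>0$ balances the $d^{-n}$ gain from the functional equation against this loss, yielding $E(\theta)\leq C|\theta|^\beta$ for some $\beta>0$ depending only on $k,p,d$.

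Inserting this into the triangle inequality and optimizing $|\theta|\sim\delta^{1/(A+\beta)}$ gives the Hölder estimate $|\cU_{T^p}(R)-\cU_{T^p}(R')|\leq C\delta^{\beta/(A+\beta)}$. The main obstacle I anticipate is the uniform-in-$R$ control of $E(\theta)$: since $\cU_{T^p}$ is genuinely unbounded on $\cC_{k-p+1}$, the iteration cannot merely exploit a crude boundedness of the super-potential after $n$ steps, and one must use the moderate nature of $T^p$—controlled singularities of $U_{\Lambda^nR}$ along the critical values $\Psi_n$ of $f^n$ together with the symmetry of the Green quasi-potential kernel (Lemma \ref{lem:doublecurrent})—to keep the loss factor only polynomial in $d^n$.
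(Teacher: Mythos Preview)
The paper does not prove this theorem; it is quoted from \cite{DS2009} and used as input. Your proposal, however, has a genuine gap in the functional equation. You claim $\cU_{T^p}(R)=d^{-1}\cU_{T^p}(\Lambda R)+c$ with $c$ independent of $R$, but the quasi-potential $d^{-(p-1)}f_*U_R+V_\alpha$ of $\Lambda R$ does not have the prescribed mean: its mean contains the term $d^{-(p-1)}\langle U_R,f^*\omega^p\rangle$, which depends on $R$. Writing $L:=d^{-p}f^*$, the correct identity is
\[
\cU_{T^p}(R)=d^{-1}\cU_{T^p}(\Lambda R)+\cU_{L\omega^p}(R)+c',
\]
so the iterated difference acquires the extra sum $\sum_{j=0}^{n-1}d^{-j}\bigl(\cU_{L\omega^p}(\Lambda^jR)-\cU_{L\omega^p}(\Lambda^jR')\bigr)$. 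Your clean identity is in fact impossible: $\cU_{T^p}$ is \emph{bounded} on $\cC_{k-p+1}$ (used repeatedly in this paper, e.g.\ the constant $C_{T^p}$ in Section~\ref{sec:proof1}), and combined with your identity this would give $|\cU_{T^p}(R)-\cU_{T^p}(R')|\le 2C_{T^p}d^{-n}$ for every $n$, forcing $\cU_{T^p}$ to be constant.

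This also inverts your diagnosis of the ``main obstacle'': $\cU_{T^p}$ is not unbounded. In \cite{DS2009} one first establishes boundedness---precisely from the corrected functional equation, since $\cU_{L\omega^p}$ is uniformly bounded ($L\omega^p$ being smooth) and the series $\sum d^{-j}$ converges---and then combines this boundedness with the regularization estimates (your Proposition~\ref{prop:regCurrents} step) to obtain H\"older continuity. No analysis of singularities along the critical set $\Psi_n$ is needed for this particular theorem.
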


\subsection{Continuous/Bounded super-potentials near an analytic subset}


We introduce notions about local regularity of super-potentials. For the norm $\|\cdot\|_\var$ below and related topology, see \cite{DS2010-1}.
\begin{definition}\label{def:bounded}
Let $S\in\cC_p$ and $W$ an open subset of $\PP^k$. The super-potential $\cU_S$ of $S$ of mean $m$ is said to be bounded in $W$ if there exists a constant $C_S>0$ such that for any smooth positive closed $(k-p+1, k-p+1)$-currents $R_+$ and $R_-$ of the same mass on $\PP^k$ with $\supp (R_+-R_-)\Subset W$, we have
\begin{displaymath}
|\cU_S(R_+-R_-)|=|\cU_S(R_+)-\cU_S(R_-)|\leq C_S\|R_+\|.
\end{displaymath}
Let $E$ be an analytic subset of $\PP^k$. We say that $\cU_S$ is bounded near $E$ if there exists a neighborhood $W_E$ of $E$ such that $\cU_S$ is bounded in $W_E$. 
\end{definition}

\begin{remark}
The bound $C_S$ in Definition \ref{def:bounded} is independent of the mean of $\cU_S$. Observe that the supports of $R_\pm$ are not necessarily in $W$.
\end{remark}

Let $1\leq l\leq k$ and $W$ an open subset of $\PP^k$. Let $\mathfrak{D}_\var^l(W)$ be the space of closed $(l, l)$-currents $R$ on $\PP^k$ such that $R$ can be written as $R=R_+-R_-$ and $\supp R\Subset W$ where $R_\pm$ are positive closed $(l, l)$-currents on $\PP^k$ of the same mass. Here, we do not require the supports of $R_\pm$ to be in $W$. Similarly to $\dsh^l$ and $\dsh^l(W)$, we can define a norm $\|R\|_\var=\inf\{\|R_+\|:R=R_+-R_-\}$ on $\mathfrak{D}_\var^l(W)$. 

Let $\mathfrak{D}^l(W):=\{R\in \mathfrak{D}_\var^l(W): \|R\|_\var\leq 1\}$. The topology on this space is the subspace topology of the space of currents. The norm $\|\cdot\|_\var$ bounds the mass norm. So, $\mathfrak{D}^l(W)$ is metrizable.

\begin{definition}\label{def:conti}
Let $S\in\cC_p$ and $W$ an open subset of $\PP^k$. The super-potential $\cU_S$ of $S$ of mean $m$ is said to be continuous in $W$ if $\cU_S$ induces a linear function on $\mathfrak{D}^{k-p+1}(W)$ continuous with respect to the topology of $\mathfrak{D}^{k-p+1}(W)$. Let $E$ be an analytic subset of $\PP^k$. We say that $\cU_S$ is continuous near $E$ if there exists a neighborhood $W_E$ of $E$ such that $\cU_S$ is continuous in $W_E$.
\end{definition}

\begin{remark}
When $W=\PP^k$, Definition \ref{def:bounded} and Definition \ref{def:conti} coincide with the notions of bounded/continuous super-potentials in \cite{DS2009}. 
\end{remark}

\begin{remark}
From the definition, it is not difficult to see that if $S\leq S'$ in the sense of currents and if $\cU_{S'}$ is bounded near $E$, then so is $\cU_S$. Also, since super-potentials are upper-semicontinuous (usc for short) on $\PP^k$, the same is true for continuity.
\end{remark}

Let $E$ be an analytic subset of $\PP^k$. We first compare currents being PC/PB near $E$ and currents admitting super-potentials continuous/bounded near $E$.

\begin{proposition}\label{prop:equivalencePB}
Let $S\in\cC_p$ and $E$ an analytic subset of $\PP^k$. The super-potential $\cU_S$ of $S$ of mean $m$ is bounded near $E$ if and only if $S$ is PB near $E$. 
\end{proposition}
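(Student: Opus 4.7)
Both implications rest on the integral representation
\[
\cU_S(\Omega_+)-\cU_S(\Omega_-)=\langle S,\,U_+ - U_-\rangle,
\]
where $U_\pm$ are Green quasi-potentials of $\Omega_\pm$ chosen to have the same mean. I will prove the two directions separately, using Bott-Chern/$\partial\bar\partial$ type reasoning on $\PP^k$ in one direction and a cut-off localization of $U_+-U_-$ in the other.

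For ``$\cU_S$ bounded in $W_E \Rightarrow S$ PB in $W_E$'', given a smooth real $(k-p,k-p)$-form $\varphi$ with $\supp\varphi\Subset W_E$, I would decompose $dd^c\varphi=\Omega_+-\Omega_-$ with smooth positive closed currents of the same mass and $\|\Omega_+\|\le\|\varphi\|_\dsh+\epsilon$; the support condition $\supp(\Omega_+-\Omega_-)\subseteq\supp\varphi\Subset W_E$ makes the hypothesis applicable. Letting $U_\pm$ be Green quasi-potentials of $\Omega_\pm$ of the same mean, the form $\eta:=\varphi-(U_+-U_-)$ is $dd^c$-closed. On $\PP^k$ every real $dd^c$-closed $(k-p,k-p)$-form can be written as $c\,\omega^{k-p}+\partial\alpha+\bar\partial\bar\alpha$, and because $S$ is closed, integration by parts gives $\langle S,\partial\alpha+\bar\partial\bar\alpha\rangle=0$, so $\langle S,\eta\rangle=c\|S\|=c$. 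Pairing $\omega^p$ against $\eta$ determines $c=\int\varphi\wedge\omega^p$ (the means of $U_\pm$ cancel), which is bounded by $\|\varphi\|_\dsh$. Combining,
\[
|\langle S,\varphi\rangle|\;\le\;|\cU_S(\Omega_+-\Omega_-)|+|c|\;\le\;(C_S+1)\|\varphi\|_\dsh,
\]
which is the PB inequality on $W_E$.

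For the reverse ``$S$ PB in $W_E \Rightarrow \cU_S$ bounded in some $W_E'\Subset W_E$'', I pick nested neighborhoods $W_E'\Subset W_E''\Subset W_E$ and a smooth cutoff $\chi$ with $\chi\equiv 1$ on $W_E''$, $\supp\chi\Subset W_E$, and for smooth positive closed $R_\pm$ of common mass $M$ with $\supp(R_+-R_-)\Subset W_E'$, I split
\[
\cU_S(R_+-R_-)=\langle S,\chi(U_+-U_-)\rangle+\langle S,(1-\chi)(U_+-U_-)\rangle.
\]
For the exterior piece, $z\in\PP^k\setminus W_E''$ is at positive distance from $\xi\in\supp(R_+-R_-)\subseteq W_E'$, so by Proposition \ref{prop:kernel} the kernel $K(z,\xi)$ is jointly smooth and bounded on this set; hence $(1-\chi)(U_+-U_-)$ is a smooth form with sup-norm $O(M)$, and its pairing with $S$ is $O(M)$. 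For the interior piece, $\chi(U_+-U_-)$ is DSH with support in $W_E$: the negative parts $\chi U_\pm$ have mass $\lesssim M$ by Theorem \ref{thm:Greenqpotential}, while $dd^c[\chi(U_+-U_-)]=(R_+-R_-)+E$ with $E$ supported in the annulus $W_E\setminus W_E''$, where $U_+-U_-$ and its first derivatives are again smooth with $O(M)$-sup-norm, giving $\|E\|_\infty\lesssim M$; decomposing $E=(E+c_1M\omega^{k-p+1})-c_1M\omega^{k-p+1}$ for large $c_1$ realizes $E$ as a difference of positive closed currents of mass $\lesssim M$, so $\|\chi(U_+-U_-)\|_\dsh\lesssim M$. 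After a mild $\theta$-regularization (which preserves both the support in $W_E$ and the DSH-norm bound), PB yields $|\langle S,\chi(U_+-U_-)\rangle|\lesssim M$; summing, $|\cU_S(R_+-R_-)|\lesssim M$.

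\textbf{Main obstacle.} The delicate point is the sup-norm control $\|E\|_\infty\lesssim M$ for the error arising from commuting $dd^c$ with $\chi$ in the second direction: this requires that $U_+-U_-$ and its first-order derivatives be smooth and of size $O(M)$ on the annulus $\supp(d\chi)$, which follows from the kernel estimates $\|K(\cdot)\|_{pt}\lesssim\dist^{2-2k}|\log\dist|$ and $\|\nabla K(\cdot)\|_{pt}'\lesssim\dist^{1-2k}$ together with the positive distance between $\supp(R_+-R_-)\Subset W_E'$ and $\PP^k\setminus W_E''$. Once this $L^\infty$-bound is established, the positive-closed decomposition of $E$ on $\PP^k$ by adding a multiple of $\omega^{k-p+1}$, the extension of PB from smooth forms to $\chi(U_+-U_-)$ by regularization, and the Stokes-type computation of $c$ in the first direction are all routine.
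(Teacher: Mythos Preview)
Your argument is correct, and for the reverse implication ($S$ PB $\Rightarrow$ $\cU_S$ bounded) it is essentially the paper's proof: the paper also picks nested neighborhoods $W_1\Subset W_2\Subset W_3$, a cutoff $\chi$ with $\chi\equiv 1$ on $W_2$ and $\supp\chi\Subset W_3$, shows that $\chi(U_{R_+}-U_{R_-})$ is DSH with norm $\lesssim\|R_+\|$ via the kernel estimates of Proposition~\ref{prop:kernel} on the annulus, and handles $(1-\chi)(U_{R_+}-U_{R_-})$ by the uniform sup bound coming from $\dist(\supp R,\supp d\chi)>0$. One cosmetic difference is that the paper introduces an auxiliary cutoff $\chi_R$ with $\chi_R\equiv 1$ on $\supp R$ to rewrite $U_R=U'_{R_+}-U'_{R_-}$, which makes the negativity of the two pieces explicit; your use of $\chi U_\pm$ achieves the same.

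For the forward implication the paper takes a noticeably shorter route. Instead of decomposing $\varphi=(U_+-U_-)+\eta$ and invoking the $\partial\bar\partial$-lemma for $\eta$, it writes $S=\omega^p+dd^cU_S$ and integrates by parts directly:
\[
|\langle S,\varphi\rangle|\le |\langle dd^cU_S,\varphi\rangle|+|\langle\omega^p,\varphi\rangle|=|\cU_S(dd^c\varphi)|+|\langle\omega^p,\varphi\rangle|\le (C_S+1)\|\varphi\|_\dsh.
\]
This avoids two technical points that your approach needs: (i) that the Green quasi-potentials $U_\pm$ of the smooth $\Omega_\pm$ are continuous, so that the pairing $\langle S,U_+-U_-\rangle$ is defined and equals $\cU_S(\Omega_+-\Omega_-)$; and (ii) a regularization step to justify $\langle S,\partial\alpha+\bar\partial\bar\alpha\rangle=0$ when $\alpha$ is only a current. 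Both are true (point (i) follows from the integrability of $\dist^{2-2k}\log\dist$ in $2k$ real dimensions, and (ii) by smoothing $\eta$ and using that $\eta_\theta\to\eta$ uniformly), but the paper's one-line identity $\langle dd^cU_S,\varphi\rangle=\langle U_S,dd^c\varphi\rangle=\cU_S(dd^c\varphi)$ sidesteps them entirely.
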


\begin{proof} We first suppose that the super-potential $\cU_S$ of $S$ of mean $m$ is bounded near $E$. Let $W_E$ denote an open neighborhood of $E$ where $\cU_S$ is bounded. Let $\varphi$ be a smooth real $(k-p, k-p)$-form with $\supp \varphi \Subset W_E$. Then, we have
\begin{align*}
|\langle S, \varphi\rangle|&\leq |\langle S-\omega^p, \varphi\rangle|+|\langle \omega^p, \varphi\rangle|\leq |\langle dd^cU_S, \varphi\rangle|+|\langle \omega^p, \varphi\rangle|\\
&\leq |\cU_S(dd^c\varphi)|+\|\varphi\|_\dsh\leq (1+c)\|\varphi\|_\dsh
\end{align*}
for some constant $c>0$ independent of $\varphi$. The last line is due to the boundedness of $\cU_S$ in $W_E$.

We prove the converse. Let $W_1$, $W_2$ and $W_3$ be the $\epsilon$-neighborhood, the $2\epsilon$-neighborhood and the $3\epsilon$-neighborhood of $E$, respectively such that $S$ is PB in $W_3$. Fix a cut-off function $\chi:\PP^k\to[0,1]$ such that $\supp \chi\Subset W_3$ and $\chi\equiv 1$ on $W_2$. Let $R:=R_+-R_-$ be a current such that $\supp R\Subset W_1$ and $R_\pm$ are smooth positive closed $(k-p+1, k-p+1)$-currents of the same mass on $\PP^k$.

We claim that $\psi_R:=\chi (U_{R_+}-U_{R_-})$ is DSH and $\|\psi_R\|_\dsh$ is uniformly bounded independently of $R$ where $U_{R_\pm}$ are the Green quasi-potentials of $R_\pm$, respectively. Observe that $\supp \psi_R\Subset W_3$ and that since $R_\pm$ are smooth, $U_R:=\int K\wedge R$ is well-defined and equals $U_{R_+}-U_{R_-}$. Let $\chi_R:\PP^k\to[0, 1]$ is a smooth cut-off function such that $\chi_R\equiv 1$ on $\supp R$ and $\supp \chi_R\Subset W_1$. Then, for any $z\in W_3\setminus W_2$, we have
\begin{align*}
U_R(z)&=\int_{\xi\neq z}K(z, \xi)\wedge R(\xi)=\int_{\xi\neq z}K(z, \xi)\wedge \chi_R(\xi)R(\xi)\\
&=\int_{\xi\neq z}K(z, \xi)\wedge \chi_R(\xi)R_+(\xi)-\int_{\xi\neq z}K(z, \xi)\wedge \chi_R(\xi)R_-(\xi).
\end{align*}
Denote by
\begin{displaymath}
U'_{R_\pm}(z):=\int_{\xi\neq z}K(z, \xi)\wedge \chi_R(\xi)R_\pm(\xi).
\end{displaymath}
Then, $\psi_R=\chi(U'_{R_+}-U'_{R_-})$, $\chi U'_{R_\pm}$ are negative and since $\dist(W_3\setminus W_2, W_1)\geq \epsilon$, Proposition \ref{prop:kernel} implies that $\|U'_{R_\pm}\|_{\infty, W_3\setminus W_2}$ is bounded by $c\|R_+\|$ where $c>0$ is a constant independent of $R$ and $R_\pm$. In the same way, $\|U'_{R_\pm}\|_{\cC^1, W_3\setminus W_2}$ is bounded by $c'\|R_+\|$ where $c'>0$ is a constant independent of $R$ and $R_\pm$. So, since
\begin{align*}
dd^c \psi_R= \chi (R_+-R_-) +dd^c\chi\wedge U_R-d^c\chi\wedge dU_R+d\chi \wedge d^cU_R,
\end{align*}
we know that $dd^c (\chi U_R) + R_-+M\|R_+\|\omega^{k-p+1}$ is positive closed for some $M>0$ independent of $R$ and $R_\pm$. Hence, the current $\psi_R$ is DSH as we have Remark \ref{rem:Greenqpotential} and its DSH norm is bounded by $c_S\|R\|_\var$ for some $c_S>0$ independent of $R$. Our hypothesis of $S$ being PB in $W_3$ implies that $|\langle S, \chi U_R\rangle|\leq c'_S\|R\|_\var$ for some $c'_S>0$.

We consider $\langle S, (1-\chi) U_R\rangle$. By the same argument as above, we can show that $(1-\chi)U_R$ is smooth and its sup-norm is bounded by $c''_S\|R\|_\var$ where $c''_S>0$ is a constant independent of $R$. Hence, we have
\begin{displaymath}
-c'''_S\|R\|_\var\leq \langle S, \chi U_R\rangle +\langle S, (1-\chi)U_R\rangle\leq \langle S, U_R\rangle=\cU_S(R),
\end{displaymath}
as desired for some $c'''_S>0$ independent of $R$.
\end{proof}

Similarly, we can prove the following and together with Proposition \ref{prop:PCPB}, we obtain a corollary below:
\begin{proposition}\label{prop:PCPBcontibdd}
Let $S\in\cC_p$ and $E$ an analytic subset of $\PP^k$. The super-potential $\cU_S$ of $S$ of mean $m$ is continuous near $E$ if and only if $S$ is PC near $E$. 
\end{proposition}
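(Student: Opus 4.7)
The plan is to parallel the proof of Proposition~\ref{prop:equivalencePB}, upgrading the boundedness inequalities there into convergence statements in the appropriate topologies. For the forward direction, assume $\cU_S$ is continuous in a neighborhood $W_E$ of $E$ and fix $W_E' \Subset W_E$. For smooth $\varphi \in \dsh^{k-p}(W_E')$ the same identity used in Proposition~\ref{prop:equivalencePB},
\begin{displaymath}
\langle S, \varphi\rangle = \langle dd^c U_S + \omega^p, \varphi\rangle = \cU_S(dd^c\varphi) + \langle \omega^p, \varphi\rangle,
\end{displaymath}
holds. If $\varphi_n \to \varphi$ in $\dsh^{k-p}(W_E')$, the DSH decomposition $dd^c\varphi_n = \Omega_{+,n} - \Omega_{-,n}$ has $\|\Omega_{\pm,n}\|$ uniformly bounded by $\|\varphi_n\|_\dsh \leq 1$, with $\|\Omega_{+,n}\|=\|\Omega_{-,n}\|$ automatic on $\PP^k$ by Stokes. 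Since $\supp dd^c\varphi_n \subseteq \supp\varphi_n \Subset W_E'$, after rescaling we obtain $dd^c\varphi_n \to dd^c\varphi$ in $\mathfrak{D}^{k-p+1}(W_E)$; continuity of $\cU_S$ then yields $\cU_S(dd^c\varphi_n)\to\cU_S(dd^c\varphi)$, and hence $\langle S,\varphi_n\rangle\to\langle S,\varphi\rangle$, proving that $S$ extends continuously to $\dsh^{k-p}(W_E')$.

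For the converse, suppose $S$ is PC in $W_3$ and select $W_1\Subset W_2\Subset W_3$ neighborhoods of $E$ together with a cut-off $\chi$ with $\chi\equiv 1$ on $W_2$ and $\supp\chi\Subset W_3$, as in Proposition~\ref{prop:equivalencePB}. By density of smooth currents and the definition of $\cU_S$ via regularization, it suffices to check $\cU_S(R_n) \to \cU_S(R)$ when $R_n\to R$ weakly in $\mathfrak{D}^{k-p+1}(W_1)$ with smooth $R_n$. Choose nearly optimal decompositions $R_n=R_{+,n}-R_{-,n}$ with $R_{\pm,n}$ smooth, positive closed of equal mass close to $\|R_n\|_\var\leq 1$. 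By Banach-Alaoglu, pass to a subsequence so that $R_{\pm,n}\to R'_\pm$ weakly, with $R'_+-R'_-=R$; since $\cU_S$ on $\mathfrak{D}_\var^{k-p+1}$ is linear and depends only on the difference (by additivity of Green quasi-potentials and linearity of $\langle S,\cdot\rangle$), one has $\cU_S(R'_+-R'_-)=\cU_S(R)$, and it suffices to show $\cU_S(R_n)\to\cU_S(R'_+-R'_-)$ along this subsequence.

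Write $U_n:=U_{R_{+,n}}-U_{R_{-,n}}$ and $U':=U_{R'_+}-U_{R'_-}$ for the differences of Green quasi-potentials, and split $U_n=\chi U_n+(1-\chi)U_n$. The calculation in Proposition~\ref{prop:equivalencePB}'s converse shows $\chi U_n$ is DSH with DSH norm uniformly bounded and support in $W_3$; by Theorem~\ref{thm:Greenqpotential}, $U_{R_{\pm,n}}\to U_{R'_\pm}$ in $\cL^r$, so $\chi U_n\to\chi U'$ in the topology of $\dsh^{k-p}(W_3)$, and the PC hypothesis gives $\langle S,\chi U_n\rangle\to\langle S,\chi U'\rangle$. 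For the other piece, since $\dist(\supp(1-\chi),\overline{W_1})>0$ and the kernel $K(z,\xi)$ is $C^\infty$ with uniform derivative bounds away from the diagonal, weak convergence $R_{\pm,n}\to R'_\pm$ yields $C^\infty$-convergence $(1-\chi)U_n\to(1-\chi)U'$ on $\PP^k$, whence $\langle S,(1-\chi)U_n\rangle\to\langle S,(1-\chi)U'\rangle$. Summing the two pieces gives $\cU_S(R_n)\to\cU_S(R)$ along the subsequence, and a standard sub-subsequence argument upgrades this to convergence of the whole sequence.

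The main obstacle to watch is that convergence in $\mathfrak{D}^{k-p+1}(W_1)$ is only weak convergence of currents with a uniform variational bound, not separate weak convergence of the positive decompositions. Circumventing this via Banach-Alaoglu together with the decomposition-independence of $\cU_S$ on $\mathfrak{D}_\var^{k-p+1}$ is the essential technical point; the rest of the proof is a routine adaptation of Proposition~\ref{prop:equivalencePB}, with inequalities replaced throughout by convergence in the corresponding topologies and with the smoothness of $(1-\chi)U_n$ used to cleanly separate the PC hypothesis from the region where $S$ is unrestricted.
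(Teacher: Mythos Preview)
Your proof is correct and follows the paper's approach exactly: the paper omits the proof, saying only that it is proved ``similarly'' to Proposition~\ref{prop:equivalencePB}, and your adaptation---replacing each boundedness inequality there by the corresponding convergence statement---is precisely what is meant. One small imprecision worth fixing: the $C^\infty$-convergence of $(1-\chi)U_n$ should be argued from the weak convergence of $R_n$ (which is supported in $\overline{W_1}$) via the identity $U_n=\int K(\cdot,\xi)\wedge R_n(\xi)$, not from convergence of the individual $R_{\pm,n}$, since those need not be supported in $W_1$.
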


\begin{corollary}\label{cor:contitobdd}
If $\cU_S$ is continuous near $E$, then it is bounded near $E$.
\end{corollary}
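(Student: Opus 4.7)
The plan is to deduce this corollary by simply chaining together the three equivalences and implications that have just been established in the preceding subsection. No new analysis should be required beyond recognizing which results feed into which.

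First, I would invoke Proposition \ref{prop:PCPBcontibdd}, which states that $\cU_S$ being continuous near $E$ is equivalent to $S$ being PC near $E$. Hence the hypothesis immediately gives that $S \in \cC_p$ is PC near $E$, i.e. there is an open neighborhood $W_E$ of $E$ in which $S$ induces a continuous linear form on $\dsh^{k-p}(W_E)$.

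Next, I would apply Proposition \ref{prop:PCPB}: since $S$ is PC in the neighborhood $W_E$, and any analytic set $E$ admits an open neighborhood $W'_E \Subset W_E$ (for instance, shrink $W_E$ slightly, which is possible since $E$ is compact in $\PP^k$), $S$ is PB in $W'_E$. In particular $S$ is PB near $E$.

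Finally, Proposition \ref{prop:equivalencePB} tells us that $S$ being PB near $E$ is equivalent to $\cU_S$ being bounded near $E$. Combining the three steps yields the conclusion. There is no real obstacle here: the content is entirely packaged into the preceding propositions, and the corollary serves mainly to record the hierarchy \emph{continuous $\Rightarrow$ bounded} that mirrors the classical pointwise analogue and the global case treated in \cite{DS2009}.
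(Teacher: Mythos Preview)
Your proposal is correct and follows exactly the route the paper indicates: the corollary is obtained by combining Proposition~\ref{prop:PCPBcontibdd}, Proposition~\ref{prop:PCPB}, and Proposition~\ref{prop:equivalencePB}. The paper states this chain of implications in the sentence introducing Proposition~\ref{prop:PCPBcontibdd} and the corollary, without writing out a separate proof.
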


Let $\cC_l(W)$ denote the set of currents in $\cC_l$ whose support is relatively compact in $W$ for an open subset $W\subseteq \PP^k$.
It is natural to expect the following.
\begin{proposition}\label{prop:PBforpositiveclosed}
Let $S\in\cC_p$ and $\cU_S$ a super-potential of $S$ of mean $m$. If $\cU_S$ is bounded in $W$, then for any open subset $\widetilde{W}\Subset W$, $\cU_S$ is bounded on $\cC_{k-p+1}(\widetilde{W})$.
\end{proposition}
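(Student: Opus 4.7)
The plan is to $\theta$-regularize $R$, apply the hypothesis to differences $R_\theta-R_{\theta'}$, and pass to the limit via the upper-semicontinuity of $\cU_S$. The key observation is that the regularizations $R_\theta$ are smooth, positive, closed, and of unit mass, with supports shrinking to $\widetilde{W}$; one can therefore fit \emph{pairs} $R_\theta,R_{\theta'}$ into the hypothesis even though the original $R$ cannot, by itself, be written as a same-mass difference supported in $W$.

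First I fix an intermediate open set $W_1$ with $\widetilde{W}\Subset W_1\Subset W$ and choose $\theta_0>0$ small enough that $\supp R_\theta\subset W_1$ for every $R\in\cC_{k-p+1}(\widetilde{W})$ and every $\theta\in(0,\theta_0]$. This is possible because the automorphisms $\tau_{\theta\zeta}$ used in the definition of the regularization are $O(\theta)$-close to $\id$ uniformly in $\zeta\in\supp\rho$, so $\supp R_\theta\subset\widetilde{W}_{c\theta}$ for a universal constant $c$. For any $\theta,\theta'\in(0,\theta_0]$, the forms $R_\theta,R_{\theta'}$ are then smooth, positive, closed, of the same unit mass, and $\supp(R_\theta-R_{\theta'})\Subset W$. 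Applying the hypothesis that $\cU_S$ is bounded in $W$ yields
\begin{displaymath}
|\cU_S(R_\theta)-\cU_S(R_{\theta'})|\leq C_S,
\end{displaymath}
uniformly in $R$, $\theta$, and $\theta'$.

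Next, I fix one reference scale $\theta_*\in(0,\theta_0]$ and bound $|\cU_S(R_{\theta_*})|$ uniformly in $R$. By Proposition \ref{prop:regCurrents} with $\alpha=0$, $\|R_{\theta_*}\|_{\cC^0}\lesssim \theta_*^{-2k^2-4k}$ since $\|R\|=1$, and the integrability of the Green quasi-potential kernel in Proposition \ref{prop:kernel} then yields $\|U_{R_{\theta_*}}\|_\infty\lesssim \theta_*^{-2k^2-4k}$; pairing with $S\in\cC_p$ and absorbing the mean (Remark \ref{rem:Greenqpotential}) produces a uniform bound $|\cU_S(R_{\theta_*})|\leq C_0$. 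Combining this with the oscillation estimate and the upper-semicontinuity of $\cU_S$ on $\cC_{k-p+1}$ applied to $R_\theta\to R$ gives
\begin{displaymath}
\cU_S(R)\geq \limsup_{\theta\to 0}\cU_S(R_\theta)\geq \liminf_{\theta\to 0}\cU_S(R_\theta)\geq \cU_S(R_{\theta_*})-C_S\geq -C_0-C_S,
\end{displaymath}
while the matching upper bound $\cU_S(R)\leq |m|+|m_{\mathrm{Green}}|$ is immediate from the negativity of the Green quasi-potential (Theorem \ref{thm:Greenqpotential}), the positivity of $S$, and the boundedness of the Green mean.

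The main obstacle is that Definition \ref{def:bounded} requires $R_+$ and $R_-$ to have the same mass, which blocks writing a unit-mass $R\in\cC_{k-p+1}(\widetilde{W})$ directly as a same-mass difference supported in $W$: comparing $R$ with the natural smooth reference $\omega^{k-p+1}$ fails because $\supp(R-\omega^{k-p+1})$ is generically all of $\PP^k$. The regularization workaround compares $R_\theta$ to $R_{\theta'}$ (both depending on $R$), which does fit the hypothesis, but then one must produce a uniform bound on a reference value $\cU_S(R_{\theta_*})$; this is what the $\cC^0$-estimate for $R_{\theta_*}$ from Proposition \ref{prop:regCurrents} together with the integrability of $K$ delivers.
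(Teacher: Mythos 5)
Your proof is correct, and it takes a genuinely different route from the paper's. The paper localizes the Green quasi-potential: it multiplies $U_R$ by a cut-off $\chi$ supported in $W$, shows $\chi U_R$ is DSH with uniformly bounded $\|\cdot\|_\dsh$, feeds $dd^c(\chi U_R)$ into the hypothesis after a regularization step, and then pays for the tail $(1-\chi)U_R$ with a fixed multiple $M\omega^{k-p}$, exploiting that $U_R$ is uniformly bounded away from $\supp R$. You instead regularize the \emph{current} $R$ rather than its quasi-potential: the crucial observation is that $R_\theta$ stays compactly supported in $W$ for $\theta\le\theta_0$, and since the $\theta$-regularization preserves positivity, closedness, and mass, the pair $(R_\theta,R_{\theta'})$ fits Definition \ref{def:bounded} directly, with no $dd^c$ manipulation and no cut-off on the quasi-potential side. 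The reference value $\cU_S(R_{\theta_*})$ is then controlled by an elementary $\cC^0$-estimate on $R_{\theta_*}$ via Proposition \ref{prop:regCurrents} together with the $L^1$-integrability of the kernel $K$, instead of the paper's DSH-norm argument. This is cleaner and avoids the implicit regularization-of-$\Omega_\pm$ step the paper brushes off as ``standard.'' The only ingredient you should make explicit is the behaviour of $\cU_S$ under the $\theta$-regularization of $R$: you need $\cU_S(R)\ge\limsup_\theta\cU_S(R_\theta)$ (in fact $\cU_S(R_\theta)\downarrow\cU_S(R)$ as $|\theta|\to 0$); this is established in \cite{DS2009} in the course of constructing super-potentials, and is indeed exactly what makes the $\limsup\ge\liminf\ge -C_0-C_S$ chain close the argument. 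Your remark about why a direct comparison with $\omega^{k-p+1}$ fails — that $\supp(R-\omega^{k-p+1})$ is generically all of $\PP^k$ — correctly identifies the obstruction the regularization is designed to circumvent.
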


\begin{proof}
Let $W'$ be an open subset such that $\widetilde{W}\Subset W'\Subset W$. Let $\chi:\PP^k\to[0,1]$ be a smooth function such that $\supp \chi \Subset W$ and $\chi\equiv 1$ on $W'$. Let $R$ be a current in $\cC_{k-p+1}(\widetilde{W})$ and $U_R$ the Green quasi-potential of $R$. It is not difficult to see that $\chi U_R$ is DSH and $\|\chi U_R\|_\dsh$ is bounded for $R\in\cC_{k-p+1}(\widetilde{W})$. By the standard regularization argument for currents on $\PP^k$, it is not difficult to see that $\cU_S(dd^c(\chi U_R))$ is bounded independently of $R$ from our hypothesis. We also have
\begin{align*}
\cU_S(dd^c(\chi U_R))=\lim_{\theta\to 0}\cU_{S_\theta}(dd^c(\chi U_R))=\lim_{\theta\to 0}\langle S_\theta-\omega^p, \chi U_R \rangle.
\end{align*}
For sufficiently large $M$ independent of $\theta$ and $R$, we have
\begin{align*}
\langle S_\theta, \chi U_R \rangle-\langle S_\theta, M\omega^{k-p}\rangle\leq \langle S_\theta, U_R\rangle.
\end{align*}
Hence, we have
\begin{align*}
\cU_S(R)&=\lim_{\theta\to 0}\langle S_\theta, U_R \rangle\geq\lim_{\theta\to 0}(\langle S_\theta, \chi U_R \rangle-\langle S_\theta, M\omega^{k-p}\rangle)\\
&\geq\cU_S(dd^c(\chi U_R))+\langle \omega^p, \chi U_R\rangle-\langle S, M\omega^{k-p}\rangle.
\end{align*}
Since the mass of the Green quasi-potential is uniformly bounded and super-potentials are bounded above, the proposition is proved.
\end{proof}


\begin{corollary}
When $p=1$, Definition \ref{def:bounded} is equivalent to the local boundedness of quasi-potential \emph{functions} of the positive closed $(1, 1)$-current $S$ in the following sense: if a quasi-potential $u$ of $S$ is bounded in an open subset $W$ of $\PP^k$, then the super-potential $\cU_S$ of mean $m$ is bounded in $W$ and conversely, if $\cU_S$ is bounded in $W$, then for any open subset $\widetilde{W}\Subset W$, $u$ is bounded in $\widetilde{W}$.
\end{corollary}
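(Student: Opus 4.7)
The plan is to prove the two implications of the biconditional separately, using as the central tool the explicit identity
\begin{displaymath}
\cU_S(R) = \int u\,dR - \int u\,\omega^k + m
\end{displaymath}
valid for any smooth $R \in \cC_k$ when $\cU_S$ is the super-potential of $S$ of mean $m$. This follows from writing $\cU_S(R) = \lim_{\theta \to 0}\langle S_\theta, U_R\rangle$, integrating by parts against a smooth quasi-potential $u_\theta$ of $S_\theta$ (so that $\langle S_\theta, U_R\rangle = \langle u_\theta, R-\omega^k\rangle + \langle\omega, U_R\rangle$), and using that $u_\theta \to u$ in $L^1$ while $R$ and $\omega^k$ are smooth. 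Subtracting, for smooth $R_+, R_- \in \cC_k$ of equal mass I obtain $\cU_S(R_+) - \cU_S(R_-) = \int u\,d(R_+ - R_-)$, independently of $m$ and of the mean of $u$.

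For the forward direction, suppose $\|u\|_{\infty,W} < \infty$. Given smooth $R_\pm \in \cC_k$ of equal mass with $\supp(R_+-R_-)\Subset W$, I choose a cut-off $\chi$ with $\chi\equiv 1$ on this support and $\supp\chi \Subset W$, so that $\int u\,d(R_+-R_-) = \int \chi u\,d(R_+-R_-)$. The pointwise bound $|\chi u| \le \|u\|_{\infty,W}$ together with $\|R_+\| + \|R_-\| = 2\|R_+\|$ then yields $|\cU_S(R_+) - \cU_S(R_-)| \le 2\|u\|_{\infty,W}\|R_+\|$, which is exactly the definition of $\cU_S$ being bounded in $W$.

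For the converse, since $u$ is qpsh it is automatically bounded above on $\PP^k$, so only a uniform lower bound on $\widetilde W$ needs to be produced. I pick $\widetilde W \Subset W' \Subset W$ and fix a single smooth probability measure $\mu_0$ supported in $W'$. For each $x \in \widetilde W$ and each sufficiently small $\epsilon$, let $\mu_x^\epsilon$ be the smooth probability measure given in a coordinate chart around $x$ by the standard mollifier $\rho_\epsilon(\cdot-x)\,dV$, with $\supp \mu_x^\epsilon \Subset W'$. Since $\mu_x^\epsilon$ and $\mu_0$ have equal mass and $\supp(\mu_x^\epsilon - \mu_0) \Subset W$, the boundedness hypothesis combined with the identity above gives
\begin{displaymath}
\left|\int u\,d\mu_x^\epsilon - \int u\,d\mu_0\right| \le C_S,
\end{displaymath}
so the mollified value $\int u\,d\mu_x^\epsilon = (u*\rho_\epsilon)(x)$ (computed in local coordinates) is uniformly bounded below on $\widetilde W$ for all small $\epsilon$.

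The last step transfers this averaged bound to a pointwise lower bound on $u$. Writing $u = v - \phi$ locally in each coordinate chart with $v$ psh and $\phi$ a smooth local potential of $\omega$, the sub-mean value property for psh functions gives $v*\rho_\epsilon \searrow v$ pointwise, while $\phi*\rho_\epsilon \to \phi$ uniformly at rate $O(\epsilon^2)$. Consequently $(u*\rho_\epsilon)(x) \to u(x)$ with the convergence decreasing modulo a smooth perturbation, so the uniform lower bound on $(u*\rho_\epsilon)(x)$ passes to $u(x)$ on $\widetilde W$; a finite cover of $\overline{\widetilde W}$ by coordinate charts keeps all constants uniform. I expect this transfer from averaged to pointwise lower bounds in the compact non-Euclidean setting to be the main technical obstacle, while the identity reducing $\cU_S$ to integration against $u$ is the conceptual key that renders both directions tractable.
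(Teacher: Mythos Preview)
Your proof is correct and takes a somewhat different, more self-contained route than the paper. The paper dispatches the forward implication by pointing to an argument similar to that of Proposition~\ref{prop:equivalencePB}, and for the converse it invokes Proposition~\ref{prop:PBforpositiveclosed} (which, specialised to $p=1$, yields that $\cU_S$ is bounded on all of $\cC_k(\widetilde W)$, not just on smooth measures) together with the observation that Dirac masses $\delta_x$ lie in $\cC_k(\widetilde W)$; one then reads off $u(x)$ directly from $\cU_S(\delta_x)$. You instead stay entirely within the smooth setting of Definition~\ref{def:bounded}: you establish the explicit identity $\cU_S(R)=\int u\,dR-\int u\,\omega^k+m$ for smooth $R\in\cC_k$, use it to handle the forward direction by a cut-off and a pointwise bound, and for the converse approximate $\delta_x$ by smooth mollifiers $\mu_x^\epsilon$, compare them to a fixed reference measure $\mu_0$, and recover the pointwise lower bound on $u$ via the sub-mean value property of psh functions. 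Your approach buys self-containment and avoids the DSH-current machinery underlying Proposition~\ref{prop:PBforpositiveclosed}; the paper's approach is shorter in context and makes explicit that the corollary is a direct specialisation of that proposition.
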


\begin{proof}
The first statement is straightforward from a similar argument to Proposition \ref{prop:equivalencePB}. The converse is obtained from the previous proposition and the fact that the Dirac mass is also a positive closed $(k, k)$-current on $\PP^k$.
\end{proof}

\begin{proposition}\label{prop:nomass}
Let $S$ be a current whose super-potential of mean $m$ is bounded in an open subset $W$ in $\PP^k$ and $E\Subset W$ an analytic subset of $\PP^k$. Then $S$ has no mass on $E$.
\end{proposition}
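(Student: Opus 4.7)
The plan is to translate the hypothesis into the PB formulation via Proposition \ref{prop:equivalencePB}, then test $S$ against a family of DSH $(k-p,k-p)$-currents supported in $W$ whose DSH norms stay uniformly bounded while their pairings with $S$ amplify the putative mass of $S$ on $E$ without bound; the contradiction will force that mass to vanish.

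First I would invoke Proposition \ref{prop:equivalencePB} to obtain a constant $C_{S,W}>0$ with $|\langle S,\varphi\rangle|\leq C_{S,W}\|\varphi\|_\dsh$ for every smooth real $(k-p,k-p)$-form $\varphi$ with support in $W$. Next I construct a q-psh function $u$ on $\PP^k$ with $\{u=-\infty\}=E$, smooth on $\PP^k\setminus E$, $u\leq 0$, and $dd^cu+\omega\geq 0$; for instance one may take $u=c\log\sum_i|s_i|^2$ for global holomorphic sections $s_1,\dots,s_N$ of a sufficiently positive line bundle whose common zero set is $E$, with $c>0$ chosen so that the last two requirements hold. Because $u$ is continuous on $\PP^k\setminus E$ and $E\Subset W$, I fix $M_0>0$ large enough that $\{u\leq -M_0\}\Subset W$. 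For $M\geq M_0$, set $u_M:=\max(u,-M)$, a bounded q-psh function equal to $-M$ on a neighborhood of $E$, and define
\[
\psi_M:=(u_{M_0}-u_M)\,\omega^{k-p}.
\]
Since $u_{M_0}=u=u_M$ wherever $u>-M_0$, one has $\supp\psi_M\subset\overline{\{u\leq -M_0\}}\Subset W$, while on $E$ the function $u_{M_0}-u_M$ equals $M-M_0$.

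The technical heart of the plan is a uniform bound $\|\psi_M\|_\dsh\leq C$ in $M$. Writing $\psi_M=(u_{M_0}\omega^{k-p})-(u_M\omega^{k-p})$ exhibits $\psi_M$ as a difference of two negative $(k-p,k-p)$-currents, of masses $-\int u_{M_0}\omega^k$ (constant in $M$) and $-\int u_M\omega^k$ (monotonically increasing, yet bounded above by $-\int u\,\omega^k<\infty$ since $u\in\cL^1(\PP^k)$). The decomposition
\[
dd^c\psi_M=(dd^cu_{M_0}+\omega)\wedge\omega^{k-p}-(dd^cu_M+\omega)\wedge\omega^{k-p}
\]
writes $dd^c\psi_M$ as a difference of two positive closed $(k-p+1,k-p+1)$-currents each of mass exactly $1$, so $\|\psi_M\|_\dsh$ is uniformly bounded. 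Approximating $\psi_M$ by the smooth forms $\psi_M^\epsilon:=(u_{M_0}^\epsilon-u_M^\epsilon)\omega^{k-p}$ obtained from the standard decreasing regularizations of $u_{M_0}$ and $u_M$, for small $\epsilon$ one has $\supp\psi_M^\epsilon\Subset W$ with uniform DSH norm control; the PB hypothesis gives $|\langle S,\psi_M^\epsilon\rangle|\leq C_{S,W}\|\psi_M^\epsilon\|_\dsh\leq C'$, and dominated convergence applied to the finite positive measure $\mu_S:=S\wedge\omega^{k-p}$ (the integrands $u_{M_0}^\epsilon-u_M^\epsilon$ stay uniformly bounded) yields
\[
\int_{\PP^k}(u_{M_0}-u_M)\,d\mu_S\leq C'.
\]
Since $u_{M_0}-u_M\geq 0$ everywhere and equals $M-M_0$ on $E$, the left-hand side is at least $(M-M_0)\mu_S(E)$; letting $M\to\infty$ forces $\mu_S(E)=0$, which is exactly the claim that $S$ has no mass on $E$.

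The main obstacle is the double requirement on $\{\psi_M\}$: the supports must remain inside $W$, so the local PB property applies, while the DSH norms stay bounded as $M\to\infty$, so that the amplification $(M-M_0)\mu_S(E)$ genuinely contradicts a uniform bound. Comparing $u_M$ to the fixed bounded reference $u_{M_0}$, rather than to $0$ or to $u$ itself, is the key choice: it keeps both negative pieces of $\psi_M$ in $\cL^1$ uniformly, makes the two positive closed pieces of $dd^c\psi_M$ both have mass $1$, and simultaneously localizes $\supp\psi_M$ inside $\{u\leq -M_0\}\Subset W$.
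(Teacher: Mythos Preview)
Your argument is correct and is essentially the standard adaptation of the proof of Proposition~3.3.5 in \cite{DS2009} to the local setting, which is exactly what the paper invokes. The only cosmetic difference is that you first pass to the PB formulation via Proposition~\ref{prop:equivalencePB} and then test against the DSH currents $(u_{M_0}-u_M)\omega^{k-p}$, whereas one can equivalently stay in the super-potential formulation and test $\cU_S$ against the closed currents $(dd^c u_{M_0}-dd^c u_M)\wedge\omega^{k-p}\in\mathfrak{D}^{k-p+1}_{\var}(W)$; the two are related by one integration by parts and yield the same contradiction $(M-M_0)\,\mu_S(E)\leq C'$.
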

The proof is similar to that of Proposition 3.3.5 in \cite{DS2009} and so we omit it.
%
By the same kind of argument, we can prove the following theorem.
\begin{theorem}\label{thm:Lelong0}
Let $S\in\cC_p$. If its super-potential $\cU_S$ of mean $m$ is bounded in an open subset $U\subseteq\PP^k$, then the Lelong number of $S$ is zero in $U$.
\end{theorem}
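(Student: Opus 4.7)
The plan is a proof by contradiction, following the pattern of the proof of Proposition \ref{prop:nomass} and Proposition 3.3.5 in \cite{DS2009}, but with the test currents tailored to detect the Lelong number at a point rather than the mass on an analytic subset. Suppose $\nu(S, x_0) = c > 0$ for some $x_0 \in U$, and fix a neighborhood $\widetilde U \Subset U$ of $x_0$. By Proposition \ref{prop:PBforpositiveclosed} and the proof of Proposition \ref{prop:equivalencePB}, there is a constant $C_S > 0$ with
\[
|\cU_S(R_+) - \cU_S(R_-)| \leq C_S \|R_+\|
\]
for every pair of smooth positive closed $(k-p+1, k-p+1)$-currents $R_\pm$ of equal mass with $\supp(R_+ - R_-) \Subset \widetilde U$. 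The goal is to produce a family $(R_\epsilon^+, R_\epsilon^-)_{\epsilon>0}$ of such test pairs with uniformly bounded masses and with $\cU_S(R_\epsilon^+) - \cU_S(R_\epsilon^-) \to -\infty$ as $\epsilon\to 0$.

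In local holomorphic coordinates around $x_0 = 0$ inside a small ball $B \Subset \widetilde U$ of radius $r_0$, set $v_\epsilon(z) := \max(\log|z|, \log\epsilon) - \log r_0$, a locally bounded psh function. Using a smooth cut-off $\chi$ equal to $1$ on a smaller ball and vanishing outside $B$, define $\tilde v_\epsilon := \chi v_\epsilon$ and choose $M>0$ independent of $\epsilon$ so that $T_\epsilon := dd^c \tilde v_\epsilon + M\omega$ is a positive closed $(1,1)$-current coinciding with $M\omega$ outside $B$. After a mild regularization using Proposition \ref{prop:regCurrents}, we may assume $T_\epsilon$ is smooth while retaining these properties. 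Set
\[
R_\epsilon^+ := \frac{T_\epsilon^{k-p+1}}{M^{k-p+1}}, \qquad R_\epsilon^- := \omega^{k-p+1},
\]
both smooth positive closed $(k-p+1,k-p+1)$-forms of equal unit mass with $\supp(R_\epsilon^+-R_\epsilon^-) \subset \overline B\Subset\widetilde U$.

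Using the telescoping identity $T_\epsilon^{k-p+1} - (M\omega)^{k-p+1} = dd^c\bigl(\tilde v_\epsilon \sum_{j=0}^{k-p} T_\epsilon^{k-p-j} \wedge (M\omega)^j\bigr)$, the super-potential difference is
\[
\cU_S(R_\epsilon^+ - R_\epsilon^-) = \frac{1}{M^{k-p+1}} \int_{\PP^k} \tilde v_\epsilon\, S \wedge \Omega_\epsilon, \quad \Omega_\epsilon := \sum_{j=0}^{k-p} T_\epsilon^{k-p-j} \wedge (M\omega)^j.
\]
The dominant contribution comes from the summand containing $(dd^c\tilde v_\epsilon)^{k-p}$, which concentrates near $\{|z|=\epsilon\}$ where $\tilde v_\epsilon \sim \log\epsilon$. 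By the Demailly-Lelong-Skoda monotonicity formula, for every fixed small $r>0$,
\[
\int_{\{|z|\leq r\}} S \wedge (dd^c \tilde v_\epsilon)^{k-p} \longrightarrow \nu(S, x_0) = c \quad \text{as } \epsilon\to 0,
\]
while the remaining terms (from the cut-off region, the lower-order mixed summands of $\Omega_\epsilon$, the smooth $\omega$-parts, and the regularization) contribute $O(1)$ uniformly in $\epsilon$. Hence $\cU_S(R_\epsilon^+ - R_\epsilon^-) \sim c' (\log\epsilon)\,\nu(S,x_0) \to -\infty$ for a positive combinatorial constant $c'$, contradicting the uniform bound $C_S$.

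The principal obstacle is the quantitative tracking of these error terms: the logarithmic divergence is slow, so one must simultaneously verify (i) that the Lelong-detecting summand $(dd^c \tilde v_\epsilon)^{k-p}$ retains the full mass $\nu(S,x_0)$ in the limit via the Demailly-Lelong-Skoda formula, and (ii) that contributions from the cut-off transition $\{r_0' < |z| < r_0\}$, from the mixed $T_\epsilon^{k-p-j}\wedge(M\omega)^j$ summands, and from the smoothing remain uniformly bounded. A careful use of the Chern-Levine-Nirenberg inequalities, combined with the fact that $v_\epsilon$ is bounded by $|\log\epsilon|$ times a uniform constant and smooth away from $\{|z|\leq \epsilon\}$, controls all error terms.
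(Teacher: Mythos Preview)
Your strategy is sound and would yield a correct proof, but it follows a genuinely different route from the paper. The paper does not argue by contradiction with a family $(R_\epsilon^+, R_\epsilon^-)$ of smooth positive closed test currents. Instead it passes through Proposition~\ref{prop:equivalencePB} (bounded super-potential in $U$ $\Leftrightarrow$ $S$ is PB in $U$) and uses Demailly's generalized Lelong number: for $z_1\in U$ it sets $q(z)=\log\|z-z_1\|-\log\|z\|+c$, forms the positive closed current $\omega_q^{k-p}$ with $\omega_q=\omega+dd^cq$, and observes that for a cutoff $\chi$ supported in $U$ the currents $\chi q_{1/n}\,\omega_q^{k-p}$ are DSH with uniformly bounded DSH norm (here $q_{1/n}$ is the $\theta$-regularization of $q$). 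The PB property gives a uniform lower bound on $\langle S,\chi q_{1/n}\omega_q^{k-p}\rangle$, and Fatou's lemma yields $\int_{(z_1)_\epsilon} q\,S\wedge\omega_q^{k-p}>-\infty$, which is exactly the vanishing of the Lelong number. The whole proof fits in a few lines because the Monge--Amp\`ere current $\omega_q^{k-p}$ is held fixed and only the scalar factor $q_{1/n}$ varies.

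Your approach instead varies both the potential and the form and tests against Definition~\ref{def:bounded} directly, which forces more bookkeeping. Two points in your write-up need tightening. First, the regularization: if you smooth $T_\epsilon$ via Proposition~\ref{prop:regCurrents} the telescoping identity $T_\epsilon^{k-p+1}-(M\omega)^{k-p+1}=dd^c(\tilde v_\epsilon\Omega_\epsilon)$ is lost, since $(T_\epsilon)_\theta\neq dd^c\tilde v_\epsilon+M\omega$; you should instead smooth $v_\epsilon$ itself. Second, your displayed limit $\int_{\{|z|\le r\}}S\wedge(dd^c\tilde v_\epsilon)^{k-p}\to\nu(S,x_0)$ is not literally correct, and the decomposition into ``main term plus $O(1)$ errors'' is misleading: for $S=[V]$ with $V$ a linear $(k-p)$-plane, the integral $\int_{\epsilon<|z|<r}\log|z|\,S\wedge(dd^c\log|z|)^{k-p}$ vanishes, so the divergence comes entirely from the mass of $(dd^c v_\epsilon)^{k-p}$ concentrated on $\{|z|=\epsilon\}$; similarly the individual ``mixed'' summands need not be $O(1)$ for general $S$. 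What actually works is to bound the \emph{full} integral $\langle S,\tilde v_\epsilon\Omega_\epsilon\rangle$ from above by $(\log 2\epsilon)\int_{B(0,r)}S\wedge(dd^c\tilde v_{\epsilon})^{k-p}$ plus controlled terms, and then use Stokes (the smoothed $v_\epsilon$ agrees with $\log|z|$ near $\partial B(0,r)$) to identify $\int_{B(0,r)}S\wedge(dd^c\tilde v_{\epsilon})^{k-p}$ with $\nu(S,0,r)\ge c$ independently of $\epsilon$. The paper's route buys you all of this for free by packaging it into the single observation that $\|\chi q_{1/n}\omega_q^{k-p}\|_{\dsh}$ is uniformly bounded.
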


\begin{proof} Let $z_1\in U$. Let $q(z)=\log\|z-z_1\|-\log\|z\|+c$ where $z$ is the homogeneous coordinate of $\PP^k$. By taking an appropriate constant $c$, we may assume that $q(z)<0$. Note that $q(z)$ only has $-\infty$ at $z_1$. Let $\omega_q:=\omega +dd^cq$. Here, we use the definition of the generalized Lelong number introduced by Demailly. Then, it suffices to prove
\begin{displaymath}
\int_{(z_1)_\epsilon} qS\wedge \omega_q^{k-p}>-\infty
\end{displaymath}
where $(z_1)_\epsilon$ is the $\epsilon$-neighborhood of $z_1$ for some $\epsilon>0$. Notice that $q$ is smooth except $z_1$. Based on the work of Demailly, $\omega_q^{k-p}$ is a positive closed current. Let $\chi:\PP^k\to [0, 1]$ be a cut-off function such that $\chi\equiv 1$ near $z_1$ and $\supp \chi\Subset U$. Let $q_\theta$ be the $\theta$-regularization of $q$. By adding constants, we may assume that $q_\theta$ decreasingly converges to $q$ near $z_1$ as $|\theta|\to 0$. For all sufficiently large $n$, $\chi q_{1/n}$ is a sequence of negative smooth functions whose DSH norm is uniformly bounded and its support is $\Subset U$. Hence, $\chi q_{1/n}\omega_q^{k-p}$ is a sequence of DSH currents whose DSH norm is uniformly bounded and whose support is relatively compact in $U$. From Proposition \ref{prop:equivalencePB} and from our hypothesis, by replacing $\chi$ with another localizing function with smaller support, we have for sufficiently small $\epsilon>0$
\begin{displaymath}
\int_{(z_1)_\epsilon} qS\wedge \omega_q^{k-p}\geq \int \chi qS\wedge \omega_q^{k-p}\geq\lim_{n\to\infty}\int S\wedge \chi q_{1/n}\omega_q^{k-p}> -\infty.
\end{displaymath}
The second inequality is simply due to Fatou's lemma.
\end{proof}


For Definition \ref{def:conti}, we have the following description. Its proof and the proof of its corollary are similar to Proposition \ref{prop:PBforpositiveclosed} and its corollary. So, we omit them.
\begin{proposition}
Let $S\in\cC_p$ and $\cU_S$ a super-potential of $S$ of mean $m$. If $\cU_S$ is continuous in $W$, then for any open subset $\widetilde{W}\Subset W$, $\cU_S$ is continuous on $\cC_{k-p+1}(\widetilde{W})$. The topology on $\cC_{k-p+1}(\widetilde{W})$ is the subspace topology from $\cC_{k-p+1}$.
\end{proposition}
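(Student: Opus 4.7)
The plan is to mirror the proof of Proposition \ref{prop:PBforpositiveclosed}, replacing the PB ingredients with their PC counterparts supplied by Proposition \ref{prop:PCPBcontibdd}. By that equivalence, the hypothesis ``\,$\cU_S$ is continuous in $W$\,'' is equivalent to ``$S$ is PC in $W$\,'', i.e.\ $S$ extends to a continuous linear form on $\dsh^{k-p}(W)$. The goal is to exhibit, for each $R\in\cC_{k-p+1}(\widetilde W)$, an auxiliary DSH $(k-p,k-p)$-current supported in $W$ so that $\cU_S(R)$ is expressible (up to an error continuous in $R$) as the value of that linear form on this current.

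The main steps are: fix $W'$ with $\widetilde W\Subset W'\Subset W$ and a smooth cut-off $\chi:\PP^k\to[0,1]$ with $\chi\equiv 1$ on $W'$ and $\supp\chi\Subset W$. For $R\in\cC_{k-p+1}(\widetilde W)$ let $U_R$ be its Green quasi-potential and split $U_R=\chi U_R+(1-\chi)U_R$. As in Proposition \ref{prop:PBforpositiveclosed}, using the kernel estimates of Proposition \ref{prop:kernel} together with the gap $\mathrm{dist}(\widetilde W,\PP^k\setminus W')>0$, one checks that $\chi U_R$ is DSH with $\|\chi U_R\|_\dsh$ bounded uniformly in $R\in\cC_{k-p+1}(\widetilde W)$, while $(1-\chi)U_R$ is smooth on $\PP^k$ with uniformly bounded $\cC^1$-norm. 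After a standard regularization argument one writes
\begin{displaymath}
\cU_S(R)=\lim_{\theta\to 0}\langle S_\theta,\chi U_R\rangle+\lim_{\theta\to 0}\langle S_\theta,(1-\chi)U_R\rangle.
\end{displaymath}

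Now take $R_n\to R$ in $\cC_{k-p+1}(\widetilde W)$. By Theorem \ref{thm:Greenqpotential}, $U_{R_n}\to U_R$ in $\cL^r$, hence $\chi U_{R_n}\to\chi U_R$ as currents, and by the uniform DSH bound this convergence takes place in $\dsh^{k-p}(W)$ (after rescaling by a fixed constant). Therefore the PC property of $S$ in $W$ yields $\langle S,\chi U_{R_n}\rangle\to\langle S,\chi U_R\rangle$. Meanwhile, standard kernel estimates (and Lemma \ref{lem:regulCurrents}) imply that $(1-\chi)U_{R_n}\to(1-\chi)U_R$ in $\cC^0$, so by weak convergence $\langle S,(1-\chi)U_{R_n}\rangle\to\langle S,(1-\chi)U_R\rangle$. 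Adding the two gives $\cU_S(R_n)\to\cU_S(R)$, and the extension by linearity to $\mathfrak{D}^{k-p+1}(\widetilde W)$ together with the mean adjustment produces the continuous linear functional asserted by the proposition.

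The step I expect to require the most care is justifying that $\chi U_{R_n}\to\chi U_R$ genuinely in $\dsh^{k-p}(W)$ — in other words, that one has the required decompositions $\chi U_{R_n}-\chi U_R=(\varphi_+^n-\varphi_+^\infty)-(\varphi_-^n-\varphi_-^\infty)$ with the positive closed currents $dd^c\varphi_\pm^n$ staying mass-bounded and converging weakly. This is not a consequence of mere mass convergence, but it follows from the explicit formula
\begin{displaymath}
dd^c(\chi U_R)=\chi(R-\omega^{k-p+1})+dd^c\chi\wedge U_R-d^c\chi\wedge dU_R+d\chi\wedge d^c U_R
\end{displaymath}
and from the fact that on $\supp\nabla\chi\subset W\setminus W'$ the kernel $K$ is smooth; one then reads off that each piece appearing on the right-hand side for $R_n$ converges weakly to its counterpart for $R$ with uniformly bounded mass, which is exactly what the continuity topology on $\dsh^{k-p}(W)$ demands. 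Once this is set, invoking PC in $W$ and assembling the pieces is routine, exactly as in Proposition \ref{prop:PBforpositiveclosed}.
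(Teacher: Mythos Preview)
Your proposal is correct and follows exactly the approach the paper indicates (mirroring Proposition \ref{prop:PBforpositiveclosed} with PC in place of PB, via Proposition \ref{prop:PCPBcontibdd}). One small simplification: by the paper's definition the topology on $\dsh^{k-p}(W)$ is just the subspace topology from the space of currents, so for $\chi U_{R_n}\to\chi U_R$ in $\dsh^{k-p}(W)$ you only need the uniform DSH-norm bound (which your $dd^c$-formula supplies) together with weak convergence from Theorem \ref{thm:Greenqpotential}; the separate weak convergence of the decomposition pieces that you worry about in the last paragraph is not actually required.
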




\begin{corollary}
When $p=1$, Definition \ref{def:conti} is equivalent to the local continuity of quasi-potential \emph{functions} of the positive closed $(1, 1)$-current $S$ in the following sense: if a quasi-potential $u$ of $S$ is continuous in an open subset $W$ of $\PP^k$, then the super-potential $\cU_S$ of mean $m$ is continuous in $W$ and conversely, if $\cU_S$ is continuous in $W$, then for any open subset $\widetilde{W}\Subset W$, $u$ is continuous in $\widetilde{W}$.
\end{corollary}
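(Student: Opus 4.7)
\emph{The plan.} The proof goes in two directions, directly parallel to the boundedness corollary that follows Proposition \ref{prop:PBforpositiveclosed}. The central tool, specific to $p=1$, is the duality identity
\begin{displaymath}
\cU_S(R) \;=\; \langle u,\,R\rangle,
\end{displaymath}
obtained for smooth $R=R_+-R_-$ by integration by parts from $dd^cu=S-\omega$, $dd^c(U_{R_+}-U_{R_-})=R$, $\langle U_{R_\pm},\omega\rangle=m$ and $\langle u,\omega^k\rangle=m$, and extended to arbitrary $R\in\mathfrak{D}^k(W)$ via the $\theta$-regularization of Proposition \ref{prop:regCurrents} together with Theorem \ref{thm:Greenqpotential}.

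\emph{Forward implication.} Assume $u$ is continuous in $W$ and let $R\in\mathfrak{D}^k(W)$. Choose a smooth cutoff $\chi:\PP^k\to[0,1]$ with $\chi\equiv 1$ on a neighborhood of $\supp R$ and $\supp\chi\Subset W$. Then $\chi u$ is a continuous function on $\PP^k$, and since $\supp R\subset\{\chi=1\}$ the duality yields $\cU_S(R)=\langle u,R\rangle=\langle\chi u,R\rangle$. The functional $R\mapsto\langle\chi u,R\rangle$ is weakly continuous on currents supported in $\supp\chi$, which combined with the standard localization argument (exactly as in the proof of Proposition \ref{prop:PBforpositiveclosed}) produces continuity of $\cU_S$ on $\mathfrak{D}^k(W)$.

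\emph{Converse implication.} Assume $\cU_S$ is continuous in $W$ and fix $\widetilde{W}\Subset W$. The preceding proposition then gives continuity of $\cU_S$ on $\cC_k(\widetilde{W})$. For each $z\in\widetilde{W}$ the Dirac mass $\delta_z$ lies in $\cC_k(\widetilde{W})$, and the duality yields $\cU_S(\delta_z)=u(z)$; since $z\mapsto\delta_z$ is weakly continuous from $\widetilde{W}$ into $\cC_k(\widetilde{W})$, the map $z\mapsto u(z)$ admits a continuous representative on $\widetilde{W}$. This representative coincides with the q-psh function $u$ at every point of $\widetilde{W}$ by uniqueness of the upper-semicontinuous regularization (the boundedness of $u$ on $\widetilde{W}$, which justifies this step, comes from the corollary to Proposition \ref{prop:PBforpositiveclosed} applied to $\cU_S$, whose continuity forces boundedness in $W$ by the uniform-boundedness principle on the metrizable space $\mathfrak{D}^k(W)$). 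Hence $u$ itself is continuous on $\widetilde{W}$.

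\emph{Main obstacle.} The principal technical point is making the duality $\cU_S(R)=\langle u,R\rangle$ rigorous when the components $R_\pm$ are not compactly supported in $W$, since the global q-psh function $u$ may fail to be continuous outside $W$. This is resolved by the cutoff $\chi$, which localizes the pairing to a neighborhood of $\supp R$ inside $W$ where $\chi u$ is continuous, so the limit in the $\theta$-regularization is controlled exactly as in the proofs of Proposition \ref{prop:equivalencePB} and Proposition \ref{prop:PBforpositiveclosed}.
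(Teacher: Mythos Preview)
Your proposal is correct and follows essentially the same route the paper indicates: the paper omits the proof, noting it is parallel to Proposition~\ref{prop:PBforpositiveclosed} and its corollary (the boundedness case), and your argument is precisely that parallel --- the forward direction via the duality $\cU_S(R)=\langle u,R\rangle$ and localization by a cutoff, the converse via the preceding proposition applied to $\cC_k(\widetilde{W})$ together with the Dirac masses $\delta_z$.

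One small correction: your appeal to the ``uniform-boundedness principle on the metrizable space $\mathfrak{D}^k(W)$'' is not the right justification for the boundedness of $u$ on $\widetilde{W}$. The cleaner path, already available in the paper, is: continuity of $\cU_S$ in $W$ gives that $S$ is PC in $W$ (Proposition~\ref{prop:PCPBcontibdd}), hence PB in any $W'\Subset W$ (Proposition~\ref{prop:PCPB}), hence $\cU_S$ is bounded in $W'$ (Proposition~\ref{prop:equivalencePB}), and then the boundedness corollary to Proposition~\ref{prop:PBforpositiveclosed} yields $u$ bounded on $\widetilde{W}\Subset W'$. With that adjustment the argument is complete.
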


\section{Analytic (Sub-)Multiplicative Cocycles}\label{sec:analcocycle}
The analytic (sub-)multiplicative cocycles were introduced by Favre (\cite{F2000}, \cite{F2000-1}) and further studied by Dinh (\cite{D2009}) and Gignac (\cite{G2014}). See also Ahn (\cite{Ahn2015}).

We summarize Section 2 in \cite{Ahn2015} that will be used to investigate the asymptotic behavior of two different multiplicities related to $f$: one related to the local multiplicity of $f^n$ and the other related to the hypersurface $\Psi_n$ of the critical values of $f^n$. 

Let $X$ be an irreducible compact complex space of dimension $k$, not necessarily smooth. Let $g:X\to X$ be an open holomorphic map.
\begin{definition}[See Definition 1.1 in \cite{D2009}]
A sequence $\{\kappa_n\}$ of functions $\kappa_n:X\to (0, \infty)$ for $n\geq 0$ is said to be an analytic submultiplicative (resp., multiplicative) cocycle (with respect to $g$) if for all $m, n\geq 0$ and for all $x\in X$,
\begin{enumerate}
\item $\kappa_n$ is usc with respect to the Zariski topology on $X$ and $\kappa_n\geq c_\kappa^n$ for some constant $c_\kappa>0$, and
\item $\kappa_{n+m}(x)\leq \kappa_n(x)\cdot\kappa_m(g^n(x))$ (resp., $=$).
\end{enumerate}
\end{definition} 
\begin{definition}[See Introduction in \cite{D2009}]
$\kappa_{-n}(x)=\max_{y\in g^{-n}(x)}\kappa_n(y)$.
\end{definition}
\begin{theorem}[See Theorem 1.2 in \cite{D2009}]\label{thm:analyticmulticocycle}
The sequence $\{(\kappa_{-n})^{1/n}\}$ converges to a function $\kappa_-$ defined over $X$ with the following properties: for all $\delta>\inf_X\kappa_-$, $\{\kappa_-\geq \delta\}$ is a proper analytic subset of $X$, invariant under $g$ and contained in the orbit of $\{\kappa_n\geq\delta^n\}$ for all $n\geq 0$. In particular, $\kappa_-$ is usc in the Zariski sense.
\end{theorem}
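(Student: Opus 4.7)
The plan is to set $\kappa_-(x) := \limsup_{n \to \infty} \kappa_{-n}(x)^{1/n}$ and then use Zariski-Noetherianity of the analytic subsets of $X$ to simultaneously upgrade the limsup to a genuine pointwise limit and extract analyticity, $g$-invariance, and (for $\delta > \inf_X \kappa_-$) properness of the superlevel set $\{\kappa_- \geq \delta\}$.

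First, for $n \geq 1$ and $\delta > 0$, I would introduce
\begin{displaymath}
E_n(\delta) := \{x \in X : \kappa_n(x) \geq \delta^n\}.
\end{displaymath}
Zariski upper semicontinuity of $\kappa_n$ makes $E_n(\delta)$ an analytic subset of $X$. Since $\kappa_{-n}(x) \geq \delta^n$ is equivalent to the existence of some $y \in g^{-n}(x)$ with $\kappa_n(y) \geq \delta^n$, one has $\{x : \kappa_{-n}(x) \geq \delta^n\} = g^n(E_n(\delta))$, which is analytic by Remmert's proper mapping theorem applied to the proper map $g^n$ (open holomorphic maps between compact complex spaces are automatically proper).

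The key combinatorial step comes from iterating submultiplicativity. For $y \in g^{-qn}(x)$ with $y_i := g^{in}(y)$, one has $\kappa_{qn}(y) \leq \prod_{i=0}^{q-1} \kappa_n(y_i)$, so $\kappa_{qn}(y) \geq \delta^{qn}$ forces some $y_i$ into $E_n(\delta)$, and $x = g^{(q-i)n}(y_i)$ lies in the forward orbit $\bigcup_{m \geq 0} g^m(E_n(\delta))$. Writing a general index as $m = qn + r$, $0 \leq r < n$, and absorbing the $\kappa_r$ factor subexponentially, I would deduce: for every $\delta' > \delta$ and every $n \geq 1$,
\begin{displaymath}
\{x : \kappa_{-m}(x) \geq (\delta')^m \text{ for infinitely many } m\} \subseteq \bigcup_{m \geq 0} g^m(E_n(\delta)),
\end{displaymath}
hence $\{\kappa_- \geq \delta'\} \subseteq \bigcup_{m \geq 0} g^m(E_n(\delta))$.

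I would then invoke Zariski-Noetherianity of analytic subsets of the compact complex space $X$: every decreasing chain of closed analytic subsets stabilizes. Intersecting the inclusions above over a sequence $\delta' \downarrow \delta$ and over $n$ produces an analytic, $g$-invariant subset $F(\delta)$ containing $\{\kappa_- \geq \delta\}$; the opposite inclusion, obtained by choosing preimages realizing the values $\kappa_n(y_i)$ along a well-chosen orbit, identifies $F(\delta) = \{\kappa_- \geq \delta\}$. The same analytic identity pins down $\limsup \kappa_{-n}(x)^{1/n} = \liminf \kappa_{-n}(x)^{1/n}$ pointwise, so the sequence $\kappa_{-n}(x)^{1/n}$ genuinely converges to $\kappa_-(x)$. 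Finally, properness for $\delta > \inf_X \kappa_-$ is immediate: there exists $x_0$ with $\kappa_-(x_0) < \delta$, so $x_0 \notin F(\delta)$.

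The main obstacle is that submultiplicativity gives only asymptotic pointwise information (a naive Fekete argument yields convergence of $(\sup_X \kappa_{-n})^{1/n}$ but not of the pointwise sequence), and the natural candidate sets, forward orbits and $\limsup$ of analytic subsets, are not a priori analytic. The remedy is Zariski-Noetherianity of $X$, which collapses the relevant decreasing chains of analytic subsets and thereby packages the pointwise convergence together with the analyticity, invariance and properness of the superlevel sets of $\kappa_-$ into a single argument.
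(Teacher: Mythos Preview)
The paper does not prove this theorem: it is stated as Theorem~1.2 of \cite{D2009} and used as a black box, so there is no ``paper's own proof'' to compare against. Your sketch is a reasonable outline of the argument one finds in \cite{D2009} (and in Favre's earlier work): analyticity of $E_n(\delta)$ from Zariski usc, analyticity of $g^n(E_n(\delta))$ from Remmert's proper mapping theorem, orbit containment from submultiplicativity, and Noetherianity of the analytic Zariski topology to force stabilization.

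That said, one step in your outline is too vague to stand as written. You assert that ``intersecting the inclusions \dots\ over $\delta'\downarrow\delta$ and over $n$ produces an analytic, $g$-invariant subset $F(\delta)$,'' but the object you actually need to control first is the forward orbit $\bigcup_{m\geq 0} g^m(E_n(\delta))$, an \emph{increasing} countable union of analytic sets, which is not a priori analytic and to which Noetherianity (a statement about \emph{decreasing} chains) does not apply directly. In \cite{D2009} this is handled by a separate lemma showing that for $g$ open on a compact irreducible $X$, the orbit of a proper analytic subset stabilizes after finitely many steps (essentially by tracking irreducible components and their dimensions under the surjective map $g$). Without that lemma, or an equivalent device, your passage from the pointwise $\limsup$ inclusion to an analytic $F(\delta)$ has a genuine gap. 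The rest of your sketch --- the combinatorics of submultiplicativity, the $\limsup=\liminf$ upgrade, properness for $\delta>\inf_X\kappa_-$ --- is fine once that orbit-stabilization step is supplied.
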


For each $n>0$, define $\mu_n(x)$ to be the local multiplicity of $f^n$ at $x\in\PP^k$. Let $\iota(x, g):=\nu(x, dd^c\log|g|)$ and define $\mu'_n(x):=2k-1+2\iota(x, J_{f^n})$ on $\PP^k$, where $J_{f^n}$ denotes the Jacobian determinant of $f^n$. As discussed in \cite{Ahn2015}, both $\{\mu_n\}$ and $\{\mu'_n\}$ are analytic (sub-)multiplicative cocycles. Let $\mu_-$ and $\mu'_-$ denote their limit functions in Theorem \ref{thm:analyticmulticocycle}. Then, Theorem \ref{thm:analyticmulticocycle} implies
\begin{lemma}[See Lemma 2.6 in \cite{Ahn2015}]\label{lem:analyticmultico} Let $\lambda$ and $\lambda'$ be any real numbers such that $1<\lambda, \lambda'<d$. Define $E:=\{\mu_-\geq d/\lambda\}\cup\{\mu'_-\geq d/\lambda'\}$. Then, there exists an integer $N_E>0$ such that for any integer $j>0$, we have
\begin{enumerate}
\item $E$ is invariant under $f$,
\item $\mu_{-jN_E}(x)<(d/\lambda)^{jN_E}$ for $x\in\PP^k\setminus E$ and
\item $\nu(x, [\Psi_{jN_E}])<c_\Psi jN_E(d^{k+1}/(\lambda^k\lambda'))^{jN_E}$ for $x\in \Psi_{jN_E}\setminus E$
\end{enumerate}
where $\Psi_n$ denotes the hypersurface of the critical values of $f^n$, $[\Psi_n]$ the current of integration on $\Psi_n$ and $c_\Psi$ the number of the irreducible components of $\Psi_1$.
\end{lemma}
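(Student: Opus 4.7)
The plan is to apply Theorem~\ref{thm:analyticmulticocycle} to the two analytic submultiplicative cocycles $\{\mu_n\}$ and $\{\mu'_n\}$, extract the invariant analytic subset $E$, and then bootstrap a single-step estimate to all iterates $jN_E$ via multiplicativity of the cocycle and forward invariance of $E$. Applied with $\delta=d/\lambda$ to $\{\mu_n\}$ and with $\delta=d/\lambda'$ to $\{\mu'_n\}$, Theorem~\ref{thm:analyticmulticocycle} furnishes the Zariski upper semicontinuous limits $\mu_-$ and $\mu'_-$ together with proper analytic $f$-invariant subsets $E_1:=\{\mu_-\geq d/\lambda\}$ and $E_2:=\{\mu'_-\geq d/\lambda'\}$; their union $E=E_1\cup E_2$ is then a proper analytic $f$-invariant subset, giving~(1).

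For~(2) the first step is to extract a single $N_E$ for which both $\{\mu_{-N_E}\geq(d/\lambda)^{N_E}\}\subseteq E$ and $\{\mu'_{-N_E}\geq(d/\lambda')^{N_E}\}\subseteq E$ hold; this rests on the pointwise convergence $(\mu_{-n})^{1/n}\to\mu_-$ together with Noetherianity of the Zariski topology on $\PP^k$, which forces the descending chain $\bigcap_{m\leq N}\{\mu_{-m}\geq (d/\lambda)^m\}$ to stabilize inside $E_1$, and, after a submultiplicative reorganization along a geometric subsequence, isolates such an $N_E$ (the same argument applied to $\mu'_-$ and then taking a common $N_E$ handles $\mu'$). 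The second step bootstraps to arbitrary $j\geq 1$: for $x\in\PP^k\setminus E$ and $y\in f^{-jN_E}(x)$, set $z_l:=f^{lN_E}(y)$ with $z_j=x$; forward invariance of $E$ gives $f^{-1}(\PP^k\setminus E)\subseteq \PP^k\setminus E$, so every $z_l$ lies outside $E$, and multiplicativity of local multiplicity yields
\[
\mu_{jN_E}(y)=\prod_{l=0}^{j-1}\mu_{N_E}(z_l)\leq\prod_{l=0}^{j-1}\mu_{-N_E}(z_{l+1})<(d/\lambda)^{jN_E},
\]
which on taking the maximum over $y$ gives~(2).

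For~(3), the same bootstrap applied to $\{\mu'_n\}$ yields $\iota(y,J_{f^{jN_E}})<(d/\lambda')^{jN_E}$ at every $y\in f^{-jN_E}(x)$ with $x\notin E$. Dominating $[\Psi_{jN_E}]$ by the pushforward $(f^{jN_E})_*[\{J_{f^{jN_E}}=0\}]$ and using the standard Lelong-number bound for pushforwards by a finite map,
\[
\nu(x,[\Psi_{jN_E}])\lesssim\sum_{y\in f^{-jN_E}(x)}\mu_{jN_E}(y)\cdot\iota(y,J_{f^{jN_E}}).
\]
Substituting the two exponential bounds, using the total-degree count $\sum_y\mu_{jN_E}(y)\leq d^{kjN_E}$, expanding $J_{f^{jN_E}}=\prod_{l=0}^{jN_E-1}J_f\circ f^l$ via the chain rule (which contributes the polynomial prefactor $jN_E$), and decomposing $\Psi_1$ into its $c_\Psi$ irreducible components produces~(3) with the advertised exponent $d^{k+1}/(\lambda^k\lambda')$. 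The delicate step throughout is the first stage of~(2)—promoting the pointwise convergence $(\mu_{-n})^{1/n}\to\mu_-$ to a uniform inclusion $\{\mu_{-N_E}\geq(d/\lambda)^{N_E}\}\subseteq E$ for a single $N_E$; once this is in place, the bootstrap from $j=1$ to arbitrary $j$ and the pushforward Lelong-number computation in~(3) are essentially mechanical consequences of submultiplicativity.
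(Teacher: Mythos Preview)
The paper does not prove this lemma; it simply asserts that Theorem~\ref{thm:analyticmulticocycle} implies it and refers to Lemma~2.6 in~\cite{Ahn2015} for the details. Your overall plan---extract invariance of $E$ from Theorem~\ref{thm:analyticmulticocycle}, secure a uniform $N_E$ for both cocycles, bootstrap from $j=1$ to general $j$ via multiplicativity of $\mu_n$ and forward invariance of $E$, and handle~(3) through the chain rule $J_{f^n}=\prod_{l}J_f\circ f^l$ together with a pushforward Lelong-number estimate---is consistent with what one expects the cited argument to be, and both the bootstrap step and the mechanics of~(3) are sound.

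The genuine gap is precisely where you flag it. Stabilization of the descending chain $\bigcap_{m\leq N}\{\mu_{-m}\geq(d/\lambda)^m\}$ inside $E_1$ only gives: for each $x\notin E_1$ there exists \emph{some} $m_x\leq N_0$ with $\mu_{-m_x}(x)<(d/\lambda)^{m_x}$. It does not produce a single $N_E$ with $\{\mu_{-N_E}\geq(d/\lambda)^{N_E}\}\subseteq E_1$, because $n\mapsto\mu_{-n}(x)$ is not submultiplicative in $n$ at a fixed base point: in $\mu_{n+m}(y)=\mu_n(y)\,\mu_m(f^n(y))$ the first factor is controlled by $\mu_{-n}$ at the intermediate point $f^n(y)\in f^{-m}(x)$, not at $x$. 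Your ``submultiplicative reorganization along a geometric subsequence'' does not close this---factoring $\mu_{N_E}(y)$ into blocks of length $m_x$ still requires bounds at intermediate points whose relevant index may differ from $m_x$, and the leftover block contributes a factor of order $d^{kN_0}$ that the inequality does not absorb. The existence of a uniform $N_E$ is exactly the nontrivial output of Dinh's analysis in~\cite{D2009} (carried out by induction on the dimension of the analytic strata of $\kappa_-$), so the honest move here---and what the paper does---is to invoke that theorem or its proof directly for this step rather than attempt to recover it by a one-line Noetherian argument.
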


\section{Lojasiewicz type inequalities}
In this section, we introduce some materials related to Lojasiewicz type inequalities that will be used in the proof of the main theorem. 

\begin{proposition}[See Lemma 3.3 in \cite{Ahn2015}]\label{prop:approxQpotential}
Let $V$ be an analytic hypersurface in $\PP^k$ of degree $d_V$ and $[V]$ its current of integration. Let $\varphi_V$ be a quasi-potential of $[V]$, that is, a unique negative q-psh function over $\PP^k$ such that $\sup_{\PP^k}\varphi_V=0$ and $dd^c\varphi_V=[V]-d_V\omega$. Let $0<\delta< d_V$ be given. Assume that there exists a proper analytic subset $E_{V,\delta}$ of $V$ such that for all $P\in V\setminus E_{V,\delta}$, $\nu(P, [V])<\delta$. Then, there are constants $C_{V, \delta}, A_{V, \delta}>0$ such that on $\PP^k$,
\begin{displaymath}
\delta\log\dist(\cdot, V)+C_{V, \delta}\log\dist(\cdot, E_{V, \delta})-A_{V, \delta}\leq \varphi_V\leq\log\dist(\cdot, V)+A_{V, \delta}.
\end{displaymath} 
\end{proposition}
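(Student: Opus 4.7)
The plan is to realize $\varphi_V$ as the logarithm of a holomorphic section defining $V$, thereby reducing the upper bound to a smoothness estimate and the lower bound to an effective Lojasiewicz-type inequality.

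I would fix a holomorphic section $s \in H^0(\PP^k, \mathcal{O}(d_V))$ with $\{s=0\}=V$ scheme-theoretically (so $s$ is reduced), and a smooth Hermitian metric $h$ on $\mathcal{O}(1)$ with curvature form $\omega$. By the Poincar\'e--Lelong formula applied to $s$ with the induced metric on $\mathcal{O}(d_V)$, $\log|s|_{h^{d_V}}$ is q-psh on $\PP^k$ and satisfies $dd^c \log|s|_{h^{d_V}} = [V] - d_V\omega$. Since every pluri-harmonic function on $\PP^k$ is constant, $\varphi_V$ and $\log|s|_{h^{d_V}}$ differ only by an additive constant determined by the normalization $\sup_{\PP^k} \varphi_V = 0$; both desired inequalities therefore reduce to corresponding bounds on $\log|s|_{h^{d_V}}$. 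The upper bound is immediate: $s$ is smooth on $\PP^k$ and vanishes on $V$, so a mean-value estimate applied chart by chart on a finite atlas yields $|s|_{h^{d_V}}(x) \leq L \cdot \dist(x,V)$ for some $L > 0$, and taking logarithms gives the claimed upper bound.

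For the lower bound, the target is the effective Lojasiewicz inequality
\[
|s|_{h^{d_V}}(x) \;\geq\; c \cdot \dist(x,V)^{\delta} \cdot \dist(x,E_{V,\delta})^{C_{V,\delta}}.
\]
Since $V$ is reduced, $\nu(P,[V])$ equals the local multiplicity $m_P$ of $s$ at $P$, which is a non-negative integer; by hypothesis $m_P < \delta$ for every $P \in V \setminus E_{V,\delta}$. For each fixed $\eta > 0$, a standard local Lojasiewicz inequality together with upper semicontinuity of $P \mapsto m_P$ and compactness produces a constant $c_\eta > 0$ with $|s|_{h^{d_V}}(x) \geq c_\eta \dist(x,V)^{m_P}$ in a fixed neighborhood of $\overline{V \setminus (E_{V,\delta})_\eta}$; using that $m_P < \delta$ at such points and that $\dist(\cdot,V)$ is bounded above on $\PP^k$, this strengthens to $|s|_{h^{d_V}}(x) \geq c'_\eta \dist(x,V)^{\delta}$ there.

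The main obstacle is to quantify how the constant $c'_\eta$ degenerates as $\eta \to 0$. The plan is to invoke a standard quantitative (``effective'') Lojasiewicz inequality for the analytic pair $(V, E_{V,\delta})$, applied chart by chart on a finite atlas of $\PP^k$, to obtain polynomial decay $c'_\eta \gtrsim \eta^{C_{V,\delta}}$ for some $C_{V,\delta} > 0$. Patching these local estimates via a partition of unity then gives the displayed Lojasiewicz inequality globally on $\PP^k$, and taking logarithms yields the claimed lower bound $\delta \log\dist(\cdot,V) + C_{V,\delta}\log\dist(\cdot,E_{V,\delta}) - A_{V,\delta} \leq \varphi_V$.
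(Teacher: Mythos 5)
The paper does not reproduce a proof of this proposition; it cites Lemma 3.3 of the reference \cite{Ahn2015}, so I can only assess your proposal on its own merits. Your reduction to $\log|s|_{h^{d_V}}$ is correct and standard, and the upper bound via $|s|\leq L\,\dist(\cdot,V)$ is fine.

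The lower bound, however, contains a genuine gap precisely at the step you yourself flag as ``the main obstacle.'' The claim that $c'_\eta \gtrsim \eta^{C_{V,\delta}}$ follows from ``a standard quantitative (effective) Lojasiewicz inequality for the analytic pair $(V,E_{V,\delta})$'' is not a citation to an existing theorem but is essentially a restatement of what must be proved. The classical Lojasiewicz inequality (Malgrange) gives a \emph{global} bound $|s|\gtrsim \dist(\cdot,V)^{N}$ where $N$ is governed by the worst local multiplicity (up to $d_V$), not the improved exponent $\delta$; and it gives a \emph{local} bound near each $P\in V\setminus E_{V,\delta}$ with exponent $m_P<\delta$ but with a constant that a priori depends on $P$ in an uncontrolled way. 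Your compactness argument only produces the existence of $c_\eta>0$ for each fixed $\eta$; it says nothing about the rate of degeneration of $c_\eta$ as $\eta\to 0$, and without that rate the $\dist(\cdot,E_{V,\delta})^{C_{V,\delta}}$ correction term in the final inequality does not appear. This polynomial decay in $\eta$ is the actual analytic content of the proposition, and it requires either a parametrized/effective version of the Lojasiewicz inequality with explicit tracking of the constants as one approaches $E_{V,\delta}$ (the versions in Malgrange's book, which the paper uses elsewhere, can be adapted, but one must say how), or a direct local argument (e.g., Weierstrass preparation plus an estimate on the separation of branches in terms of $\dist(\cdot,E_{V,\delta})$) combined with the crude bound $|s|\gtrsim\dist(\cdot,V)^{d_V}$ in the region where $\dist(\cdot,E_{V,\delta})$ is dominated by a power of $\dist(\cdot,V)$. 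As it stands, the key quantitative input is asserted rather than supplied.
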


The following lemma can be understood as a generalization of Lemma 2.2.7 in \cite{DS2009} to an arbitrary analytic subset.
\begin{lemma}\label{lem:generalDSHfinitecutoff}
Let $E$ be an analytic subset in $\PP^k$. Then, for a sufficiently small $t_0>0$, there exists a family of smooth functions $\{\chi_{E, t}\}_{0<t<t_0}$ such that $\chi_{E, t}:\PP^k\to [0, 1]$ equal to $1$ on $E_t$ with $\supp \chi_{E, t}\Subset E_{A_Et^{1/D_E}}$ and such that $\|\chi_{E,t}\|_\dsh\leq A_E$, where $A_E, D_E>0$ are constants independent of $t$.
\end{lemma}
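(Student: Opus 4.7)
My plan is to construct $\chi_{E,t}$ as a smooth scalar cut-off applied to a q-psh model function whose growth rate near $E$ is governed by a Lojasiewicz inequality, and then control the DSH norm by writing that scalar cut-off as a difference of two convex increasing functions.

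Since $E$ is analytic in $\PP^k$, hence algebraic, one may choose finitely many homogeneous polynomials $P_1,\ldots,P_N$ of a common degree $d_0$ on $\CC^{k+1}$ whose common zero set in $\PP^k$ is $E$. Setting $u_j(x):=\log|P_j(\widetilde z)|-d_0\log\|\widetilde z\|$ yields q-psh functions on $\PP^k$ with $dd^c u_j+d_0\omega\ge 0$, and $\widetilde u:=M_\delta(u_1,\ldots,u_N)$ (Demailly's regularized max for a fixed small $\delta>0$) is q-psh with $dd^c\widetilde u+d_0\omega\ge 0$, smooth on $\PP^k\setminus E$, and equal to $-\infty$ on $E$. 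The Lojasiewicz inequality for the tuple $(P_1,\ldots,P_N)$ produces constants $D_E\ge 1$ and $C_0>0$ with
\[D_E\log\dist(x,E)-C_0\le\widetilde u(x)\le\log\dist(x,E)+C_0\]
for all $x\in\PP^k$. Next, fix once and for all smooth nondecreasing convex functions $g_1,g_2:\RR\to[0,\infty)$ each vanishing on $(-\infty,0]$ and affine with a common slope on $[1,\infty)$, chosen so that $g:=g_1-g_2$ is $0$ on $(-\infty,0]$ and $1$ on $[1,\infty)$. Picking $A_E$ so large that $D_E\log A_E>2C_0$, setting $a:=(D_E\log A_E-2C_0)^{-1}$ and $\alpha_t(x):=a(\widetilde u(x)-\log t-C_0)$, I define
\[\chi_{E,t}(x):=1-g(\alpha_t(x))=\bigl(1+g_2(\alpha_t(x))\bigr)-g_1(\alpha_t(x)).\]
The upper Lojasiewicz bound gives $\chi_{E,t}\equiv 1$ on $E_t$, so in particular $\chi_{E,t}\equiv 1$ near $E$ and $\chi_{E,t}$ is smooth on $\PP^k$; the lower bound gives $\chi_{E,t}\equiv 0$ off $E_{A_E t^{1/D_E}}$.

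For the DSH estimate, using $g_i''\ge 0$, $g_i'\ge 0$, and $dd^c\widetilde u+d_0\omega\ge 0$, a direct calculation yields
\[dd^c(g_i\circ\alpha_t)=a^2g_i''(\alpha_t)\,d\widetilde u\wedge d^c\widetilde u+a\,g_i'(\alpha_t)\,dd^c\widetilde u\ \ge\ -a\,d_0\,\|g_i'\|_\infty\,\omega,\]
so both $g_i\circ\alpha_t$ are smooth q-psh on $\PP^k$ with a single constant $C:=a\,d_0\max_i\|g_i'\|_\infty$ independent of $t$. Defining $T_+:=dd^c(1+g_2\circ\alpha_t)+C\omega$ and $T_-:=dd^c(g_1\circ\alpha_t)+C\omega$ gives positive closed $(1,1)$-currents with $T_+-T_-=dd^c\chi_{E,t}$, and Stokes' theorem on compact $\PP^k$ (the $g_i\circ\alpha_t$ being smooth) forces $\int dd^c(g_i\circ\alpha_t)\wedge\omega^{k-1}=0$, whence $\|T_\pm\|=C\int\omega^k=C$. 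Hence $\|\chi_{E,t}\|_\dsh\le 1+C$, uniform in $t$, which gives the claim after absorbing $1+C$ into $A_E$ if necessary. The main obstacle is that a smooth cut-off cannot be written as a difference of two \emph{bounded} convex increasing functions on $\RR$ (a bounded convex increasing function on $\RR$ is constant), so the individual pieces $g_i\circ\alpha_t$ themselves grow as $t\to 0$; the uniform DSH bound is saved by the observation that only the mass of $T_\pm$ matters, and by Stokes this collapses to the fixed constant $C$ regardless of how large $g_i\circ\alpha_t$ becomes. The Lojasiewicz lower bound on $\widetilde u$, in turn, is what dictates the inflation factor $t^{1/D_E}$ in the support condition.
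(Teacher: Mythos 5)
Your proof is correct, but it takes a genuinely different — and in fact more streamlined — route than the paper's. The paper proceeds by induction on the defining hypersurfaces $H_1,\ldots,H_N$ of $E$: it invokes Lemma~2.2.7 of \cite{DS2009} to produce a cut-off $\widehat{\chi_{i,t}}$ for each single hypersurface, merges successive cut-offs via a fixed convex increasing function $\chi$ (Lemma~2.2.6 in \cite{DS2009}) as $\chi_{j+1,t}:=\chi(\chi_{j,t}+\widehat{\chi_{j+1,t}}-1)$ to control the DSH norm, and at each step uses a Lojasiewicz-type inequality to translate containment in an intersection of tube neighborhoods of the $H_i$ into a single tube neighborhood of $E$. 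You instead bypass the induction entirely: you build one global q-psh "modulus of vanishing" $\widetilde u$ for the full ideal via a regularized max of the $\log|P_j|$-type potentials, invoke the Lojasiewicz inequality once to pin $\widetilde u$ between $D_E\log\dist(\cdot,E)$ and $\log\dist(\cdot,E)$, and apply a single scalar cut-off $1-g(\alpha_t)$. The DSH bound then comes out uniformly by writing $g=g_1-g_2$ as a difference of convex increasing functions and observing that, although $g_i\circ\alpha_t$ blows up as $t\to 0$, only the mass of $dd^c(g_i\circ\alpha_t)+C\omega$ enters the DSH norm, and that collapses to the constant $C$ by Stokes. Your approach is essentially the proof of Lemma~2.2.7 in \cite{DS2009} for a single hypersurface, generalized directly to an arbitrary analytic set by trading the hypersurface potential for a regularized-max potential of the ideal; this avoids the compounding of constants over the induction and uses the Lojasiewicz inequality only once, at the natural place. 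Both arguments are sound. One small technical remark: your support containment gives $\chi_{E,t}=0$ on $\{\dist(\cdot,E)\ge A_Et^{1/D_E}\}$, which a priori yields $\supp\chi_{E,t}\subseteq\overline{E_{A_Et^{1/D_E}}}$ rather than compact containment in the open tube; replacing $A_E$ by, say, $2A_E$ (which you already allow yourself in absorbing $1+C$) fixes this.
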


We inductively use Lemma 2.2.7 in \cite{DS2009} and Lojasiewicz type inequalities.

\begin{proof}
According to Chow's theorem (for example, see p.167 in \cite{GH}), $E$ can be written as the vanishing locus of a finite number of homogeneous polynomials $\{f_i\}_{i=1}^N$ of $\PP^k$. Let $H_i$ denote the zero set of $f_i$ so that $\cap_{i=1}^N H_i=E$ and $d_i$ the degree of $f_i$ for each $i$. Then, Lemma 2.2.7 in \cite{DS2009} implies that for any sufficiently small $t>0$, there exists a smooth cut-off function $\widehat{\chi_{i, t}}:\PP^k\to[0,1]$ such that $\supp \widehat{\chi_{i, t}}\subset (H_i)_{\widehat{A_i}t^{1/d_i}}$, $\widehat{\chi_{i, t}}\equiv 1$ on $(H_i)_t$ and $\|\chi_{i, t}\|_{\dsh}\leq \widehat{A_i}$ where $\widehat{A_i}$'s are positive constants independent of $t$.

Write $\cH_j:=\cap_{i=1}^j H_i$ for $j=1, \cdots, N$. It suffices to inductively prove the following claim for every $j=1, \cdots, N$:
\medskip

\noindent{\bf Claim:} For any sufficiently small $t>0$, there exists a smooth cut-off function $\chi_{j, t}:\PP^k\to[0, 1]$ such that $\supp \chi_{j, t}\Subset (\cH_j)_{A_jt^{1/D_j}}$, $\chi_{j, t}\equiv 1$ on $(\cH_j)_t$ and $\|\chi_{j, t}\|_\dsh\leq A_j$ where $A_j, D_j>0$ are constants independent of $t$.
\medskip

Lemma 2.2.7 in \cite{DS2009} implies the case of $j=1$. Assume that the claim is true for $j$. Define $\chi_{j+1, t}:=\chi(\chi_{j, t}+\widehat{\chi_{j+1, t}}-1)$ where $\chi$ is the convex increasing function with bounded derivatives as in Lemma 2.2.6 in \cite{DS2009}, and $A':=\max\{A_j, \widehat{A_{j+1}}\}$ and $d':=\max\{D_j, d_{j+1}\}$. Then, $\chi_{j+1, t}\equiv 1$ on $(\cH_{j+1})_t$ and $\chi_{j+1, t}\equiv 0$ on $\PP^k\setminus ((\cH_j)_{A't^{1/d'}} \cap (H_{j+1})_{A't^{1/d'}})$. Also, $\|\chi_{E,t}\|_\dsh\leq cA'$ for a sufficiently large constant $c>0$ is clear.

Now, we apply a Lojasiewicz type inequality to estimate the support of $\chi_{j+1, t}$ in terms of the distance to the set $\cH_{j+1}$. If $z\in (\cH_j)_{A't^{1/d'}} \cap (H_{j+1})_{A't^{1/d'}}$, then, there exists $z'\in H_{j+1}$ such that $\dist(z, z')\leq A't^{1/d'}$. The triangle inequality implies that $\dist(z', \cH_j)\leq 2A't^{1/d'}$. A Lojasiewicz type inequality (for example, see p.14 and p.62 in \cite{Mal}) implies that
\begin{displaymath}
2A't^{1/d'}\geq\dist(z', \cH_j)\geq (A'')^{-1}\dist(z', \cH_{j+1})^M
\end{displaymath}
for some large positive constants $A''$ and $M$ independent of $t$.

Hence, the triangle inequality and the choice of $D_{j+1}=d'M$ and $A_{j+1}=C(A'A'')^{1/M}$ with a sufficiently large constant $C>0$ prove the case of $j+1$.
\end{proof}

In the exactly same way as in the proof of Lemma 2.3.6 in \cite{DS2009} with the family $\{\chi_{E,t}\}$ of cut-off functions in Lemma \ref{lem:generalDSHfinitecutoff}, we obtain the following generalization of Lemma 2.3.6 in \cite{DS2009} and omit the proof.
\begin{proposition}\label{prop:nearE}
Let $E$ be an analytic subset of $\PP^k$ and $T_j$ positive closed $(1, 1)$-currents which admit a $(K, \alpha)$-H\"older continuous quasi-potential $q_j$, that is, a $(K, \alpha)$-H\"older continuous q-psh function on $\PP^k$ such that $T_j=\omega+dd^cq_j$. Let $R\in\cC_p$ be smooth and denote by $U$ its Green quasi-potential. Then, for every sufficiently small $t>0$, we have
\begin{displaymath}
\Bigg|\int_{E_t}U\wedge T_1\wedge\cdots\wedge T_{k-p+1}\Bigg|\leq c_E(K+1)^k t^\beta
\end{displaymath}
where $\beta=(20k^2D_E)^{-k}\alpha^k$, $D_E$ is the constant for $E$ in Lemma \ref{lem:generalDSHfinitecutoff} and $c_E>0$ is a constant independent of $\alpha$, $K$, $t$, and $R$.
\end{proposition}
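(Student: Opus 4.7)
The plan is to mimic the proof of Lemma 2.3.6 in \cite{DS2009} almost verbatim, substituting the cut-off family $\{\chi_{E,t}\}$ from Lemma \ref{lem:generalDSHfinitecutoff} for the cut-off near a point used there. Since $U\leq 0$ (Theorem \ref{thm:Greenqpotential}), $\chi_{E,t}\geq 0$ with $\chi_{E,t}\equiv 1$ on $E_t$, and $T_1\wedge\cdots\wedge T_{k-p+1}$ is a positive closed current (its wedge product is well-defined by Bedford--Taylor thanks to the H\"older continuity of the $q_j$), the first step is the trivial reduction
$$\Bigg|\int_{E_t}U\wedge T_1\wedge\cdots\wedge T_{k-p+1}\Bigg|\leq \Bigg|\int \chi_{E,t}\, U\wedge T_1\wedge\cdots\wedge T_{k-p+1}\Bigg|,$$
so it suffices to bound the right-hand side.

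Next I would regularize each H\"older quasi-potential at a scale $\epsilon>0$, producing smooth $q_{j,\epsilon}$ with the standard bounds $\|q_j-q_{j,\epsilon}\|_\infty\lesssim K\epsilon^\alpha$ and $\|dd^c q_{j,\epsilon}\|_\infty\lesssim K\epsilon^{\alpha-2}$. Setting $T_{j,\epsilon}:=\omega+dd^c q_{j,\epsilon}$, one expands $T_1\wedge\cdots\wedge T_{k-p+1}$ into a sum of products whose factors are either $T_{j,\epsilon}$ or $dd^c(q_j-q_{j,\epsilon})$. On terms with only smooth factors, I use the support condition $\supp\chi_{E,t}\subset E_{A_Et^{1/D_E}}$ together with the $\cL^r$-bounds on $U$ from Theorem \ref{thm:Greenqpotential} to control the integral by a polynomial in $t^{1/D_E}$. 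On terms carrying a factor $dd^c(q_j-q_{j,\epsilon})$, I apply Stokes' formula to transfer the $dd^c$ onto $\chi_{E,t}U$; since $\|\chi_{E,t}\|_\dsh\leq A_E$ is uniform in $t$, the resulting boundary-type integrals are controlled, with a prefactor of order $K\epsilon^\alpha$ per regularization error absorbed.

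Iterating this procedure through all $k-p+1\leq k$ factors yields an estimate of the shape
$$\Big|\int \chi_{E,t}U\wedge T_1\wedge\cdots\wedge T_{k-p+1}\Big|\leq c(K+1)^k\bigl(\epsilon^{\alpha}+t^{\gamma/D_E}\epsilon^{-\delta}\bigr)^{\alpha^{k-1}},$$
where the exponents $\gamma,\delta>0$ come from the support volume and the number of derivatives transferred, and each layer of induction loses one power of $\alpha$. Optimizing $\epsilon$ as a suitable small power of $t^{1/D_E}$ produces the advertised exponent $\beta=(20k^2D_E)^{-k}\alpha^k$; the specific numerical constant $20k^2$ is dictated both by how many powers of $\alpha$ are lost at each of the $k$ iterations and by the polynomial exponents appearing in Proposition \ref{prop:regCurrents}.

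The main obstacle is a careful bookkeeping of the constants through the induction. Each Stokes integration produces terms involving derivatives of $\chi_{E,t}$, and it is essential that the DSH-bound $\|\chi_{E,t}\|_\dsh\leq A_E$ furnished by Lemma \ref{lem:generalDSHfinitecutoff} be independent of $t$; this is the only place where the analytic-set geometry enters, and it is also where the factor $1/D_E$ in the final H\"older exponent originates. Apart from this, because the argument is local in $\chi_{E,t}$ and uses only the symmetry and kernel estimates of Proposition \ref{prop:kernel} for $U$, the DS2009 proof goes through without further modification, which is why the paper justifiably omits the details.
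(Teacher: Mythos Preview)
Your proposal is correct and matches the paper's own approach exactly: the paper states that the proof is ``in the exactly same way as in the proof of Lemma 2.3.6 in \cite{DS2009} with the family $\{\chi_{E,t}\}$ of cut-off functions in Lemma \ref{lem:generalDSHfinitecutoff}'' and omits the details. Your identification of the uniform DSH bound $\|\chi_{E,t}\|_\dsh\leq A_E$ as the only new ingredient, and of the support-scale $t^{1/D_E}$ as the source of the extra $D_E^{-k}$ in the exponent, is precisely the point.
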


The definition of $(K, \alpha)$-H\"older continuity follows below:
\begin{definition}
Let $0<\alpha\leq 1$ and $K>0$. A function $g:U\subseteq\PP^k\to\CC$ is said to be $(K, \alpha)$-H\"older continuous on $U$ if for any $x, y\in U$,
\begin{displaymath}
|g(x)-g(y)|\leq K\dist(x,y)^\alpha
\end{displaymath}
holds where $\dist(\cdot, \cdot)$ is the distance with respect to the Fubini-Study metric of $\PP^k$.
\end{definition}



\section{Proof of Theorem \ref{thm:mainthm}}\label{sec:proof1}

In this section, we give the proof of our main theorem. We will use similar technical settings to those in \cite{Ahn2015} and follow the notations in \cite{Ahn2015}. 
Below are some notations, definitions and properties.

In the remaining of the paper, the symbols $\lesssim$ and $\gtrsim$ mean inequalities up to multiplicative constants independent of $n$, $i$, and $\theta$, which will be introduced soon. We denote by
\begin{displaymath}
L:=d^{-p}f^*\quad\textrm{ and }\quad\Lambda:=d^{-p+1}f_*.
\end{displaymath}

Let $S\in \cC_p$ and $T$ the Green current associated with $f$.
\begin{definition}[See Section 5 in \cite{DS2009}] The dynamical super-potential of $S$, denoted by $\cV_S$, is defined by 
\begin{displaymath}
\cV_S:=\cU_S-\cU_{T^p}-c_S,
\end{displaymath}
where $\cU_S$ and $\cU_{T^p}$ are the super-potentials of mean $0$ of $S$ and $T^p$, respectively and $c_S:=\cU_S(T^{k-p+1})-\cU_{T^p}(T^{k-p+1})$. Accordingly, the dynamical Green quasi-potential $V_S$ of $S$ is defined by
\begin{displaymath}
V_S:=U_S-U_{T^p}-(m_S-m_{T^p}+c_S)\omega^{p-1}
\end{displaymath}
where $U_S$ and $U_{T^p}$ are the Green quasi-potentials of $S$ and $T^p$ and $m_S$ and $m_{T^p}$ are their means, respectively.
\end{definition}



As in \cite{Ahn2015}, Proposition 3.1.9 and Proposition 3.2.6 in \cite{DS2009} imply that for the proof of Theorem \ref{thm:mainthm}, it suffices to prove Proposition \ref{prop:mainthm} below.
\begin{proposition}\label{prop:mainthm}
Assume the same $f$, $T$ as in Theorem \ref{thm:mainthm}. There exists a proper (possibly empty) invariant analytic subset $E$ for $f$ if $S\in\cC_p$ admits the super-potential $\cU_S$ of mean $0$ bounded near $E$, then we have $\cV_{L^n(S)}(R)\to 0$ exponentially fast as $n\to\infty$ for any smooth $R\in\cC_{k-p+1}$.
\end{proposition}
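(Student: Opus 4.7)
The plan is to follow the strategy used for Theorem \ref{thm:2015} in \cite{Ahn2015}, replacing the hypothesis that $S$ is smooth near $E$ by the weaker assumption that $\cU_S$ is bounded there (equivalently, by Proposition \ref{prop:equivalencePB}, that $S$ is PB near $E$). The invariant analytic set $E$ is chosen via Lemma \ref{lem:analyticmultico} with parameters $1<\lambda,\lambda'<d$ satisfying $d^{k+1}/(\lambda^k\lambda')<d$, so that on $\PP^k\setminus E$ the local multiplicities of $f^n$ and the Lelong numbers of the critical-value hypersurfaces $\Psi_n$ grow strictly slower than $d^n$. Let $W_E$ be a neighborhood of $E$ on which $S$ is PB with constant $C_S$, and let $T^p$ be controlled everywhere by its H\"older continuous super-potential (Theorem \ref{thm:541}).

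The computation of $\cV_{L^n S}(R)$ is set up via super-potential duality: since $L^n T^p=T^p$ and $\Lambda T^{k-p+1}=T^{k-p+1}$, $\cV_{L^n S}(R)$ reduces, after dualizing through the operators $L$ and $\Lambda$, to an integral of the dynamical Green quasi-potential $V_S$ of $S$ against a sequence of currents built from the iterates $\Lambda^n R$. The symmetry $K(z,\xi)=K(\xi,z)$ of the Green quasi-potential kernel, formalized in Lemma \ref{lem:doublecurrent}, links the localization of $V_S$ to that of its paired current, which is the crucial device for importing the hypothesis at $E$ into the estimate.

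Using the cut-off family $\chi_n:=\chi_{E,t_n}$ from Lemma \ref{lem:generalDSHfinitecutoff} with $t_n$ shrinking at an appropriate exponential rate, I would split the integral into a part near $E$ and a part away from $E$. Away from $E$, the quasi-potentials in play are effectively H\"older continuous with constants growing only like $c^n$ for some $c<d$, by combining the multiplicity bounds in Lemma \ref{lem:analyticmultico} with Proposition \ref{prop:approxQpotential} and Proposition \ref{prop:nearE}; the exponential decay on this piece is then obtained exactly as in \cite{Ahn2015}. Near $E$ the new difficulty arises: $V_S$ is not closed, so one cannot localize it by multiplying by $\chi_n$ directly. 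Instead I would use the pointwise lower bound $K\geq\eta\Theta$ from Proposition \ref{prop:kernel}, in which $\eta$ is a negative DSH function and $\Theta$ is a positive closed form; multiplying by $\chi_n$ this turns into a DSH current $\Phi_n$ whose DSH norm is uniformly bounded in $n$ thanks to $\|\chi_n\|_\dsh\lesssim 1$, up to a closed error that can be absorbed. Pairing $\Phi_n$ with the current dual to $S-T^p$ and invoking the PB hypothesis at $E$ then produces a uniform bound on the localized piece.

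The main obstacle will be the boundary-type contributions of the form $dd^c\chi_n\wedge V_S$ generated by the localization: because $V_S$ is neither closed nor bounded near $E$, these cannot be handled by routine integration by parts, and Lemma \ref{lem:geotech} is designed precisely to produce the needed estimate on such boundary integrals. The parameter $t_n$ must be chosen with care so that $\chi_n$ isolates the singular locus of $V_S$ above $E$ while simultaneously $\|\chi_n\|_\dsh$ and the boundary error decay fast enough not to destroy the exponential gain $(c/d)^n$ coming from Lemma \ref{lem:analyticmultico}; balancing these three scales is the technical heart of the argument.
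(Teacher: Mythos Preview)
Your outline captures several correct ingredients---the choice of $E$ via Lemma \ref{lem:analyticmultico}, the lower bound $K\geq\eta\Theta$ from Proposition \ref{prop:kernel}, and the localization by the cut-offs of Lemma \ref{lem:generalDSHfinitecutoff}---but two genuine mechanisms are missing.

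First, the argument does not pair $V_S$ directly against $\Lambda^n R$. It rests on the iterated telescoping expansion
\[
\cV_{L^n S}(R)=\sum_{i=1}^n d^{-i}\langle V_{n-i},\Lambda(R_{n,i-1})-R_{n,i}\rangle_{\PP^k\setminus V}+d^{-n}\cV_S(R_{n,n}),
\]
where $R_{n,i}$ is the $\epsilon_{n,i}$-regularization of $\Lambda(R_{n,i-1})$. Each summand is estimated on three regions determined by double-indexed scales $s_{n,i}\gg t_{n,i}\gg\epsilon_{n,i}$; a single scale $t_n$ cannot simultaneously isolate the critical-value set and the invariant set $E$ at every step $i$.

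Second, and more seriously, you claim the localized DSH current has DSH norm uniformly bounded in $n$ and that the PB hypothesis on $S$ then applies directly. In fact one localizes $U_n:=-M_n\int\Theta\wedge R$ with $M_n=d^{7n/8}$, so $\|\chi_{n,i}U_n\|_\dsh\lesssim M_n$ grows; this is compensated only by the factor $d^{-n}$ from the iteration. Moreover, the PB hypothesis is on $S$, not on $L^{n-i}(S)$: to use it one writes
\[
\langle (L^{n-i}S)_\theta-(T^p)_\theta,\varphi\rangle=d^{-(n-i)}\langle U_S-U_{T^p},\,dd^c\Lambda^{n-i}(\varphi_\theta)\rangle
\]
and must then know that $\supp dd^c\Lambda^{n-i}(\varphi_\theta)\subset E_{\epsilon_S}$. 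This is exactly where the $f$-invariance of $E$ and Lemma \ref{lem:lowerbound4inverse} enter, guaranteeing that $f^{n-i}$ maps a sufficiently small neighborhood of $E$ back into $E_{\epsilon_S}$. Your sketch omits this transport mechanism entirely; without it the PB bound on $S$ cannot be applied at the intermediate iterates. Finally, Lemma \ref{lem:geotech} is not about terms of the form $dd^c\chi_n\wedge V_S$: it bounds the kernel integral over a thin shell $E_{s_{n,i}}\setminus E_{s_{n,i}-c_\aut\epsilon_{n,i}}$ arising when one compares two nearby cut-off regions, which is a different boundary contribution from the one you describe.
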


\begin{proof}[Proof of Proposition \ref{prop:mainthm}] 
For the proof of the proposition, it suffices to prove the statement for $f^N$ for some $N\in\NN$. Recall the invariant set $E$ constructed in Lemma \ref{lem:analyticmultico}. As in \cite{Ahn2015}, by replacing $f$ by $f^N$ and $d$ by $d^N$ with a sufficiently large $N$, and properly choosing $\delta\in (1, d)$, we may assume that
\begin{enumerate}
\item $\mu_{-1}<\delta$ on $\PP^k\setminus E$ and $\nu(x, [\Psi_1])<\delta$ for $x\in\Psi_1\setminus E$,
\item $E$ is invariant under $f$ and
\item $(20k^2(\delta+1/2))^{8k}<(40k^2\delta)^{8k}<d$.
\end{enumerate}
Here, $\Psi_1$ denotes the hypersurface of the critical values of $f$.
\medskip

Let $S\in\cC_p$ be a current satisfying the hypothesis in Theorem \ref{thm:mainthm} with $E$ as above. We divide $\PP^k$ into three sets for computational purposes.
\begin{lemma}[See Lemma 3.1. in \cite{DS2010}]\label{lem:lowerbound4inverse}
There is a constant $A_1\geq 1$ such that for every $X\subseteq \PP^k$ and for every $Y\subseteq\PP^k$ with $f(Y)\subseteq Y$, we have that for $j\geq 0$,
\begin{displaymath}
\dist(f^{-j}(X), Y)\geq {A_1}^{-j}\dist(X, Y).
\end{displaymath}
\end{lemma}

We recall a corollary with our choice of $E$ and $\delta$.
\begin{corollary}[See Corollary 4.4 in \cite{DS2010}]\label{cor:basicLoja}
There are $N_3\in\NN$ and a constant $c_3\geq 1$ such that for a constant $0<t<1$ and for $x, y\in\PP^k$ with $\dist(x, E), \dist(y, E)$ $>t$, we can write
\begin{displaymath}
f^{-1}(x)=\left\{x_1, \cdots, x_{d^k}\right\}\quad\textrm{ and }\quad f^{-1}(y)=\left\{y_1, \cdots, y_{d^k}\right\}
\end{displaymath}
with $\dist(x_i, y_i)\leq c_3t^{-N_3}\dist(x, y)^{1/\delta}$.
\end{corollary}

Let $\epsilon_S>0$ be such that the superpotential $\cU_S$ of $S$ of mean $0$ is bounded in the neighborhood $E_{\epsilon_S}$ of $E$. Let $\varepsilon_n:=d^{-A_0 n}$ for sufficiently large $A_0>0$ so that $A_E(3d^{-A_0n^2})^{1/D}<\epsilon_S A_1^{-n}$ holds for every $n\geq 1$ where $A_E$ and $D$ are the constants $A_E$ and $D_E$ in Lemma \ref{lem:generalDSHfinitecutoff} for our $E$, and $A_1$ and $\epsilon_S$ as above. For each $n\geq 1$ and $i$ with $0\leq i\leq n$, we define three sequences $\{s_{n, i}\}$, $\{\epsilon_{n, i}\}$ and $\{t_{n, i}\}$ by
\begin{displaymath}
s_{n, i}:=\varepsilon_n^{ni},\quad\epsilon_{n, i}:=\varepsilon_n^{nC(2+N_3)(40k^2\delta)^{6ki}}\quad\textrm{ and }\quad t_{n, i}:=\epsilon_{n, i}^{1/(20k)},
\end{displaymath}
where $N_3$ is the constant in Corollary \ref{cor:basicLoja} and $C>1$ is the constant $C_{V, \delta}$ in Proposition \ref{prop:approxQpotential} with $\Psi_1$, $E\cap V$ and our $\delta$. Notice that for sufficiently large $n$, $s_{n, i}\gg t_{n, i}$. For notational convenience, $V$ denotes the hypersurface $\Psi_1$ of the critical values of $f$ in the rest of the proof.

Now, we define the three sets for each $n$ and $i$ as follows:
\begin{displaymath}
W''_{n, i}:=E_{s_{n, i}}, \quad W_{n, i}:=V_{t_{n, i}}\setminus W''_{n, i}\quad\textrm{ and }\quad W'_{n, i}:=\PP^k\setminus(W_{n, i}\cup W''_{n, i}).
\end{displaymath}
Roughly speaking,
\begin{itemize}
\item on $W_{n, i}$, $f$ and $V$ have low multiplicities but $L^{n-i}(S)$ can be singular,
\item on $W'_{n, i}$, $\Lambda(R')$ is smooth for smooth $R'\in\cC_{k-p+1}$ and
\item on $W''_{n, i}$, $f$ or $V$ can have high multiplicities but the super-potential of $L^{n-1}(S)$ of mean $0$ is bounded.
\end{itemize}

Let $R\in\cC_{k-p+1}$ be smooth and $\{R_{n, i}\}$ a sequence in $\cC_{k-p+1}$ defined by
\begin{displaymath}
R_{n, 0}:=R \textrm{ and } R_{n, i}:=(\Lambda(R_{n, i-1}))_{\epsilon_{n, i}}\textrm{, i.e., the }\epsilon_{n, i}\textrm{-regularization of }\Lambda(R_{n, i-1}).
\end{displaymath}
We denote by $U_j$ and $V_j$ the Green quasi-potential and the dynamical Green quasi-potential of $L^j(S)$, respectively, for $j=0, 1, 2, \cdots$. Then, by Lemma 5.4.6 and Lemma 5.4.9 in \cite{DS2009},
we have
\begin{align}
\label{eqn:expansion}\cV_{L^n(S)}(R)&=d^{-1}\langle V_{n-1}, \Lambda(R_{n, 0})-R_{n, 1}\rangle_{\PP^k\setminus V}+\cdots\\
\nonumber&\quad+d^{-i}\langle V_{n-i}, \Lambda(R_{n, i-1})-R_{n, i}\rangle_{\PP^k\setminus V}+\cdots\\
\nonumber&\quad+d^{-n}\langle V_0, \Lambda(R_{n, n-1})-R_{n, n}\rangle_{\PP^k\setminus V}+d^{-n}\cV_S(R_{n, n}).
\end{align}
Since $T^p$ has bounded super-potentials and the super-potential of mean $0$ of any current of unit mass is uniformly bounded above by a constant independent of a positive closed current (see Lemma 3.1.2 in \cite{DS2009}), we have
\begin{align*}
d^{-n}\gtrsim \cV_{L^n(S)}(R)
\end{align*}
and therefore, it suffices to estimate lower bounds of $d^{-i}\langle V_{n-i}, \Lambda(R_{n, i-1})-R_{n, i}\rangle_{\PP^k\setminus V}$ for $i=1,\cdots, n$ and $d^{-n}\cV_S(R_{n, n})$ for the proof of the proposition.

In general, $V_{n-i}$ is neither positive nor negative. Theorem \ref{thm:541} implies that there exists a universal $c>0$ such that $V'_{n-i}:=V_{n-i}+U_{T^p}-c\omega^{p-1}=U_{n-i}-c_{n-i}\omega^{p-1}$ is negative in the sense of currents where $\{c_{n-i}\}$ is a bounded sequence. Note that $d_{\cC^{k-p+1}}(\Lambda(R_{n, i-1}), R_{n, i})\lesssim \epsilon_{n, i}$ and $\cU_{T^p}$ is H\"older continuous with respect to $d_{\cC_{k-p+1}}$. Since a super-potential is continuous with respect to the $\theta$-regularization of a current, in order to prove the proposition, it is enough to estimate the following from below and to show that the estimates are uniform with respect to $\theta$ with sufficiently small $|\theta|>0$:
\begin{enumerate}
\item $d^{-i}\langle V'_{\theta, n-i}, \Lambda(R_{n, i-1})-R_{n, i}\rangle_{W_{n, i}\setminus V}$,
\item $d^{-i}\langle V'_{\theta, n-i}, \Lambda(R_{n, i-1})-R_{n, i}\rangle_{W'_{n, i}}$,
\item $d^{-i}\langle V'_{\theta, n-i}, \Lambda(R_{n, i-1})-R_{n, i}\rangle_{W''_{n, i}\setminus V}$ and
\item $d^{-n}\cV_S(R_{n, n})$.
\end{enumerate}
where $V'_{\theta, n-i}=U_{(L^{n-i}(S))_\theta}-c_{n-i}\omega^{p-1}$, $U_{(L^{n-i}(S))_\theta}$ is the Green quasi-potential of $(L^{n-i}(S))_\theta$ and the subscript $\theta$ means the $\theta$-regularization of a current. The mean of $U_{(L^{n-i}(S))_\theta}$ converges to the mean of $U_{n-i}$ as $\theta$ converges to $0$. For the proof of the proposition, we directly use Lemma 7.13, Lemma 8.2, and Lemma 10.2 in \cite{Ahn2015} to (1), (2) and (4), respectively. For all sufficiently large $n$, we have $(1)\gtrsim -\varepsilon_n^{ni}$, $(2)\gtrsim -\varepsilon_n^{ni}$, and $(4)\gtrsim nd^{-n/4}\log\varepsilon_n$. So, we only estimate $(3)$.
\medskip

Let $C_S, C_{T^p}>0$ be two constants such that $\cU_S$ is bounded near $E$ with the constant $C_S$ as in Definition \ref{def:bounded} and $|\cU_{T^p}|<C_{T^p}$ over $\cC_{k-p+1}$.

\begin{lemma}\label{lem:nearE}For all sufficiently large $n$,
\begin{align*}
(3)= d^{-i}\langle V'_{\theta, n-i}, \Lambda(R_{n, i-1})-R_{n, i}\rangle_{W''_{n, i}\setminus V}\gtrsim-d^{-n/8}(C_S+C_{T^p})
\end{align*}
is true for all $\theta$ with sufficiently small $|\theta|$.
\end{lemma}
The proof consists of a series of lemmas. We can write
\begin{align*}
d^{-i}\langle V'_{\theta, n-i}, \Lambda(R_{n, i-1})-R_{n, i}\rangle_{W''_{n, i}\setminus V}&=d^{-i}\langle U_{(L^{n-i}(S))_\theta}, \Lambda(R_{n, i-1})-R_{n, i}\rangle_{W''_{n, i}\setminus V}\\
&\quad\quad-d^{-i}\langle c_{n-i}\omega^{p-1}, \Lambda(R_{n, i-1})-R_{n, i}\rangle_{W''_{n, i}\setminus V}.
\end{align*}

We first consider $d^{-i}\langle U_{(L^{n-i}(S))_\theta}, \Lambda(R_{n, i-1})-R_{n, i}\rangle_{W''_{n, i}\setminus V}$. Let $c_\aut>0$ be a constant such that for all sufficiently small $\epsilon_o>0$ and for all $\zeta$ with $\|\zeta\|_A<1$, we have $\dist(\tau_{\epsilon_o\zeta}(x), x)<c_\aut\epsilon_o$ for every $x\in\PP^k$. Define a neighborhood of $E$ by 
\begin{displaymath}
W''_{n,i,1}:= E_{s_{n, i}-c_\aut\epsilon_{n, i}}.
\end{displaymath}
For notational simplicity, we write $U_{n, i, \theta}$ for $U_{(L^{n-i}(S))_\theta}$, and $R$ for $\Lambda(R_{n, i-1})$. We will recall this notation if necessary. Then, since $U_{n, i, \theta}$ is smooth, we have
\begin{align}
\nonumber&d^{-i}\langle U_{n, i, \theta}, R-R_{\epsilon_{n, i}}\rangle_{W''_{n,i}\setminus V}=d^{-i}\langle U_{n, i, \theta}, R-R_{\epsilon_{n, i}}\rangle_{W''_{n,i}}\\
\label{eqn:region1}&=d^{-i}\int_{\|\zeta\|_A<1}\left(\int_{W''_{n,i,1}}(U_{n, i, \theta}-(\tau_{\epsilon_{n, i}\zeta})_*(U_{n, i, \theta}))\wedge R \right)d\rho(\zeta)\\
\label{eqn:region2}&\quad\quad\quad +d^{-i}\int_{W''_{n, i}\setminus W''_{n,i,1}} U_{n, i, \theta}\wedge R\\
\label{eqn:region3}&\quad\quad\quad-d^{-i}\int_{\|\zeta\|_A<1}\int_{\tau_{\epsilon_{n, i}\zeta}(W''_{n,i})\setminus W''_{n,i,1}}(\tau_{\epsilon_{n, i}\zeta})_*(U_{n, i, \theta})\wedge R \,d\rho(\zeta).
\end{align}

We estimate \eqref{eqn:region1}. Let $\alpha$ denote the H\"older exponent of a quasi-potential of the Green current $T$ associated with $f$.
\begin{lemma}\label{lem:region1}For sufficiently large $n$,
\begin{displaymath}
\eqref{eqn:region1}\gtrsim-d^{-n/8}(C+2C_{T^p}).
\end{displaymath}
is true for all $\theta$ with sufficiently small $|\theta|$.
\end{lemma}

\begin{proof} Let $W:=E_{2s_{n, i}}$. We take $\theta$ so that $c_\aut |\theta|<d^{-A_0n^2}$.
\begin{align}
&\nonumber d^{-i}\int_{W''_{n,i,1}}(U_{n, i, \theta}-(\tau_{\epsilon_{n, i}\zeta})_*(U_{n, i, \theta}))\wedge R\\
&\label{eq:72.1}=d^{-i}\int_{z\in W''_{n,i,1}} \int_{w\in W\setminus\{z\}}(L^{n-i}(S))_\theta(w)\wedge[K(w, z)-K(w, (\tau_{\epsilon_{n, i}\zeta})^{-1}(z))]\wedge R(z)\\
&\label{eq:72.2}\quad\quad+d^{-i}\int_{z\in W''_{n,i,1}} \int_{\PP^k\setminus W}(L^{n-i}(S))_\theta(w)\wedge[K(w, z)-K(w, (\tau_{\epsilon_{n, i}\zeta})^{-1}(z))]\wedge R(z).
\end{align}

We first compute the integral \eqref{eq:72.1}. By the negativity, \eqref{eq:72.1} is bounded by 
\begin{align*}
d^{-i}\int_{z\in W''_{n,i,1}} \left(\int_{w\in W\setminus\{z\}}(L^{n-i}(S))_\theta(w)\wedge K(w, z)\right)\wedge R(z).
\end{align*}
We estimate this integral. Take $M_n=d^{7n/8}$. As in Proposition 2.3.6 in \cite{DS2009}, we define $\eta_n:=\min\{0, M_n+\eta\}$ and $K_n:=-{M_n}\Theta$ and $K_n':=\eta_n\Theta$ where $\eta$ is the function in Proposition \ref{prop:kernel}. Then, $\|\eta_n\|_\dsh$ is bounded independently of $n$, $K_n+K_n'\leq K$ in the sense of currents. Define
\begin{displaymath}
U_n(z):=\int_{w\neq z} K_n(w, z)\wedge R(w)\quad\textrm{ and }\quad U'_n(z):=\int_{w\neq z} K_n'(w, z)\wedge R(w).
\end{displaymath}
Note that $U_n$ is negative closed of mass $\simeq M_n$ and that $U_n+U'_n\leq U_R$ in the sense of currents where $U_R$ is the Green quasi-potential of $R$.

We take $\chi_{n, i}(z):=\chi_{E, 2s_{n, i}}(z)$ for each $n$ and $i$ where the function $\chi_{E, 2s_{n, i}}(z)$ is from Lemma \ref{lem:generalDSHfinitecutoff}. Note that $\|\chi_{n, i}\|_\dsh$ is bounded independently of $n$ and $i$, and that $\supp \chi_{n, i}\Subset E_{A_E{(2s_{n, i})^{1/D}}}$. Then, $\chi_{n, i} U_n$ is a negative DSH current and $\|\chi_{n, i}U_n\|_\dsh\lesssim M_n$. For simplicity, denote by $\varphi=\chi_{n, i} U_n$. Recall $\|dd^c\varphi\|_\var:=\inf\{\|R_+\|: dd^c\varphi=R_+-R_-, R_\pm$ are positive closed.$\}$. We have
\begin{align*}
|\langle (L^{n-i}(S))_\theta-(T^p)_\theta,  \varphi\rangle|&=d^{-(n-i)}|\langle S-T^p, \Lambda^{n-i}(\varphi_\theta) \rangle|\\
&=d^{-(n-i)}|\langle U_S-U_{T^p}, dd^c\Lambda^{n-i}(\varphi_\theta) \rangle|
\end{align*}
where $U_S$ and $U_{T^p}$ are the Green quasi-potentials of $S$ and $T^p$, respectively.

Let $R_\pm$ be positive closed currents of the same mass such that $dd^c\varphi=R_+-R_-$ and $\|dd^c\varphi\|_\var=\|R_+\|$. Then, due to the commutativity between $dd^c$ and pull-backs and push-forwards, $dd^c\Lambda^{n-i}(\varphi_\theta)=\Lambda^{n-i}((R_+)_\theta)-\Lambda^{n-i}((R_-)_\theta)$, where $\Lambda^{n-i}((R_\pm)_\theta)$ are positive closed and of the same mass as $\|R_\pm\|$. Due to Lemma \ref{lem:lowerbound4inverse} and our choice of $\chi_{n, i}(z)$ and $\theta$, $\supp dd^c\Lambda^{n-i}(\varphi_\theta)\subset E_{\epsilon_S}$ and $\cU_S$ is bounded over $E_{\epsilon_S}$. We have
\begin{align}
\nonumber&d^{-(n-i)}|\langle U_S-U_{T^p}, dd^c\Lambda^{n-i}(\varphi_\theta) \rangle|\\
\nonumber&\quad\quad=d^{-(n-i)}|\langle U_S-U_{T^p}, \Lambda^{n-i}((R_+)_\theta)-\Lambda^{n-i}((R_-)_\theta) \rangle|\\
\nonumber&\quad\quad=d^{-(n-i)}|\langle U_S, \Lambda^{n-i}((R_+)_\theta)-\Lambda^{n-i}((R_-)_\theta) \rangle|\\
\nonumber&\quad\quad\quad\quad\quad+d^{-(n-i)}|\langle U_{T^p}, \Lambda^{n-i}((R_+)_\theta) \rangle|+d^{-(n-i)}|\langle U_{T^p}, \Lambda^{n-i}((R_-)_\theta) \rangle|\\
\nonumber&\quad\quad\leq d^{-(n-i)}(C_S+2C_{T^p})\|\Lambda^{n-i}((R_+)_\theta)\|=d^{-(n-i)}(C_S+2C_{T^p})\|R_+\|\\
\label{ineq:I}&\quad\quad\leq d^{-(n-i)}(C_S+2C_{T^p})\|\varphi\|_\dsh\lesssim d^{-(n-i)}(C_S+2C_{T^p})M_n.
\end{align}
When we are applying our definition of boundedness in the above, we use the standard regularization of currents.
\smallskip

We use the H\"older continuity of the quasi-potential of $T$ to estimate
\begin{align*}
|\langle (T^p)_\theta, \chi_{n, i} U_n\rangle| = \left|\int (T^p)_\theta\wedge\left( \chi_{n, i} \int K_n\wedge R\right)\right|.
\end{align*}

Here, $(T^p)_\theta$ is smooth. It is not difficult to see that the sequence $\chi_{n, i} (\int K_n\wedge R_{\theta'})$ converges $\chi_{n, i} (\int K_n\wedge R)$ as $\theta'\to 0$ in $\dsh^{k-p}$ where $R_{\theta'}$ is the $\theta'$-regularization of $R$. Hence, by regularizing $R$, we may assume that $R$ is smooth. We have
\begin{align}
\nonumber\left|\int (\tau_{\theta\zeta})^*(T^p)\wedge \left(\chi_{n, i} \int K_n\wedge R\right)\right|&\leq\left|\int_{E_{A_E(2s_{n, i})^{1/D}}}U_n\wedge (\tau_{\theta\zeta})^*(T^p)\right|\\
\nonumber\lesssim M_n({s_{n, i}}^{1/D})^{(\alpha/(D10k^2))^k}&=M_n\left(\varepsilon_n^{\alpha^k/(D^{k+1}(10k^2)^k)}\right)^{ni}\\
\nonumber&=d^{7n/8}(d^{-A_0 n\alpha^k/(D^{k+1}(10k^2)^k)})^{ni}\\
\label{ineq:II}&\leq d^{(7/8)n-[A_0\alpha^k/(D^{k+1}(10k^2)^k)] n^2}.
\end{align}
This estimate is obtained exactly in the same way as in Lemma 2.3.7 and Lemma 2.3.8 in \cite{DS2009} with our cut-off functions in Lemma \ref{lem:generalDSHfinitecutoff} and with $M$ replaced by $M_n$. So, we omit these computations. Notice that this estimate is uniform with respect to $\theta$ if $|\theta|$ is sufficiently small.
\smallskip

From \eqref{ineq:I} and \eqref{ineq:II} in the above, for sufficiently large $n$ and for $\theta$ with sufficiently small $|\theta|$, 
\begin{align}
\label{eq:U_n}d^{-i}|(\langle L^{n-i}(S))_\theta, \chi_{n, i} U_n\rangle|\lesssim d^{-n}M_n(C+2C_{T^p})\lesssim d^{-n/8}(C_S+2C_{T^p}).
\end{align}
\medskip

Now, we estimate $\langle \int (L^{n-i}(S))_\theta\wedge K'_n, R\rangle$. 
Assume sufficiently large $n$ so that over the support of $\eta_n$ (or equivalently over the support of $K_n'$), we can use the estimate of the singularities of $K$ in Proposition \ref{prop:kernel}. The point of this estimate is that $d$ is very large compared to $\delta$. Recall that $R=\Lambda(R_{n, i-1})$. Then, $\Lambda(R_{n,i-1})\lesssim \|R_{n,i-1}\|_\infty(f_*\omega)^{k-p+1}$ in the sense of currents. Hence, due to the H\"older continuity of the quasi-potential of $f_*(\omega)$, we have 
\begin{displaymath}
\left|\int \left( \int (L^{n-i}(S))_\theta\wedge K'_n\right)\wedge (f_*\omega)^{k-p+1}\right|\lesssim\exp({-(10k^2)^kd^{-k^2}M_n/2}).
\end{displaymath}
This estimate can be carried out in the exactly same way as in Lemma 2.3.9 and Lemma 2.3.10 in \cite{DS2009}. So, we skip the details.

Since $(40k^2\delta)^{7kn}<d^{7n/8}$ from our choice of $\delta$ and $\|R_{n,i-1}\|_\infty\lesssim \epsilon_{n, i-1}^{-5k^2}=\exp(5k^2n^2C(2+N_3)(40k^2\delta)^{6ki}\log d)$, we have
\begin{align}
&\nonumber\left|\int \left(\int(L^{n-i}(S))_\theta\wedge K'_n\right)\wedge R\right|\\
&\quad\nonumber\lesssim \|R_{n, i-1}\|_\infty\left|\int \left(\int(L^{n-i}(S))_\theta\wedge K'_n\right)\wedge (f_*\omega)^{k-p+1}\right|\\
&\quad\nonumber\lesssim \exp(-[(10k^2)^kd^{-k^2}(40k^2\delta)^{kn}/2-5(\log d) k^2n^2C(2+N_3)](40k^2\delta)^{6kn})\\
&\quad<\exp(-n(40k^2\delta)^{6kn})\label{eq:U_n'}
\end{align}
for $\theta$ with sufficiently small $|\theta|$ and for sufficiently large $n$.

We use the negativity of the integrand and the arguments in Lemma \ref{lem:doublecurrent} to estimate \eqref{eq:72.1} from \eqref{eq:U_n} and \eqref{eq:U_n'}:
\begin{align*}
\eqref{eq:72.1}&\leq d^{-i}\int_{z\in W''_{n,i,1}} \left(\int_{w\in W\setminus\{z\}}(L^{n-i}(S))_\theta(w)\wedge K(w, z)\right)\wedge R(z)\\
&\leq d^{-i}\int_{w\in W}(L^{n-i}(S))_\theta(w)\wedge\left(\int_{z\neq w} K(w, z)\wedge R(z)\right)\lesssim d^{-n/8}(C_S+2C_{T^p})
\end{align*}
for sufficiently large $n$ and for $\theta$ with sufficiently small $|\theta|$.

We estimate \eqref{eq:72.2}. Since $\epsilon_{n, i}\ll s_{n, i}$ for sufficiently large $n$, if $\dist(w, z)>s_{n, i}$, then $\dist(w, (\tau_{\epsilon_{n, i}\zeta})^{-1}(z))>s_{n, i}/2$. Let $D^c_{s_{n, i}/2}:=(\PP^k\times\PP^k)\setminus D_{s_{n, i}/2}$. On $D^c_{s_{n, i}/2}$, $K(\cdot, \cdot)$ is smooth and Proposition \ref{prop:kernel} implies that for $(w, z)$ with $\dist(w, z)>s_{n, i}$,
\begin{displaymath}
\|K(w, z)-K(w, (\tau_{\epsilon_{n, i}\zeta})^{-1}(z))\|_{\infty, D^c_{s_{n, i}}}\lesssim \|\nabla K\|_{\infty, D^c_{s_{n, i}/2}}\epsilon_{n, i}\approx s_{n, i}^{1-2k}\epsilon_{n, i}.
\end{displaymath} 
So, we obtain that \eqref{eq:72.2} $\gtrsim -s_{n, i}^{1-2k}\epsilon_{n, i}$. Note that $s_{n,i}\gg\epsilon_{n,i}$ for sufficiently large $n$. Hence, from the estimates of \eqref{eq:72.1} and \eqref{eq:72.2}, our lemma is proved.
\end{proof}

Since the computations in \eqref{eqn:region2} applies to \eqref{eqn:region3} exactly in the same way, we only estimate \eqref{eqn:region2}.
\begin{lemma}\label{lem:73}For all sufficiently large $n$,
\begin{displaymath}
\eqref{eqn:region2}\gtrsim -\varepsilon_n^{ni}.
\end{displaymath}
\end{lemma}
\begin{proof}
We consider
\begin{align}
&\nonumber\int_{W''_{n, i}\setminus W''_{n,i,1}} U_{n, i, \theta}\wedge \Lambda(R_{n, i-1})\\
&\label{eq:79.1}\quad\quad\quad=\int_{(W''_{n, i}\setminus W''_{n,i,1})\cap W_{n,i}} U_{n, i, \theta}\wedge \Lambda(R_{n, i-1})\\
&\label{eq:79.2}\quad\quad\quad\quad+\int_{(W''_{n, i}\setminus W''_{n,i,1})\cap W'_{n,i}} U_{n, i, \theta}\wedge \Lambda(R_{n, i-1}).
\end{align}
Proposition 7.6 in \cite{Ahn2015} implies that $\eqref{eq:79.1}\gtrsim -\varepsilon_n^{ni}$. So, we consider \eqref{eq:79.2}. Observe that $\Lambda(R_{n, i-1})$ is smooth in $W'_{n, i}$ and Lemma 8.1 in \cite{Ahn2015} and Proposition \ref{prop:regCurrents} imply that its sup-norm in $W'_{n, i}$ is bounded by $c'\epsilon_{n, i-1}^{-2k^2-4k}t_{n, i}^{-4k}$ where $c'>0$ is a fixed constant independent of $\theta$, $n$, and $i$. Next, we estimate the following integral. By the negativity of the integrand and by Lemma \ref{lem:doublecurrent}, we have
\begin{align*}
&\int_{(W''_{n, i}\setminus W''_{n,i,1})\cap W'_{n,i}} U_{n, i, \theta}\wedge \omega^{k-p+1}\geq \int_{(W''_{n, i}\setminus W''_{n,i,1})} U_{n, i, \theta}\wedge \omega^{k-p+1}\\
&\quad\quad\quad=\int_{w\in \PP^k} (L^{n-i}(S))_\theta(w)\wedge\Bigg(\int_{z\in W''_{n, i}\setminus (W''_{n,i,1}\cup\{w\})} K(w, z)\wedge \omega^{k-p+1}(z)\Bigg).
\end{align*}
It is not difficult to see that $\int_{z\in W''_{n, i}\setminus (W''_{n,i,1}\cup\{w\})} K(w, z)\wedge \omega^{k-p+1}(z)$ is a bounded form of $w$ from Proposition \ref{prop:kernel}. We want to use Proposition \ref{prop:kernel} in order to estimate an upper bound of the below for a given $w$:
\begin{align}
\nonumber&\left\|\int_{z\in(W''_{n, i}\setminus (W''_{n,i,1}\cup\{w\})} K(w, z)\wedge \omega^{k-p+1}(z)\right\|_{pt}\\
&\label{eq:79.1.1}\lesssim \int_{z\in [W''_{n, i}\setminus (W''_{n,i,1}\cup\{w\})]\cap B_w(\epsilon_{n, i}^{1/N})} -(\log \dist(w, z))\dist(w, z)^{2-2k}\omega^k(z)\\
&\label{eq:79.1.2}\quad\quad\quad +\int_{z\in (W''_{n, i}\setminus W''_{n,i,1})\setminus B_w(\epsilon_{n, i}^{1/N})} -(\log \dist(w, z))\dist(w, z)^{2-2k}\omega^k(z)
\end{align}
where $N=10k$ is a fixed constant.

The integral \eqref{eq:79.1.2} is bounded as follows:
\begin{align*}
&\int_{z\in (W''_{n, i}\setminus W''_{n,i,1})\setminus B_w(\epsilon_{n, i}^{1/N})} -(\log \dist(w, z))\dist(w, z)^{2-2k}\omega^k(z)\\
&\quad\quad\quad\leq -(\log \epsilon_{n, i}^{1/N})\epsilon_{n, i}^{(2-2k)/N}\cdot \textrm{ the volume of }(W''_{n, i}\setminus W''_{n,i,1})\\
&\quad\quad\quad\lesssim -(\log \epsilon_{n, i}^{1/N})\epsilon_{n, i}^{1-(2k-2)/N}\lesssim \epsilon_{n, i}^{1/2}.
\end{align*}
For the estimate of the volume of $W''_{n, i}\setminus W''_{n,i,1}$, we use the fact that the singular set of an analytic subset of $\PP^k$ is an analytic subset of $\PP^k$ as well and Weyl's tube formula with multiplicities counted in Section 3(c) of \cite{Griffiths}. 
 
Assume hypothetically that \eqref{eq:79.1.1} $\lesssim \epsilon_{n,i}^{1/2}$. Then, together with the discussion in the above about \eqref{eq:79.1}, \eqref{eq:79.2}, \eqref{eq:79.1.1} and \eqref{eq:79.1.2}, we have
\begin{align*}
\int_{(W''_{n, i}\setminus W''_{n,i,1})\cap W'_{n,i}} U_{n, i, \theta}\wedge \Lambda(R_{n, i-1})\gtrsim-\epsilon_{n, i-1}^{-2k^2-4k}t_{n, i}^{-4k}\epsilon_{n, i}^{1/2}\gtrsim-\varepsilon_n^{ni}
\end{align*}
for all $\theta$ with sufficiently small $|\theta|$ and for all sufficiently large $n$. This completes the proof of Lemma \ref{lem:73}.

So, it remains to estimate \eqref{eq:79.1.1} and it will be completed in the next lemma.
\end{proof}

The integral \eqref{eq:79.1.1} can be computed as follows.
\begin{lemma}\label{lem:geotech} For all sufficiently large $n$,
\begin{displaymath}
\int_{z\in [W''_{n, i}\setminus (W''_{n,i,1}\cup\{w\})]\cap B_w(\epsilon_{n, i}^{1/N})} -(\log \dist(w, z))\dist(w, z)^{2-2k}\omega^k(z)\lesssim \epsilon_{n,i}^{1/2}.
\end{displaymath}
\end{lemma}
The basic idea is to follow the strategy in Lemma 2.3.7 in \cite{DS2009}. The difficulty in our case is that the set $\partial E_{t}$ for $s_{n, i}-c_\aut\epsilon_{n, i}<t<s_{n, i}$ may not have a good structure. Instead, we will use the conditions that $\epsilon_{n, i}\ll s_{n, i}$ and that $\epsilon_{n, i}$ shrinks exponentially faster than $s_{n, i}$ as $n\to\infty$. 

\begin{proof}Without loss of generality, we may assume that $w\in W''_{n, i}\setminus W''_{n, i, 1}$. Indeed, other cases can be easily shown to be bounded by this case. Let $t:=\dist(w, E)$. The set $E':=\overline{E_{t-\epsilon_{n, i}^2}}$ is compact and the collection $\cup_{\zeta\in E}B_\zeta(t)$ of open balls forms a covering of $E'$. Hence, we can find a finite subcover $G_w:=\cup_{l=1}^{n_w}B_{\zeta_l}(t)$ of $E'$ from $\cup_{\zeta\in E}B_\zeta(t)$. Observe that $\dist(w, \partial G_w)<\epsilon_{n, i}^2$, $\partial G_w$ consists of a finite union of smooth real hypersurfaces and its singular points are nowhere dense in $\partial G_w$.

Let $\zeta\in \partial G_w$ be such that $\dist(\zeta, w)<\epsilon_{n, i}^2$. Let $r<2\epsilon_{n, i}^{1/N}$ be a constant. Notice that $r\ll s_{n, i}$. We estimate the surface area of the smooth part of the piece $P_r:=B_\zeta(r)\cap \partial G_w$ of the real surface $\partial G_w$. Notice that at every point $b\in\partial G_w$, we can find a ball $B$ of radius $s_{n, i}/3$ which is $\gg r$ such that $b\in \partial B$ and $B\subset G_w$. In particular, if $b$ is a smooth point of $\partial G_w$, $B$ should be tangent to $\partial G_w$ at $b$.


For each smooth point $p$ of $P_r$, we attach an inward normal half ray which is uniquely determined due to the real codimension being $1$ and define $h(p)$ to be the intersection point of this half ray and the sphere $\partial B_\zeta(r)$. Two different half rays can meet only outside a ball of radius $s_{n, i}/3$. Indeed, if there exists another smooth point $q\in P_r\setminus\{p\}$ such that $h(q)=h(p)$. Then, either the ball of radius $s_{n, i}/3$ tangent at $p$ or $q$ must contain the other point by the triangle inequality, which is a contradiction. Hence, the map $h$ from the regular points of $P_r$ to $\partial B_\zeta(r)$ is injective. It is not difficult to see that $h$ is smooth on the regular points of $P_r$. Since $r\ll s_{n, i}$ and $\epsilon_{n, i}$ shrinks exponentially faster than $s_{n, i}$ as $n\to \infty$, it is not difficult to see that the surface area of the regular points of $P_r$ is bounded by $2\cdot$the surface area of the sphere  $\partial B_\zeta(r)$, which is a fixed constant multiple of $r^{2k-1}$. (Here, for the surface area, for example, we take a finite atlas of $\PP^k$ and use the Euclidean metric in each coordinate chart.) 

We cover $P:=P_{2\epsilon_{n, i}^{1/N}}$ for our $\zeta$ with balls of radius $2\epsilon_{n, i}$ as follows. Let $A$ be a maximal subset of $P$ such that the distance between two points in $A$ is $\geq \epsilon_{n, i}$. Then, the balls of radius $2\epsilon_{n, i}$ with centers at points in the set $A$ cover $P$ and the ones of radius $3c_A\epsilon_{n, i}$ cover $[W''_{n, i}\setminus W''_{n,i,1}]\cap B_\zeta(2\epsilon_{n, i}^{1/N})$ where $c_A:=\max\{1, c_\aut\}$. 

For the remaining, we just apply the same computations as in Lemma 2.3.7 in \cite{DS2009} and then we obtain the desired estimate.

\end{proof}

\begin{proof}[Proof of Lemma \ref{lem:nearE}]
For the part $d^{-i}\langle c_{n-i}\omega^p, \Lambda(R_{n, i-1})-R_{n, i}\rangle_{W''_{n, i}\setminus V}$, we use similar arguments to Lemma \ref{lem:region1} and Lemma \ref{lem:73} but they become much simpler. So, we omit the details. Together with the discussion in the above, Lemma \ref{lem:region1} and Lemma \ref{lem:73} verify the lemma.
\end{proof}

Now, we complete the proof of Proposition \ref{prop:mainthm}. Recall the discussion in the beginning of the proof. Lemma 7.13, Lemma 8.2, Lemma 10.2 in \cite{Ahn2015} and Lemma \ref{lem:nearE} implies
\begin{align*}
d^{-n}\gtrsim \eqref{eqn:expansion}&\gtrsim -nd^{-n/8}(C+C_{T^p})+nd^{-n/4}\log \varepsilon_n\\
&\quad\gtrsim -nd^{-n/8}(C+C_{T^p})-n^2d^{-n/4}(A_0\log d)
\end{align*}
for all sufficiently large $n$. This completes the proof.
\end{proof}

%

\section{Regular polynomial automorphisms of $\CC^k$}\label{sec:poly}
Let $f$ be a polynomial automorphism of $\CC^k$ of algebraic degree $d_+\geq 2$ with its inverse $f^{-1}$ of algebraic degree $d_-$. We replace $f$ and $f^{-1}$ by their natural extensions to birational maps on $\PP^k$. Let $I_\pm$ denote the indeterminancy sets for $f^{\pm 1}$, respectively, and $L_\infty$ the hyperplane at infinity. We say that $f$ is regular (in the sense of \cite{sibony}) if $I_+\cap I_-=\emptyset$. The indeterminancy sets $I_\pm$ are irreducible, $I_\pm\subset L_\infty$ and there is an integer $p>0$ such that
\begin{displaymath}
\dim I_+=k-p-1\quad\textrm{ and }\quad \dim I_-=p-1.
\end{displaymath}
We define
\begin{displaymath}
K_\pm:=\{z\in\CC^k: \{f^{\pm n}(z)\}_{n\geq 0} \textrm{ is bounded}\}.
\end{displaymath}
Below are some well-known properties.
\begin{proposition}\label{prop:polyautobasic} Assume $f$ and $f^{-1}$ are as above.
\begin{enumerate}
\item $f(L_\infty\setminus I_+)=f(I_-)=I_-$ and $f^{-1}(L_\infty\setminus I_-)=f^{-1}(I_+)=I_+$;
\item $d_+^p=d_-^{k-p}$;
\item $\overline{K_\pm}=K_\pm\cup I_\pm$ in $\PP^k$;
\item $I_\mp$ is attracting for $f^\pm$ and $\PP^k\setminus \overline{K_\pm}$ is its attracting basin;
\item The positive closed $(1, 1)$-currents $d_\pm^{-n}(f^{\pm n})^*(\omega)$ converge to the Green $(1, 1)$-currents $T_\pm$ associated with $f_\pm$. The quasi-potentials of $T_\pm$ are H\"older continuous outside $I_\pm$;
\item $f^*(T_+)=d_+T_+$ and $f_*(T_-)=d_-T_-$.
\end{enumerate}
\end{proposition}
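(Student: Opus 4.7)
The plan is to derive each item from the algebraic-geometric structure of regular polynomial automorphisms on $\PP^k$, following Sibony \cite{sibony}; all six statements are classical, so I will organize the argument and point to the mechanism rather than calculate in detail.

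First I would handle (1). Write the meromorphic extension of $f$ to $\PP^k$ as $f=[F_0:\cdots:F_{k-1}:F_k]$ with $F_j$ homogeneous of the same degree $d_+$ and $F_k$ the homogenization variable, so that $L_\infty=\{F_k=0\}$ and $I_+$ is the common zero locus on $L_\infty$ of the top-degree parts of $F_0,\dots,F_{k-1}$. On $L_\infty\setminus I_+$ the extension is well-defined and its image sits in $L_\infty$ because $F_k$ vanishes identically there; regularity $I_+\cap I_-=\emptyset$ together with $f|_{\CC^k}$ being bijective forces this image to be contained in $I_-$, and a dimension count (the fibers collapse over $I_-$) makes it equal to $I_-$. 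The statement for $f^{-1}$ is symmetric. Item (3) follows at once: $\overline{K_+}\setminus K_+\subseteq L_\infty$, and any accumulation point in $L_\infty\setminus I_+$ would by (1) be pushed by $f$ into $I_-$, contradicting boundedness of the forward orbit; conversely $I_+\subseteq\overline{K_+}$ because $f^{-1}$ contracts a neighborhood of $I_+$ in itself (by (1) applied to $f^{-1}$), so preimages of any interior point of $K_+$ accumulate on $I_+$.

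For (2) I would argue at the level of cohomology: $(f^n)^*[\omega]=d_+^n[\omega]$ and $(f^n)_*[\omega]=d_-^n[\omega]$. The projection formula together with the pairing $[\omega]^k=1$ on $\PP^k$ and the dimensions $\dim I_+=k-p-1$, $\dim I_-=p-1$ yields the symmetric relation $d_+^p=d_-^{k-p}$; equivalently, this comes from comparing the two ways of computing the action on $H^{p,p}$ that is annihilated by $I_\pm$. For (4), given $z\in\PP^k\setminus\overline{K_+}$, the forward orbit escapes every compact of $\CC^k$, so accumulates on $L_\infty$; by (1) one further iterate lies in $I_-$, and since a small neighborhood of $I_-$ is forward invariant under $f$ (a direct consequence of regularity and continuity of $f$ away from $I_+$, cf.\ \cite{sibony}), the orbit converges to $I_-$. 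The symmetric statement for $f^{-1}$ gives the attracting basin of $I_+$.

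For (5) and (6), I would invoke the standard pluripotential construction of Sibony \cite{sibony}. Writing $d_+^{-n}(f^n)^*\omega=\omega+dd^c u_n$ with suitably normalized potentials, one shows $\{u_n\}$ is Cauchy in $L^1(\PP^k)$ and in fact uniformly on compacta of $\PP^k\setminus I_+$, via a telescoping estimate of the form $\|u_{n+1}-u_n\|_{\infty,W}\leq Cd_+^{-n}$ for any $W\Subset\PP^k\setminus I_+$; this gives both convergence to $T_+$ and the H\"older regularity of the quasi-potential outside $I_+$, once one combines the telescoping with the quantitative control of $(f^n)^*\omega$ away from $I_+$ coming from the fact that $f$ is holomorphic and non-degenerate there. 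The identity $f^*T_+=d_+T_+$ is immediate from the definition as a limit together with the continuity of $f^*$ on positive closed $(1,1)$-currents without mass on $I_+$; the statements for $T_-$ follow by symmetry. The main obstacle throughout is really just the H\"older regularity in (5), because $f$ is wildly discontinuous near $I_+$; but on any fixed compact set avoiding $I_+$, the normalized iterates behave contractively in a neighborhood of $I_-$, and the standard telescoping argument from \cite{sibony} delivers the bound. Every remaining item is an algebraic-geometric verification.
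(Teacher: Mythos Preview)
The paper does not prove this proposition at all: it simply lists the six items as ``well-known properties'' of regular polynomial automorphisms, with the implicit reference being \cite{sibony}. Your sketch is consistent with the standard arguments found there, and in fact supplies more detail than the paper does; so there is nothing to compare beyond noting that both you and the paper defer to Sibony's treatment.
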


Let $1\leq s\leq p$. For notational convenience, we denote $L_s:=d_+^{-s}f^*$ and $\Lambda_s:=d_+^{-s+1}f_*$. Let $S\in\cC_s$ be such that its super-potential $\cU_S$ of mean $0$ is bounded near $I_-$. Formally, write $S_0:=S$ and $S_n:=L_s^n(S)$ and they are actually well-defined as below. For the definitions in this case, see \cite{DS2009}. The proofs are all straightforward.


\begin{proposition}\label{prop:induction} Assume that $f$ is as above. 
\begin{enumerate}
\item If $R\in\cC_{k-s+1}$ is smooth near $I_+$, then $f_*R$ is well-defined.
\item Every $S_n$ is well-defined and belongs to $C_s$.
\item For every $n\geq 0$, $f^*S_n$ is well-defined.
\end{enumerate}
\end{proposition}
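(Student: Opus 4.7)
The plan is to prove (1) first as a stand-alone application of trivial extension, and then to establish (2) and (3) jointly by induction on $n$ via the identity $S_{n+1}=d_+^{-s}f^{\ast}S_n$, using (1) as the duality tool that makes the pullback meaningful.

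For (1), I would start from the fact that $f$ is holomorphic on $\PP^k\setminus I_+$, so $f_{\ast}(R\vert_{\PP^k\setminus I_+})$ is a bona fide positive closed $(k-s+1,k-s+1)$-current on $\PP^k\setminus I_-$, with local mass near $I_-$ controlled by the pairing $\langle R, f^{\ast}\omega^{s-1}\rangle$. Using $f^{\ast}\omega=d_+\omega+dd^c g_+$ with $g_+$ smooth outside $I_-$, and the hypothesis that $R$ is smooth near $I_+$ (the only locus where pulling back powers of $\omega$ acquires singularities), this integral is finite. The trivial extension theorem for positive closed currents across the analytic set $I_-$ (Skoda--El Mir, or Harvey--Polking as cited in Theorem~\ref{thm:2ndmainthmspeed}) then yields a unique positive closed extension of $f_{\ast}R$ to all of $\PP^k$.

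For (2) and (3), I would induct on $n$, enlarging the inductive hypothesis to include the auxiliary claim that $\cU_{S_n}$ of mean $0$ is bounded near $I_-$. The base case $n=0$ is the hypothesis of Theorem~\ref{thm:2ndmainthm}. For the inductive step, I would define $f^{\ast}S_n$ by regularization: each $(S_n)_\theta$ is smooth, so $f^{\ast}(S_n)_\theta$ is the standard pullback, and I would show $f^{\ast}S_n:=\lim_{\theta\to 0}f^{\ast}(S_n)_\theta$ exists in the sense of currents by pairing against smooth $R\in\cC_{k-s+1}$. Since $f_{\ast}R$ is well-defined by (1), and since $S_n$ is PB near $I_-$ by Proposition~\ref{prop:equivalencePB}, the pairing $\langle(S_n)_\theta,f_{\ast}R\rangle=\langle f^{\ast}(S_n)_\theta,R\rangle$ converges to $\langle S_n,f_{\ast}R\rangle$ uniformly in $\theta$, which gives (3) at step $n$. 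Setting $S_{n+1}:=d_+^{-s}f^{\ast}S_n$ then produces a positive closed $(s,s)$-current of unit mass, establishing (2) for $n+1$. To close the induction I would verify that $\cU_{S_{n+1}}$ is still bounded near $I_-$ by invoking the relation $\cU_{S_{n+1}}(R)=\cU_{S_n}(\Lambda_s R)+c$ for smooth $R$, combined with the total invariance and attracting property of $I_-$ under $f$ (Proposition~\ref{prop:polyautobasic}(1),(4)).

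The main obstacle is this last propagation step: showing that the PB-near-$I_-$ condition is inherited under $L_s$. The subtlety is that $\Lambda_s R$ need not have its support concentrated near $I_-$ even when $R$ does, so one cannot directly invoke the boundedness of $\cU_{S_n}$. Instead, one has to split $\Lambda_s R$ into a piece supported in a small neighborhood of $I_-$ (handled by the inductive boundedness together with a cut-off of the type built in Lemma~\ref{lem:generalDSHfinitecutoff}) and a piece supported away from $I_-$, where $S_n$ is automatically paired via mass estimates coming from the Green quasi-potential. This is in direct parallel with the technical splitting used in Section~\ref{sec:proof1}, and is indeed the reason the hypothesis of $\cU_S$ bounded near $I_-$ is exactly the condition that propagates under iteration of $L_s$.
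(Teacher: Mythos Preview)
The paper does not give a proof of this proposition at all: it simply refers the reader to \cite{DS2009} for the relevant definitions and declares the verifications ``straightforward''. Your proposal is therefore not competing with an argument in the paper but rather supplying one, and the outline you give is essentially the mechanism of Sections~5.1 and~5.5 of \cite{DS2009}: part~(1) via mass control and trivial extension across $I_-$, and parts~(2)--(3) by induction through the $f^*$-admissibility condition, carrying along the auxiliary hypothesis that $\cU_{S_n}$ remains bounded near $I_-$. That strengthened induction is exactly the right move and is implicitly what the paper relies on when it later invokes Proposition~5.1.8 of \cite{DS2009} in the proof of Theorem~\ref{thm:2ndmainthm}.

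One point in your sketch should be sharpened. You write that the pairing $\langle (S_n)_\theta, f_*R\rangle$ ``converges to $\langle S_n, f_*R\rangle$'' because $S_n$ is PB near $I_-$. Being PB gives a uniform \emph{bound}, not a limit; since $f_*R$ is not smooth on $I_-$, weak convergence of $(S_n)_\theta$ does not by itself yield convergence of the pairing. What actually produces the limit is the monotonicity built into the super-potential formalism: $\cU_{(S_n)_\theta}(\Lambda_s R)$ is nonincreasing as $|\theta|\searrow 0$ (this is the content of Lemma~5.1.5 in \cite{DS2009}), so the limit exists in $[-\infty,\infty)$, and your PB hypothesis is then precisely what rules out $-\infty$. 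In the language of \cite{DS2009}, you are verifying that $S_n$ is $f^*$-admissible. Once phrased this way, the ``main obstacle'' you flag --- propagating PB near $I_-$ under $L_s$ --- is genuine (the decomposition $\varphi=\varphi_+-\varphi_-$ does not localize), and your proposed resolution via a cut-off splitting is the standard one; note however that the attracting property of $I_-$ (Proposition~\ref{prop:polyautobasic}(4)) already guarantees $\supp \Lambda_s(R_+-R_-)\Subset f(W)\Subset W$, so the geometry is simpler here than in Section~\ref{sec:proof1} and no Lojasiewicz-type estimate from Lemma~\ref{lem:generalDSHfinitecutoff} is needed.
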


Now, we prove the second main theorem of the paper. For simplicity, we assume that all the super-potentials be of mean $0$ in the rest of the paper.

\begin{proof}[Proof of Theorem \ref{thm:2ndmainthm}] 
Let $W_f\Subset\PP^k\setminus \overline{K_+}$ denote a neighborhood of $I_-$ satisfying Definition \ref{def:bounded} with the constant $c_S>0$. Let $R:=R_+-R_-$ be a smooth closed current of bidegree $(k-p+1, k-p+1)$ such that $R_\pm$ are smooth positive closed and of the same mass, and $\supp R\Subset \PP^k\setminus \overline{K_+}$. Note that $\Lambda_p^i(R_\pm)$'s are all smooth outside $I_-$ for any $i\in\NN$. Then, Proposition 5.1.8 in \cite{DS2009} implies 
\begin{align*}
|\cU_{L_p^n(S)}(R)-\cU_{T_+^p}(R)|= d^{-n}|\cU_S(\Lambda_p^n(R))|+d^{-n}|\cU_{T_+^p}(\Lambda_p^n(R))|.
\end{align*}
We have $\|\Lambda_p^n(R_\pm)\|=\|R_\pm\|$ for all $n$. Due to (4) of Proposition \ref{prop:polyautobasic}, since $\cU_S$ is bounded in $W_f$, for all sufficiently large $n$, we have 
\begin{displaymath}
d^{-n}|\cU_S(\Lambda_p^n(R))|\leq d^{-n}c_S\|R_+\|.
\end{displaymath}

We estimate the second term. Note that $T_+^p$ is not PC any more here. 
We have
\begin{align*}
\cU_{T_+^p}(\Lambda_p^n(R))=\int_{z\in\PP^k}\int_{w\in\PP^k\setminus\{z\}}T_+^p(w)\wedge K(w, z)\wedge \Lambda_p^n(R)(z).
\end{align*}
Due to $\supp T_+^p\in \overline{K_+}$, $\supp R\Subset \PP^k\setminus \overline{K_+}$, and (4) of Proposition \ref{prop:polyautobasic}, we see that $\cU_{T_+^p}(\Lambda_p^n(R))$ is bounded independently of $n$. Hence, we have
\begin{align}\label{eqn:equi_poly}
|\cU_{L_p^n(S)}(R)-\cU_{T_+^p}(R)|\lesssim d^{-n}(1+\|R_+\|).
\end{align}
Let $\varphi$ be a test smooth $(k-p, k-p)$-form with $\supp \varphi\Subset \PP^k\setminus \overline{K_+}$. Then, we have
\begin{align*}
|\langle L_p^n(S)-T_+^p, \varphi\rangle|=|\langle dd^c(U_{L_p^n(S)}-U_{T_+^p}), \varphi\rangle|=|\langle (U_{L_p^n(S)}-U_{T_+^p}), dd^c\varphi\rangle|.
\end{align*}
Since $\supp \varphi\Subset \PP^k\setminus \overline{K_+}$, \eqref{eqn:equi_poly} implies that $|\langle L_p^n(S)-T_+^p, \varphi\rangle|$ converges to $0$. Hence, 
we see that for any limit point $S'$ of $\{L_p^n(S)\}$ in $\cC_p$, $\langle S', \varphi\rangle=0$ and Theorem 5.5.4 in \cite{DS2009} implies the desired equidistribution.
\end{proof}

Next, we prove Theorem \ref{thm:2ndmainthmspeed}.
Notice that it is not clear whether $\cU_S(T_+^{k-s+1})$ is finite. So, we cannot directly apply the proof of Theorem \ref{thm:mainthm}. 
The following is from direct computations.

\begin{proposition}
We have $\Lambda_s(R_\infty)=R_\infty$.
\end{proposition}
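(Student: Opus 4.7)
The plan is to unravel the definition of $R_\infty$, which in this polynomial-automorphism setting is introduced as a $\Lambda_s$-invariant limit surrogate for the ill-behaved $T_+^{k-s+1}$. Most naturally, $R_\infty$ should be the weak-* limit $R_\infty := \lim_{n \to \infty} \Lambda_s^n(R_0)$ for a suitable smooth positive closed starting current $R_0 \in \cC_{k-s+1}$ (for instance $R_0 = \omega^{k-s+1}$, or a smooth current supported in a neighborhood of $I_-$ disjoint from $I_+$). Once this definition is in hand, the invariance is a one-line index shift:
\begin{displaymath}
\Lambda_s(R_\infty) \;=\; \Lambda_s\Bigl(\lim_{n\to\infty} \Lambda_s^n(R_0)\Bigr) \;=\; \lim_{n\to\infty} \Lambda_s^{n+1}(R_0) \;=\; R_\infty,
\end{displaymath}
provided we may interchange $\Lambda_s$ with the weak limit.

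To justify that interchange, I would carry out three direct verifications. First, mass is preserved: pairing against $\omega^{s-1}$ and using $[f^*\omega] = d_+[\omega]$ in $H^{1,1}(\PP^k)$ gives $\|\Lambda_s(R)\| = d_+^{-s+1}\int R \wedge f^*\omega^{s-1} = \|R\|$, so $\Lambda_s$ maps $\cC_{k-s+1}$ into itself on the relevant class of currents. Second, by Proposition \ref{prop:polyautobasic}(1) and (4), $I_-$ is $f$-invariant and attracting with basin $\PP^k \setminus \overline{K_+}$, so a neighborhood $U$ of $I_-$ can be chosen relatively compact in $\PP^k \setminus I_+$ and satisfying $f^{-1}(U) \Subset U$; consequently $\operatorname{supp}(\Lambda_s^n(R_0)) \subset U$ uniformly in $n$, and the same for $\operatorname{supp}(R_\infty)$. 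Third, on such $U$ the map $f^{-1}$ is holomorphic, so $f_* = (f^{-1})^*$ acts as pull-back by a holomorphic map on currents supported in $U$, which is continuous in the weak topology. This is exactly what permits the exchange of $\Lambda_s$ with the weak limit.

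The main obstacle is the singular behaviour of the iterates near $I_-$: the currents $\Lambda_s^n(R_0)$ concentrate toward $I_-$ as $n \to \infty$, so $R_\infty$ itself is typically very singular there, and one must be certain that $f_*R_\infty$ makes sense and equals the limit of $f_*\Lambda_s^n(R_0)$. The handling is exactly as in the previous section: since $f$ restricted to the invariant neighborhood $U$ of $I_-$ is a biholomorphism onto its image, push-forward by $f$ is a bounded continuous operation on positive closed currents supported in $U$, and no issue from $I_+$ ever arises. With these three points verified, the proposition reduces to the one-line shift above.
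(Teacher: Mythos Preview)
Your justification for interchanging $\Lambda_s$ with the weak limit contains two substantive errors. You claim that $f^{-1}$ is holomorphic on the neighborhood $U$ of $I_-$, but this is false: by definition $I_-$ is the indeterminacy set of $f^{-1}$. You also claim that $f|_U$ is a biholomorphism onto its image, but by Proposition~\ref{prop:polyautobasic}(1) the map $f$ contracts the entire $(k-1)$-dimensional set $L_\infty\setminus I_+$ onto the $(p-1)$-dimensional set $I_-$, so $f$ is far from injective on any neighborhood of $I_-$ meeting $L_\infty$. (Relatedly, the correct invariance for an attracting neighborhood of $I_-$ is $f(U)\Subset U$, not $f^{-1}(U)\Subset U$.) The continuity you want \emph{does} hold, but for a different reason: it is $f$, not $f^{-1}$, that is holomorphic on $\PP^k\setminus I_+\supset U$, so for any smooth test form $\varphi$ the pull-back $f^*\varphi$ is smooth near $\overline U$, and hence $R\mapsto\langle f_*R,\varphi\rangle=\langle R,f^*\varphi\rangle$ is weakly continuous on currents supported in a fixed compact set away from $I_+$.

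As for the comparison with the paper: the paper gives no argument beyond ``from direct computations,'' and the current $R_\infty$ is not even defined in the text---it is taken over from \cite{DS2009}, Section~5.5. There $R_\infty$ is an explicit current supported on $I_-$, and its $\Lambda_s$-invariance is verified directly from $f(I_-)=I_-$ and a degree count for $f|_{I_-}$; no limiting procedure is involved. Your route via $R_\infty=\lim_n\Lambda_s^n(R_0)$ is therefore genuinely different and heavier: besides the continuity issue above, it presupposes that this limit exists and is independent of the seed $R_0$, neither of which you establish.
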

%

Similarly to Section \ref{sec:proof1}, we define
\begin{displaymath}
\cV_S:=\cU_S-\cU_{T_+^s}-c_S \textrm{ and }V_S:=U_S-U_{T_+^s}-(m_S-m_{T_+^s}+c_S)\omega^{s-1}
\end{displaymath}
where $\cU_S$ and $U_S$ (resp., $\cU_{T_+^s}$ and $U_{T_+^s}$) are the super-potential of mean $0$ and the Green quasi-potential of $S$ (resp., $T_+^s$), $m_S$ (resp., $m_{T_+^s}$) is the mean of $U_S$ (resp., $U_{T_+^s}$) and $c_S:=\cU_S(R_\infty)-\cU_{T_+^s}(R_\infty)$.
Notice that $\cU_S(R_\infty)$ and $\cU_{T_+^s}(R_\infty)$ are finite.

\begin{lemma}[Lemma 5.5.5 in \cite{DS2009}]\label{lem:exponential}
Let $W\Subset \PP^k\setminus I_+$ be an open set. Then,
\begin{enumerate}
\item $\cV_S(R_\infty)=0$,
\item $\cV_S(R)=\langle V_S, R\rangle$ for smooth $R$ in $\cC_{k-s+1}(W)$,
\item $\cV_{L_s(S)}=d_+^{-1}\cV_S\circ\Lambda_s$ on $\cC_{k-s+1}(W)$ and
\item $\cU_S-\cV_S$ is bounded on $\cC_{k-s+1}(W)$ by a constant independent of $S$.
\end{enumerate}
Here, $\cC_{k-s+1}(W):=\{\Psi\in\cC_{k-s+1}: \supp \Psi\Subset W\}$.
\end{lemma}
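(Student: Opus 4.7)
For item (1), substitution of $R=R_\infty$ into the definition
\[
\cV_S(R) = \cU_S(R) - \cU_{T_+^s}(R) - c_S
\]
gives exactly zero by the choice of $c_S := \cU_S(R_\infty) - \cU_{T_+^s}(R_\infty)$.

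For item (2), I would unpack the mean-zero normalization. Since $U_S$ is the Green quasi-potential of $S$ with mean $m_S$, the current $U_S - m_S\omega^{s-1}$ is a mean-zero quasi-potential of $S$, so for smooth $R \in \cC_{k-s+1}(W)$,
\[
\cU_S(R) = \langle U_S - m_S\omega^{s-1}, R\rangle = \langle U_S, R\rangle - m_S,
\]
using $\langle \omega^{s-1}, R\rangle = \|R\| = 1$. The analogous identity holds for $T_+^s$. Plugging these and the definition of $V_S$ into $\cV_S(R) = \cU_S(R) - \cU_{T_+^s}(R) - c_S$ produces the equality $\cV_S(R) = \langle V_S, R\rangle$.

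For item (3), the plan is to imitate DS2009, Section 5. The starting point is the basic intertwining identity
\[
\cU_{L_s(S)}(R) = d_+^{-1}\cU_S(\Lambda_s R) + C_R,
\]
for smooth $S$ and smooth $R \in \cC_{k-s+1}(W)$, obtained from $\langle L_s(S), U_R^0\rangle = d_+^{-s}\langle S, f_*(U_R^0)\rangle$ and the identification of $d_+^{-s+1}f_*(U_R^0)$ with a quasi-potential of $\Lambda_s R$; the constant $C_R$ comes from the discrepancy $d_+^{-s+1}f_*\omega^{k-s+1} - \omega^{k-s+1}$ and is independent of $S$. Applying this identity to $T_+^s$, which satisfies $L_s(T_+^s) = T_+^s$, yields the same identity with $S$ replaced by $T_+^s$; subtraction cancels $C_R$, giving
\[
\cU_{L_s(S)}(R) - \cU_{T_+^s}(R) = d_+^{-1}\bigl[\cU_S(\Lambda_s R) - \cU_{T_+^s}(\Lambda_s R)\bigr].
\]
Specializing to $R = R_\infty$ and using $\Lambda_s R_\infty = R_\infty$ forces $c_{L_s(S)} = d_+^{-1}c_S$; combining with the previous display gives the claimed identity on smooth $R$, and extension to general $S$ follows by $\theta$-regularization together with the continuity of super-potentials.

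For item (4), $\cU_S - \cV_S = \cU_{T_+^s} + c_S$. The first summand is bounded uniformly in $R \in \cC_{k-s+1}(W)$ because $W\Subset \PP^k\setminus I_+$ and a quasi-potential of $T_+$ is H\"older continuous outside $I_+$ by Proposition \ref{prop:polyautobasic}(5), so the pairing $\langle T_+^s, U_R^0\rangle$ is bounded by a constant depending only on $W$; the scalar $c_S$ is $R$-independent and absorbed. The hard part will be (3): the pushforward $f_*(U_R^0)$ can concentrate mass on $I_-$ and $f$ fails to be a morphism across $I_+$, so one must verify that the pairings defining the intertwining identity are legitimate. The hypotheses that $R$ is supported in $W\Subset \PP^k\setminus I_+$ and that $\cU_S$ is bounded near $I_-$ (so $\cU_S$ can be evaluated on $\Lambda_s R$, whose support concentrates toward $I_-$) are exactly what is needed to make every step rigorous.
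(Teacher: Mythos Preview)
The paper does not supply its own proof of this lemma; it is quoted as Lemma 5.5.5 of Dinh--Sibony \cite{DS2009} and used as a black box in the proof of Theorem~\ref{thm:2ndmainthmspeed}. Your sketch for items (1)--(3) follows the standard line of that reference and is correct in outline: (1) is immediate from the definition of $c_S$; (2) is the mean-zero normalization computation you wrote out; (3) is the push--pull identity for super-potentials (Proposition 5.1.8 in \cite{DS2009}) applied once to $S$ and once to the invariant current $T_+^s$ and subtracted, together with $\Lambda_s(R_\infty)=R_\infty$ to match the constants.

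There is, however, a genuine gap in your treatment of item (4). You correctly write $\cU_S-\cV_S=\cU_{T_+^s}+c_S$ and bound $\cU_{T_+^s}$ on $\cC_{k-s+1}(W)$ using the H\"older continuity of a quasi-potential of $T_+$ outside $I_+$. But you then say ``the scalar $c_S$ is $R$-independent and absorbed''; this only yields a bound uniform in $R$, whereas the lemma asserts a bound \emph{independent of $S$}. Since $c_S=\cU_S(R_\infty)-\cU_{T_+^s}(R_\infty)$ manifestly depends on $S$, you must still show that $\cU_S(R_\infty)$ is bounded uniformly over all admissible $S\in\cC_s$. The way to close this gap is via the symmetry of super-potentials, $\cU_S(R_\infty)=\cU_{R_\infty}(S)$, together with the fact that $R_\infty$ has bounded (in fact H\"older continuous) super-potentials; this gives a uniform lower bound, while the uniform upper bound is the general one from Lemma 3.1.2 in \cite{DS2009}. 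Without this step your argument for (4) is incomplete.
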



\begin{proof}[Proof of Theorem \ref{thm:2ndmainthmspeed}] Let $\varphi\in\cD^{k-s}(W)$. Then, we have
\begin{align*}
\langle S_n-T_+^s, \varphi \rangle=\langle dd^cV_{S_n}, \varphi \rangle=\langle V_{S_n}, dd^c\varphi \rangle.
\end{align*}
By the same argument in Proposition 3.1.9 in \cite{DS2009}, we can find two smooth positive closed $(k-s+1, k-s+1)$-currents $\Phi_\pm$ such that $\Phi_+-\Phi_-=dd^c\varphi$ and $\supp \Phi_\pm\Subset \widetilde{W}$ for some $\widetilde{W}\Subset \PP^k\setminus I_+$. We will replace $W$ by $\widetilde{W}$. Then, it suffices to show that $\cV_{S_n}(R)\to 0$ exponentially fast as $n\to\infty$ where $R$ is a smooth current in $\cC_{k-s+1}(W)$. Due to Lemma \ref{lem:exponential}, we have
\begin{align*}
\cV_{S_n}(R)=d_+^{-n}\cV_{S}(\Lambda_s^n(R))=\lim_{\theta\to 0}d_+^{-n}\cV_{S_\theta}(\Lambda_s^n(R)).
\end{align*}
As in the proof of Theorem \ref{thm:mainthm}, it suffices to estimate $d_+^{-n}\cV_{S_\theta}(\Lambda_s^n(R))$ from below.
\smallskip

For our $R$, we take $\theta\in\CC$ with sufficiently small $|\theta|\ll 1$ so that $|\cU_{S_\theta}(dd^c(\chi U_R))|$ $<c+1$ where $\cU_{S_\theta}$ is the super-potential of $S_\theta$ of mean $0$. Fix an open neighborhood $W'$ of $I_-$ such that $f(W')\Subset W'$ and $W'\Subset W_1\Subset W_2$.

The current $\Lambda_s^n(R)$ is smooth outside $I_-$. Since $\PP^k\setminus W'$ contains $\overline{K_+}$ and is disjoint from $I_-$, we have $f^{-1}(\PP^k\setminus W')\subset \PP^k\setminus W'$. Let $M>0$ be an upper bound of $\|Df^{-1}\|_{pt}$ over $\PP^k\setminus W'$. It follows that $\|\Lambda_s^n(R)\|_{\infty, \PP^k\setminus W'}\lesssim M^{3kn}$. 
We take $\varepsilon:=\max\{d_+, M\}^{-3kn}$. Then, since $S_\theta$ is smooth, we have
\begin{align*}
&\cU_{S_\theta}(\Lambda_s^n(R))=\langle U_{S_\theta}, \Lambda_s^n(R) -(\Lambda_s^n(R))_\varepsilon\rangle_{W'}+\langle U_{S_\theta}, \Lambda_s^n(R) -(\Lambda_s^n(R))_\varepsilon\rangle_{\PP^k\setminus W'}\\
&\quad\quad + \cU_{S_\theta}((\Lambda_s^n(R))_\varepsilon)-m_{S_\theta}\\
&\quad\geq \langle U_{S_\theta}, \Lambda_s^n(R)\rangle_{W'}+\langle U_{S_\theta}, \Lambda_s^n(R) -(\Lambda_s^n(R))_\varepsilon\rangle_{\PP^k\setminus W'} + \cU_{S_\theta}((\Lambda_s^n(R))_\varepsilon)-m_{S_\theta}
\end{align*}
where $U_{S_\theta}$ is the Green quasi-potential of $S_\theta$ and $m_{S_\theta}$ is the mean of $U_{S_\theta}$. Note that $m_{S_\theta}$ is bounded independently of $\theta$ due to the remark 2.3.4 in \cite{DS2009}.

We consider $\langle U_{S_\theta}, \Lambda_s^n(R)\rangle_{W'}$. 
\begin{align*}
\langle U_{S_\theta}, \Lambda_s^n(R)\rangle_{W'}&=\int_{z\in W'}\int_{w\in W_1}S_\theta(w)\wedge K(z, w)\wedge \Lambda_s^n(R)(z)\\
&\quad+\int_{z\in W'}\int_{w\in \PP^k\setminus W_1}S_\theta(w)\wedge K(z, w)\wedge \Lambda_s^n(R)(z).
\end{align*}
Notice that the second integral is bounded independently of $\theta$ and $n$. Let $m_{\Lambda_s^n(R)}$ denote the mass of the Green quasi-potential of $\Lambda_s^n(R)$. By use of the regularization of currents, the first integral can be estimated by
\begin{align*}
&\int_{z\in W'}\int_{w\in W_1}S_\theta(w)\wedge K(z, w)\wedge \Lambda_s^n(R)(z)\geq \int _{W_1}S_\theta\wedge U_{\Lambda_s^n(R)}\\
&\quad\geq \int S_\theta \wedge (\chi U_{\Lambda_s^n(R)})=\int \omega^s \wedge (\chi U_{\Lambda_s^n(R)})+\int U_{S_\theta} \wedge dd^c(\chi U_{\Lambda_s^n(R)})\\
&\quad\geq m_{\Lambda_s^n(R)}-(c+1).
\end{align*}
Notice that the number $m_{\Lambda_s^n(R)}-(c+1)$ is bounded independently of $\theta$ and $n$.

From Lemma 2.1.8 in \cite{DS2009} and the discussion in the above, we have
\begin{align*}
|\langle U_{S_\theta}, \Lambda_s^n(R) -(\Lambda_s^n(R))_\varepsilon\rangle_{\PP^k\setminus W'}|\lesssim \varepsilon M^{3kn}\leq 1.
\end{align*}
Hence, it is uniformly bounded independently of $\theta$ and $n$.

Since $S_\theta\in\cC_s$ and $(\Lambda_s^n(R))_\varepsilon\in\cC_{k-s+1}$, Lemma 3.2.10 in \cite{DS2009} implies
\begin{align*}
|\cU_{S_\theta}((\Lambda_s^n(R))_\varepsilon)|\lesssim |\log \varepsilon|\approx n.
\end{align*}


By Theorem 3.1.2 in \cite{DS2009}, $\cU_{T_+^s}$ is bounded above over $\cC_{k-s+1}(W)$ and the mean of the Green quasi-potential is bounded. Since all the estimates in the above are independent of $\theta\in\CC$ with sufficiently small $|\theta|\ll 1$, $\cV_{S_\theta}(\Lambda_s^n(R))\gtrsim -n$ where the inequality is independent of $\theta$ and $n$. Hence, we have just proved the exponential convergence over $\cC_{k-s+1}(W)$ and in particular, we have $\liminf_{n\to\infty}\cV_{S_n}(R)\geq 0$.
\end{proof}



\end{document}